\def\NZQ{\mathbb}               
\def\NN{{\NZQ N}}
\def\QQ{{\NZQ Q}}
\def\ZZ{{\NZQ Z}}
\def\RR{{\NZQ R}}
\def\PP{{\NZQ P}}
\newtheorem{Theorem}{Theorem}[section]
\newtheorem{Lemma}[Theorem]{Lemma}
\newtheorem{Corollary}[Theorem]{Corollary}
\newtheorem{Proposition}[Theorem]{Proposition}
\newtheorem{Remark}[Theorem]{Remark}
\newtheorem{Example}[Theorem]{Example}
\let\epsilon\varepsilon
\let\phi=\varphi
\let\kappa=\varkappa
\begin{document}

\title{Analytic spread of filtrations on two dimensional normal local rings}
\author{Steven Dale Cutkosky}
\thanks{Partially supported by NSF grant DMS-2054394}

\address{Steven Dale Cutkosky, Department of Mathematics,
University of Missouri, Columbia, MO 65211, USA}
\email{cutkoskys@missouri.edu}


\begin{abstract} 
In this paper we prove that a classical theorem by McAdam about the  analytic spread of an ideal in a Noetherian local ring continues to be true for divisorial filtrations on a two dimensional normal excellent local ring $R$, and that the Hilbert polynomial of the fiber cone of a divisorial filtration on $R$ has a  Hilbert function which is  the sum of a linear polynomial and a bounded function. We prove these theorems by first studying asymptotic properties of divisors on a  resolution of singularities of the spectrum of $R$. The filtration of the symbolic powers of an ideal is an example of a divisorial filtration. 
Divisorial filtrations are often not Noetherian, giving a significant difference in the classical case of filtrations of powers of ideals and divisorial filtrations. 
\end{abstract}

\maketitle \section{Introduction} Divisorial filtrations on two dimensional normal excellent local rings have excellent properties, as we show in this article.

\subsection{Filtrations of powers of ideals and Analytic Spread} In this subsection we give an outline of how    the classical theory of the analytic spread of an ideal admits a simple  geometric interpretation  in the case of  an ideal in a normal excellent local ring. The generalization of analytic spread to divisorial filtrations can then be seen as a natural extension of this theory. 

Expositions of the theory of complete ideals, integral closure of ideals and their relation to valuation ideals, Rees valuations, analytic spread and birational morphisms can be found, from different perspectives,  in  \cite{ZS2}, \cite{HS}, \cite{Li2} and \cite{Li3}. The book \cite{HS} and the article \cite{Li3} contain references to  original work in this subject. Concepts in this introduction which are not defined in this section or in these references can be found in Section \ref{SecRes} of this paper. A survey of recent work on symbolic algebras is given in \cite{DDGHN}. A different notion of analytic spread for families of ideals is given in \cite{DM}.
A recent paper exploring ideal theory in two dimensional normal local domains using geometric methods is \cite{OWY}.

Let $R$ be a normal excellent  local ring with maximal ideal $m_R$ and $I$ be an ideal in $R$. Let $\pi:X\rightarrow \mbox{Spec}(R)$ be projective and birational (so that $\pi$ is the blow up of an ideal) and such that $X$ is normal and $I\mathcal O_X$ is an invertible sheaf. Let $I\mathcal O_X=\mathcal O_X(-D)$ where $D$ is an effective and anti-nef divisor (the intersection product $(D\cdot E)\le 0$ for all exceptional curves $E$ of $X$). Then $\Gamma(X,\mathcal O_X(-nD))=\overline{I^n}$, the integral closure of $I^n$,  for all $n\in \NN$. Write $D=a_1F_1+\cdots+a_sF_s$ where the $F_i$ are prime divisors.
The local rings $\mathcal O_{X,F_i}$ are discrete (rank 1) valuation rings. Let $\nu_{F_i}$ be the associated valuations. 
 We have that the integral closure of $I^n$ is
$$
\overline{I^n}=\Gamma(X,\mathcal O_X(-nD))=I(\nu_{F_1})_{na_1}\cap \cdots\cap I(\nu_{F_s})_{na_s}
$$
where 
$$
I(\nu_{F_i})_b=\{f\in R\mid \nu_{F_i}(f)\ge b\}
$$
are the valuation ideals in $R$ associated to $\nu_{F_i}$. The center of $\nu_{F_i}$ on $R$ is the prime ideal $I(\nu_{F_i})_1$.
The Rees valuations of $I$ are those $\nu_{F_j}$ such that
$\overline{I^n}\neq \cap_{i\ne j}I(\nu_{F_i})_{na_i}$.  Let $Y$ be the normalization of the blow up $B(I)$ of $I$ and let $I\mathcal O_Y=\mathcal O_Y(-B)$. Then $Y\rightarrow \mbox{Spec}(R)$ is projective (since $R$ is universally Nagata). The divisor  $-B$ is ample on $Y$ and so the Rees valuations  of $I$ are exactly the  prime components of $B$. 
By the universal property of blowing up, $\pi$ factors through $B(I)$ and since $X$ is normal,
$\pi$ factors through $Y$. Let $\phi:X\rightarrow Y$ be the induced morphism. Let $F$ be a prime component of $D$, with associated valuation $\nu_F$. Then $\nu_F$ is a Rees valuation of $I$ if and only if $\phi$ does not contract $F$, in which case   $\phi(F)=E$ is a prime component of $B$ and we have that $\mathcal O_{X,F}=\mathcal O_{Y,E}$.  

In the case that $\dim R=2$,  the prime divisor $F$ is contracted by $\phi$ if and only if $F$ is exceptional ($\pi(F)=m_R$) and $(D\cdot F)=0$. Thus the Rees valuations of $I$ are precisely the valuations associated to prime divisors $F$ of $X$ such that either $\nu_F$ has center a height one prime of $R$ or $F$ is exceptional for $\pi$ (the center of $\nu_F$ on $R$ is $m_R$) and  $(D\cdot F)<0$.

Let us return to not having any restrictions on the dimension of $R$. We have an associated graded ring
$R[It]=\sum_{n\ge 0}I^nt^n$ (the Rees algebra of $I$). The integral closure of $R[It]$ in $R[t]$ is the graded algebra
$\overline{R[It]}=\sum_{n\ge 0}\overline{I^n}t^n$, which is a finite extension of $R[It]$ (since $R$ is universally Nagata). 
The blow up of $I$ is $B(I)=\mbox{Proj}(R[It])$ and $Y=\mbox{Proj}(\overline{R[It]})$ is the normalization of the blow up of $I$, which was introduced earlier.  Let $\psi:B(I)\rightarrow \mbox{Spec}(R)$ be the projection.

The blowup $B(I)$ has the important subschemes 
$$
\psi^{-1}(V(I))=\mbox{Proj}(\mbox{gr}_I(R))\mbox{ and }\psi^{-1}(m_R)=\mbox{Proj}(R[It]/m_RR[It]).
$$
 The $R$-algebra $\mbox{gr}_I(R)=\sum_{n\ge0}I^n/I^{n+1}t^n$  is the associated graded ring of $I$ and the $R$-algebra $R[It]/m_RR[It]$ is  the fiber cone of $I$.

Since $\mbox{Proj}(R[It])\rightarrow \mbox{Spec}(R)$ and $\mbox{Proj}(\overline{R[It]})\rightarrow \mbox{Spec}(R)$
are birational, the dimensions of $\mbox{Proj}(R[It])$ and $\mbox{Proj}(\overline{R[It]})$ are the same as the dimension of $R$.
Further, since $\mbox{Proj}(\mbox{gr}_I(R))$ is a Cartier divisor on $\mbox{Proj}(R[It])$, we have that $\dim (\mbox{Proj}(\mbox{gr}_I(R))=\dim R-1$.  Now, since $I\subset m_R$, we have that $\mbox{Proj}(R[It]/m_RR[It])$ is a subscheme of 
$\mbox{Proj}(\mbox{gr}_I(R))$, so we have $\dim(\mbox{Proj}(R[It]/m_RR[It]))\le \dim R-1$.

Let $\psi_0:\mbox{Proj}({\rm gr}_I(R))\rightarrow \mbox{Spec}(R/I)$ be the projective morphism induced by $\psi$. Let $P$ be a minimum prime of $I$. Then $\dim \psi_0^{-1}(P)=\dim R_P-1$ since $I_p$ is primary for the maximal ideal of $R_P$. We have that  $\dim \psi^{-1}(m_R)=\dim\psi_0^{-1}(m_R)\ge \dim \psi_0^{-1}(P)$ by upper semi-continuity of fiber dimension (\cite[Corollary IV.13.1.5]{EGA28}). Thus 
$$
\mbox{ht}(I)\le \dim \psi^{-1}(m_R)+1.
$$

The analytic spread of $I$ is defined to be 
$$
\ell(I)=\dim R[It]/m_RR[It].
$$ 

Since the dimension of the Proj of a graded ring is one less than the dimension of the ring, we have established in our case of normal excellent local rings the following theorems.

\begin{Theorem}\label{TheoremC1}(\cite[Proposition 5.1.6 and Corollary 8.3.9]{HS}) Let $R$ be a Noetherian local ring and $I$ be an ideal in $R$. Then 
$$
{\rm ht}(I)\le \ell(I)\le \dim {\rm gr}_I(R)=\dim R.
$$
\end{Theorem}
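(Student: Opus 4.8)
The plan is to observe that, when $R$ is normal and excellent---the only case used in this paper---the statement is an immediate consequence of what has been assembled above, once one passes from dimensions of $\mbox{Proj}$'s and of fibers to dimensions of the corresponding graded rings; in full generality it is \cite[Proposition 5.1.6 and Corollary 8.3.9]{HS}. We may assume $I$ is a proper nonzero ideal and $\dim R\ge 1$, the remaining cases being trivial. The bookkeeping uses the standard fact that for a finitely generated graded ring $S=\bigoplus_{n\ge 0}S_n$ which is generated over $S_0$ by $S_1$, with $S_0$ a field or a Noetherian local ring and $\mbox{Proj}(S)$ nonempty, one has $\dim S=\dim\mbox{Proj}(S)+1$; here this applies to $R[It]$, to $\mbox{gr}_I(R)$, and to the fiber cone $R[It]/m_RR[It]$.

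For the middle inequality, the fiber cone $R[It]/m_RR[It]$ is generated in degree one over the field $R/m_R$, so $\ell(I)=\dim\mbox{Proj}(R[It]/m_RR[It])+1$. As observed above, $\mbox{Proj}(R[It]/m_RR[It])$ is a closed subscheme of $\mbox{Proj}(\mbox{gr}_I(R))$, and the latter is a Cartier divisor on the blowup $\mbox{Proj}(R[It])$, which is birational to $\mbox{Spec}(R)$ and hence of dimension $\dim R$; therefore $\dim\mbox{Proj}(\mbox{gr}_I(R))=\dim R-1$ and $\ell(I)\le\dim R$. For the rightmost equality, the same passage from $\mbox{Proj}$ to ring applied to $\mbox{gr}_I(R)$ gives $\dim\mbox{gr}_I(R)=\dim\mbox{Proj}(\mbox{gr}_I(R))+1=\dim R$, which is also the classical equality $\dim\mbox{gr}_I(R)=\dim R$.

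For the leftmost inequality, I would invoke the fiber-dimension computation already made above: for a minimal prime $P$ of $I$, the ideal $I_P$ is primary for the maximal ideal of $R_P$, so the fiber of $\psi_0\colon\mbox{Proj}(\mbox{gr}_I(R))\to\mbox{Spec}(R/I)$ over $P$ has dimension $\dim R_P-1\ge\mbox{ht}(I)-1$; by upper semicontinuity of fiber dimension \cite[Corollary IV.13.1.5]{EGA28} this bounds $\dim\psi^{-1}(m_R)=\dim\psi_0^{-1}(m_R)$ from below. Since $\psi^{-1}(m_R)=\mbox{Proj}(R[It]/m_RR[It])$, this says $\mbox{ht}(I)\le\dim\mbox{Proj}(R[It]/m_RR[It])+1=\ell(I)$.

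There is no genuine obstacle once the geometric picture above is available; the points that need care are the translation between the Krull dimension of a graded algebra and the dimension of its $\mbox{Proj}$ (legitimate here because the Rees, associated graded and fiber-cone algebras are generated in degree one over a local, or field, degree-zero part), the input that blowing up a nonzero ideal in an excellent---hence universally catenary and Nagata---local domain leaves the dimension unchanged, and the fact that, outside the normal and excellent setting, one must rely on the cited results of \cite{HS} rather than on this geometry.
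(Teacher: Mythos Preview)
Your proposal is correct and follows essentially the same approach as the paper: the paper's own justification (for the normal excellent case) is precisely the geometric discussion in Subsection~1.1 preceding the theorem, translating between dimensions of $\mbox{Proj}$'s and the corresponding graded rings, using that $\mbox{Proj}(\mbox{gr}_I(R))$ is a Cartier divisor on the birational blowup and that the fiber cone gives a closed subscheme of it, together with the upper-semicontinuity fiber-dimension argument for the lower bound. Your write-up is a faithful elaboration of exactly these steps, with the general Noetherian case deferred to \cite{HS} just as in the paper.
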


\begin{Theorem}\label{TheoremC2}(\cite[Proposition 5.4.8]{HS}) Let $R$ be a Noetherian formally equidimensional local ring and let $I$ be an ideal in $R$. For every minimal prime ideal $P$ of ${\rm gr}_I(R)$, $\dim({gr}_I(R)/P)=\dim R$.
\end{Theorem}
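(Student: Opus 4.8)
The plan is to present ${\rm gr}_I(R)$ as the quotient of a localization of the extended Rees algebra by a nonzerodivisor, and then to invoke Ratliff's structure theory of formally equidimensional (= quasi-unmixed) rings. We may assume that $I$ is a proper ideal, since otherwise ${\rm gr}_I(R)=0$ and there is nothing to prove.

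First I would reduce to a local graded computation. The minimal primes of a Noetherian graded ring are homogeneous, so it suffices to treat homogeneous minimal primes $P$ of $G:={\rm gr}_I(R)=\bigoplus_{n\ge 0}I^n/I^{n+1}$. Since $R/I$ is local, every homogeneous prime of $G$ is contained in the maximal homogeneous ideal $\mathfrak M_G:=(m_R/I)\oplus\bigoplus_{n\ge 1}I^n/I^{n+1}$, and for a positively graded Noetherian algebra over a local ring one has $\dim G/\mathfrak a=\dim (G/\mathfrak a)_{\mathfrak M_G}$ for every homogeneous ideal $\mathfrak a$. Hence it is enough to prove that $\dim G_{\mathfrak M_G}/PG_{\mathfrak M_G}=\dim R$ for every minimal prime $P$ of $G$.

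Next I would bring in the extended Rees algebra $S=R[It,t^{-1}]=\bigoplus_{n\in\ZZ}I^nt^n$ (with $I^n=R$ for $n\le 0$). Here $t^{-1}$ is a nonzerodivisor on $S$, being a unit in the overring $R[t,t^{-1}]$, and $S/(t^{-1})\iso G$. Let $\mathfrak M$ be the preimage in $S$ of $\mathfrak M_G$ under $S\twoheadrightarrow G$; it is a prime ideal containing $t^{-1}$ (in fact a maximal ideal, since $S/\mathfrak M\iso R/m_R$), and setting $A:=S_{\mathfrak M}$ we obtain $A/(t^{-1})A\iso G_{\mathfrak M_G}$, with $t^{-1}$ a nonzerodivisor lying in the maximal ideal of the Noetherian local ring $A$. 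Cutting a Noetherian local ring by a nonzerodivisor in its maximal ideal lowers dimension by exactly one, so $\dim A=\dim G_{\mathfrak M_G}+1=\dim R+1$, the last step by Theorem \ref{TheoremC1}.

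The heart of the matter is Ratliff's theorem that an appropriate localization of the extended Rees algebra of a formally equidimensional local ring is again formally equidimensional; see \cite[Section 5.4]{HS}. Granting that $A$ is formally equidimensional, $A$ is catenary and equidimensional, so $\dim A/\mathfrak q+{\rm ht}\,\mathfrak q=\dim A$ for every prime $\mathfrak q$ of $A$. Given a minimal prime $P$ of $G$, the ideal $PG_{\mathfrak M_G}$ is a minimal prime of $G_{\mathfrak M_G}\iso A/(t^{-1})A$, hence corresponds to a prime $\widetilde P$ of $A$ minimal over $(t^{-1})$; Krull's principal ideal theorem gives ${\rm ht}\,\widetilde P\le 1$, and ${\rm ht}\,\widetilde P\ge 1$ because the nonzerodivisor $t^{-1}$ lies in no minimal prime of $A$. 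Thus ${\rm ht}\,\widetilde P=1$, and
$$
\dim {\rm gr}_I(R)/P=\dim G_{\mathfrak M_G}/PG_{\mathfrak M_G}=\dim A/\widetilde P=\dim A-1=\dim R .
$$
The main obstacle is exactly this cited theorem of Ratliff, whose proof needs the full machinery of quasi-unmixed rings — in particular the identification of formal equidimensionality with universal catenarity together with equidimensionality, and the control of the dimension formula along the extension $R\subseteq R[It,t^{-1}]$. Once that is taken as known, the remaining steps are the elementary reduction to the graded-local setting and Krull's principal ideal theorem.
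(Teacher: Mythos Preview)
Your argument is correct and is essentially the standard proof (and the one in \cite{HS} to which the paper defers): pass to the extended Rees algebra $R[It,t^{-1}]$, localize at the homogeneous maximal ideal, use that $t^{-1}$ is a nonzerodivisor to realize $\mathrm{gr}_I(R)$ as a hypersurface section, and invoke Ratliff's theorem that formal equidimensionality ascends to this localization so that every height one prime has coheight $\dim R$. You are also upfront that the substantive input is Ratliff's theorem, which is appropriate since the statement itself is cited from \cite{HS}.

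The paper, by contrast, does not give an algebraic proof of the general statement at all. It quotes the theorem from \cite{HS} and, in the discussion preceding it, only sketches a geometric justification in the special case where $R$ is a \emph{normal excellent} local ring: there $R$ is a domain, the blowup $B(I)=\mathrm{Proj}(R[It])$ is an integral excellent scheme birational to $\mathrm{Spec}(R)$ (hence of dimension $\dim R$ and catenary), and $\mathrm{Proj}(\mathrm{gr}_I(R))$ sits inside it as a Cartier divisor, forcing each irreducible component to have dimension $\dim R-1$; translating back through Proj gives the equidimensionality of $\mathrm{gr}_I(R)$. So your route is more general (it handles arbitrary formally equidimensional $R$, not just normal excellent domains) and self-contained modulo Ratliff, while the paper's route is a geometric picture that explains the phenomenon in the setting actually used later in the article.
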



We return to the case that $R$ is a normal excellent local ring of arbitrary dimension.
We have that $\ell(I)=\dim R$ if and only if $\dim \psi^{-1}(m_R)=\dim R-1$.
Since 
$$
Y=\mbox{Proj}(\overline{R[It]})\rightarrow B(I)=\mbox{Proj}(R[It])
$$
 is finite, $\dim \psi^{-1}(m_R)=\dim R-1$ if and only if there exists a prime divisor $E$ on $Y$ which contracts to $m_R$; that is, the center of $\nu_E$  on $R$ is $m_R$.  Writing 
 $$
 I\mathcal O_Y=\mathcal O_Y(-b_1F_1-\cdots - b_sF_s)
 $$
  where $F_i$ are prime divisors and $b_i>0$, we have that 
 $$
 \overline{I^n}=I(\nu_{F_1})_{nb_1}\cap \cdots \cap I(\nu_{F_s})_{nb_s}
 $$
 where $\nu_{F_i}$ is the discrete rank 1 valuation associated to the valuation ring $\mathcal O_{Y,F_i}$.  Since $-b_1F_1-\cdots-b_sF_s$ is ample on $Y$,
 we have that $\overline{I^n}\ne\cap_{i\ne j} I(\nu_{F_i})$ for all $j$ and $n\gg 0$ (so that $\nu_{F_1},\ldots,\nu_{F_s}$ are the Rees valuations of $I$). Thus $\dim \psi^{-1}(m_R)=\dim R-1$ holds if and only if $m_R\in \mbox{Ass}(R/\overline{I^n})$ for some $n$.  
   
 We have established the following theorem in our case of normal excellent local rings.

\begin{Theorem}\label{TheoremC4}(\cite{McA}, \cite[Theorem 5.4.6]{HS}) Let $R$ be a formally equidimensional local ring and $I$ be an ideal in $R$. Then $m_R\in \mbox{Ass}(R/\overline{I^n})$ for some $n$ if and only if $\ell(I)=\dim(R)$.
\end{Theorem}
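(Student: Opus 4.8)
The plan is to obtain the equivalence from the geometric picture developed above. When $R$ is a normal excellent local ring the discussion preceding the theorem already is a proof: with $Y\to\mbox{Spec}(R)$ the normalization of the blow up $B(I)$ and $I\mathcal O_Y=\mathcal O_Y(-b_1F_1-\cdots-b_sF_s)$, ampleness of $-(b_1F_1+\cdots+b_sF_s)$ forces $\nu_{F_1},\ldots,\nu_{F_s}$ to be the Rees valuations of $I$ and makes the condition $\dim\psi^{-1}(m_R)=\dim R-1$ equivalent, via the finiteness of $Y\to B(I)$, to some $F_i$ having center $m_R$ on $R$, equivalently to $m_R\in\mbox{Ass}(R/\overline{I^n})$ for $n\gg 0$. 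Since $\ell(I)=\dim R[It]/m_RR[It]=\dim\psi^{-1}(m_R)+1$, this gives that $\ell(I)=\dim R$ if and only if $m_R\in\mbox{Ass}(R/\overline{I^n})$ for some $n$.

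For an arbitrary formally equidimensional local ring I would reduce to that case. First pass to the $m_R$-adic completion $\widehat R$: it is local with $\widehat R/m_R\widehat R=R/m_R$ and $\dim\widehat R=\dim R$, and it is equidimensional because $R$ is formally equidimensional; since $\mbox{gr}_I$ and the fiber cone base change to $\widehat R$ we get $\ell(I)=\ell(I\widehat R)$, and one must check that $m_R\in\mbox{Ass}(R/\overline{I^n})$ for some $n$ is equivalent to the same statement over $\widehat R$ — this comes down to comparing $\overline{I^n}$ with $\overline{(I\widehat R)^n}$ together with the fact that $\bigcup_n\mbox{Ass}(R/\overline{I^n})$ is a finite, eventually constant set governed by the Rees valuations of $I$. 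Next, in the complete equidimensional setting, choose a minimal prime $P$ of $\widehat R$ (so $\dim\widehat R/P=\dim\widehat R$) and normalize $\widehat R/P$; this produces a complete normal local domain, hence an excellent one, and after checking that $\ell(I)$ and the associated-prime condition survive these two operations the geometric argument applies. Alternatively one may simply invoke McAdam's theorem, \cite{McA} (see also \cite[Theorem 5.4.6]{HS}).

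I expect the main obstacle to be the comparison steps of the reduction rather than the geometry. Completion does not commute with integral closure of powers in general, so preservation of the condition ``$m_R\in\mbox{Ass}(R/\overline{I^n})$ for some $n$'' has to be routed through the Rees valuations of $I$, whose centers are stable under completion in the relevant sense; and passing to $\widehat R/P$ and then to its normalization requires controlling how minimal primes and integral closure behave under these operations, in particular that $\ell(I)=\dim R$ is not lost. The conceptual content — that $m_R\in\mbox{Ass}(R/\overline{I^n})$ for some $n$ says exactly that $I$ has a Rees valuation centered at $m_R$, equivalently that the fiber of the normalized blow up of $I$ over $m_R$ has maximal dimension $\dim R-1$ — is precisely what the discussion above isolates, so the remaining work is the bookkeeping that carries it to the stated generality.
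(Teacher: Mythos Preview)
Your first paragraph is exactly the paper's argument: the text immediately preceding Theorem~\ref{TheoremC4} establishes the equivalence \emph{only} in the case that $R$ is a normal excellent local ring, via the normalized blow up $Y\to\mbox{Spec}(R)$, ampleness of $-B$ on $Y$, and the identification $\ell(I)=\dim\psi^{-1}(m_R)+1$. The paper then states the general formally equidimensional version with a citation to McAdam and \cite[Theorem 5.4.6]{HS} and does not attempt to deduce it from the special case---which is precisely your ``alternatively one may simply invoke McAdam's theorem.''

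Where you go beyond the paper is the sketched reduction (complete, mod a minimal prime, normalize). The paper does not attempt this, and your own third paragraph correctly flags the real obstructions. In particular, the step ``pass to $\widehat R/P$ for a single minimal prime $P$ and check that $\ell(I)=\dim R$ survives'' is not routine: one has only $\ell(I(\widehat R/P))\le\ell(I\widehat R)$ in general, and picking the right $P$ so that equality holds requires exactly the kind of equidimensionality input (e.g.\ Theorem~\ref{TheoremC2}) that is already close to the target. Likewise the associated-prime condition does not obviously descend from the normalization back to $\widehat R/P$ and then to $\widehat R$ without invoking the Rees-valuation description you are trying to reach. So your reduction is a plausible outline but not a proof; since the paper itself simply cites McAdam for the general case, your proposal---geometric argument for normal excellent $R$, citation for the rest---matches the paper's treatment, and the extra reduction sketch should be regarded as heuristic rather than complete.
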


The assumption of being formally equidimensional is not required for the if direction of Theorem \ref{TheoremC4} (this is Burch's theorem, \cite{Bu}, \cite[Proposition 5.4.7]{HS}).

Let $k=R/m_R$. Since $R[It]/m_RR[It]$ is a standard graded ring over $k$ (finitely generated in degree 1) it has a Hilbert polynomial P(n) which has degree $d=\ell(I)-1$; there exists a positive integer $n_0$ such that
\begin{equation}\label{eqI2}
\dim_k I^n/m_RI^n=  P(n)\mbox{ for }n\ge n_0.
\end{equation}
As $\overline{R[It]}/m_R\overline{R[It]}$ is a finitely generated graded ring over $k$, there exists $e\in \ZZ_{>0}$ and polynomials $P_0,\ldots, P_{e-1}$ of degree $d=\ell(I)-1$ such that
\begin{equation}\label{eqC5}
\dim_k \overline{I^n}/m_R\overline{I^n}=  P_i(n)\mbox{ for $n\ge n_0$ where $i\equiv n\mbox{ mod }e$}.
\end{equation}

\subsection{Filtrations}
Let $\mathcal I=\{I_n\}$ be a filtration on a local ring $R$. The Rees algebra of the filtration is $R[\mathcal I]=\oplus_{n\ge 0}I_n$. Analogously to the case of ideals, we define the fiber cone of the filtration $\mathcal I$ to be $R[\mathcal I]/m_RR[\mathcal I]$ and the analytic spread of the filtration of $\mathcal  I$ to be
\begin{equation}\label{eqN10}
\ell(\mathcal I)=\dim R[\mathcal I]/m_RR[\mathcal I].
\end{equation}
We have that ${\rm ht}(I_n)={\rm ht}(I_1)$ for all $n$ (\cite[equation (7)]{CPS}) so it is natural to define ${\rm ht}(\mathcal I)={\rm ht}(I_1)$.

We always have (\cite[Lemma 3.6]{CPS}) that 
$$
\ell(\mathcal I)\le \dim R
$$
so the second inequality of Theorem \ref{TheoremC1} always holds. 
 However, the first inequality of Theorem \ref{TheoremC1},
$
\mbox{ht}(\mathcal I)\le \ell(\mathcal I),
$
fails spectacularly, even attaining the condition that $\ell(\mathcal I)=0$ (\cite[Example 1.2, Example 6.1 and Example 6.6]{CPS}). The last two of these examples are of symbolic algebras of space curves, which are  divisorial filtrations. We give a further example where the inequality fails in Example \ref{Ex1} of this paper. Example \ref{Ex1} is of a symbolic algebra of an intersection of height 1 prime ideals in a two dimensional excellent normal local ring.
In the case that $\mathcal I$ is a Noetherian filtration ($R[\mathcal I]$ is a finitely generated $R$-algebra),  the lower bound ${\rm ht}(\mathcal I)\le \ell(\mathcal I)$ always holds
 (\cite[Proposition  3.7]{CPS}), so that the  inequality of Theorem \ref{TheoremC1} for ideals continues to hold for Noetherian filtrations. 
 
 The condition that  a filtration has  analytic spread zero has a simple ideal theoretic interpretation (\cite[Lemma 3.8]{CPS}).
 Suppose that  $\mathcal I=\{I_n\}$ is a filtration in a local ring $R$. Then the analytic spread
$\ell(\mathcal I)=0$ if and only if
$$
\mbox{For all $n>0$ and $f\in I_n$, there exists $m>0$ such that $f^m\in m_RI_{mn}$.}
$$

\subsection{Divisorial Filtrations}
 Let $R$ be a  local domain of dimension $d$ with quotient field $K$.  Let $\nu$ be a discrete valuation of $K$ with valuation ring $V_{\nu}$ and maximal ideal $m_{\nu}$.  Suppose that $R\subset V_{\nu}$. Then for $n\in \NN$, define valuation ideals
$$
I(\nu)_n=\{f\in R\mid \nu(f)\ge n\}=m_{\nu}^n\cap R.
$$
 
  A divisorial valuation of $R$ (\cite[Definition 9.3.1]{HS}) is a valuation $\nu$ of $K$ such that if $V_{\nu}$ is the valuation ring of $\nu$ with maximal ideal $m_{\nu}$, then $R\subset V_{\nu}$ and if $p=m_{\nu}\cap R$ then $\mbox{trdeg}_{\kappa(p)}\kappa(\nu)={\rm ht}(p)-1$, where $\kappa(p)$ is the residue field of $R_{p}$ and $\kappa(\nu)$ is the residue field of $V_{\nu}$.   If $\nu$ is divisorial valuation such that $m_R= m_{\nu}\cap R$, then $\nu$ is called an $m_R$-valuation.
   
 By \cite[Theorem 9.3.2]{HS}, the valuation ring of every divisorial valuation $\nu$ is Noetherian, hence is a  discrete valuation. 
 Suppose that  $R$ is an excellent local domain. Then a valuation $\nu$ of the quotient field $K$ of $R$ which is nonnegative on $R$ is a divisorial valuation of $R$ if and only if the valuation ring $V_{\nu}$  of $\nu$ is essentially of finite type over $R$ (\cite[Lemma 5.1]{CPS1}).
 
 In general, the filtration $\mathcal I(\nu)=\{I(\nu)_n\}$ is not Noetherian; that is, the graded $R$-algebra $\sum_{n\ge 0}I(\nu)_nt^n$ is not a finitely generated $R$-algebra. In a two dimensional normal local ring $R$, the condition that the filtration of valuation ideals $\mathcal I(\nu)$ is Noetherian for all $m_R$-valuations $\nu$ dominating $R$  is the condition (N) of Muhly and Sakuma \cite{MS}. It is proven in \cite{C2} that a complete normal local ring of dimension two satisfies condition (N) if and only if its divisor class group is a torsion group.





An integral  divisorial filtration of $R$ (which we will refer to as a divisorial filtration in this paper) is a filtration $\mathcal I=\{I_m\}$ such that  there exist divisorial valuations $\nu_1,\ldots,\nu_s$ and $a_1,\ldots,a_s\in \ZZ_{\ge 0}$ such that for all $m\in \NN$,
$$
I_m=I(\nu_1)_{ma_1}\cap\cdots\cap I(\nu_s)_{ma_s}.
$$




$\mathcal I$ is called an $\RR$-divisorial filtration if $a_1,\ldots,a_s\in \RR_{>0}$ and $\mathcal I$ is called a $\QQ$-divisorial filtration if $a_1,\ldots,a_s\in \QQ$. If $a_i\in \RR_{>0}$, then
$$
I(\nu_i)_{na_i}:=\{f\in R\mid \nu_i(f)\ge na_i\}=I(\nu_i)_{\lceil na_i\rceil},
$$
where $\lceil x\rceil$ is the round up of a real number.

Given an ideal $I$ in $R$, the filtration $\{\overline{I^n}\}$ is an example of a divisorial filtration of $R$. The filtration 
$\{\overline{I^n}\}$ is Noetherian if $R$ is universally Nagata.

It is shown in \cite[Theorem 4.5 ]{CPS} that 
 the ``if" statement of  Theorem \ref{TheoremC4} is true for divisorial filtrations of a local domain $R$.
 
 \begin{Theorem}(\cite[Theorem 4.5]{CPS})\label{TheoremN2} Suppose that $R$ is a local domain and $\mathcal I=\{I_n\}$ is  a divisorial filtration on $R$ such that $\ell(I)=\dim R$. Then $m_R\in \mbox{Ass}(R/\overline{I^n})$ for infinitely many $n$.
 \end{Theorem}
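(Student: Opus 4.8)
The plan is to combine a dimension-reduction trick with an explicit socle computation, after first putting $\mathcal I$ into an economical form. Discarding the $\nu_i$ with $a_i=0$ and then choosing a minimal subset $S\subseteq\{1,\ldots,s\}$ with $I_n=\bigcap_{i\in S}I(\nu_i)_{na_i}$ for all $n$, we may assume $a_i\geq 1$ for all $i$ and that this representation is irredundant: for each $i$ there is an $n$ with $I(\nu_i)_{na_i}\not\supseteq\bigcap_{j\neq i}I(\nu_j)_{na_j}$. Since the valuation ring of each $\nu_i$ is a discrete valuation ring, $I(\nu_i)_m$ is $p_i$-primary, where $p_i=m_{\nu_i}\cap R$ is the center of $\nu_i$; hence $\operatorname{Ass}(R/I_n)\subseteq\{p_1,\ldots,p_s\}$ for every $n$.

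The main step is the claim: \emph{if none of $\nu_1,\ldots,\nu_s$ is an $m_R$-valuation, then $\ell(\mathcal I)<\dim R$.} In that case every $p_i$ is a proper prime, so $m_R\not\subseteq\bigcup_i p_i$ by prime avoidance; fix $x\in m_R\setminus\bigcup_i p_i$. Since $x$ lies in no associated prime of any $R/I_n$, it is a nonzerodivisor on $R/I_n$, so $I_n\cap xR=xI_n$ for all $n$. Consequently the surjection $R[\mathcal I]\to\overline R[\overline{\mathcal I}]$ induced by $R\to\overline R:=R/xR$, where $\overline{\mathcal I}=\{(I_n+xR)/xR\}$, has kernel $\bigoplus_n(I_n\cap xR)t^n=xR[\mathcal I]$, whence $R[\mathcal I]/xR[\mathcal I]\cong\overline R[\overline{\mathcal I}]$; reducing further modulo $m_R$ (and using $xR[\mathcal I]\subseteq m_RR[\mathcal I]$) gives $R[\mathcal I]/m_RR[\mathcal I]\cong\overline R[\overline{\mathcal I}]/m_{\overline R}\overline R[\overline{\mathcal I}]$. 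Therefore $\ell(\mathcal I)=\ell_{\overline R}(\overline{\mathcal I})\leq\dim\overline R\leq\dim R-1$, the first inequality being \cite[Lemma 3.6]{CPS} and the second holding because $x$ is a nonzero element of the domain $R$.

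Now assume $\ell(\mathcal I)=\dim R$. By the main step some $\nu_{i_0}$ is an $m_R$-valuation, and by irredundancy there are $n_0\geq 1$ and $g\in C_{n_0}:=\bigcap_{j\neq i_0}I(\nu_j)_{n_0a_j}$ with $\nu_{i_0}(g)<n_0a_{i_0}$. For every $k\geq 1$ we get $g^k\in C_{kn_0}$ and $\nu_{i_0}(g^k)=k\,\nu_{i_0}(g)\leq kn_0a_{i_0}-k$. Since $\nu_{i_0}$ is normalized with value group $\ZZ$, the value semigroup $\nu_{i_0}(R\setminus\{0\})$ generates $\ZZ$ and hence contains all sufficiently large integers; so for large $k$ we may pick $r\in R$ with $\nu_{i_0}(g^kr)=kn_0a_{i_0}-1$ and set $h=g^kr\in C_{kn_0}$. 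Then $\nu_{i_0}(h)<kn_0a_{i_0}$ gives $h\notin I(\nu_{i_0})_{kn_0a_{i_0}}$, hence $h\notin I_{kn_0}$; but for every $m\in m_R$, $\nu_{i_0}(hm)=\nu_{i_0}(h)+\nu_{i_0}(m)\geq kn_0a_{i_0}$ because $\nu_{i_0}(m)\geq 1$, and $hm\in C_{kn_0}$, so $hm\in I(\nu_{i_0})_{kn_0a_{i_0}}\cap C_{kn_0}=I_{kn_0}$. Thus $m_R\in\operatorname{Ass}(R/I_{kn_0})$ for all large $k$, giving infinitely many such $n$.

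The main obstacle is the first step: extracting geometric information from the analytic-spread hypothesis when the fiber cone $R[\mathcal I]/m_RR[\mathcal I]$ need not be Noetherian, so that Hilbert-polynomial methods and the theory of associated primes of the Rees algebra are unavailable. The device that resolves it is to kill a single $x\in m_R$ that is a nonzerodivisor on every $R/I_n$: this leaves the fiber cone unchanged while lowering the dimension of the base, forcing $\ell(\mathcal I)\leq\dim R-1$. It is precisely here that the divisorial structure of $\mathcal I$ enters, through $\operatorname{Ass}(R/I_n)\subseteq\{p_1,\ldots,p_s\}$, which guarantees such an $x$ exists unless some $p_i=m_R$. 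A minor secondary point, needed to obtain ``infinitely many $n$'' rather than a single $n$, is the propagation of the irredundancy witness along the arithmetic progression $\{kn_0\}$ via powers of $g$, together with the cofiniteness of the value semigroup of $\nu_{i_0}$ in $\ZZ_{\geq 0}$.
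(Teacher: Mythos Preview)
The paper does not contain a proof of this theorem; it is quoted verbatim from \cite[Theorem~4.5]{CPS} as a known input and no argument is supplied here. There is therefore nothing in this paper to compare your proof against.

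That said, your argument is correct and self-contained. The reduction step is clean: once you pass to an irredundant representation, $\operatorname{Ass}(R/I_n)\subseteq\{p_1,\ldots,p_s\}$, so if no $p_i=m_R$ you can choose $x\in m_R\setminus\bigcup_i p_i$; then $I_n\cap xR=xI_n$ gives $R[\mathcal I]/m_RR[\mathcal I]\cong \overline R[\overline{\mathcal I}]/m_{\overline R}\overline R[\overline{\mathcal I}]$, and the general bound $\ell(\overline{\mathcal I})\le\dim\overline R$ (valid for arbitrary filtrations) forces $\ell(\mathcal I)\le\dim R-1$. The socle construction in the last paragraph is also fine: the key points---that $\nu_{i_0}(m)\ge 1$ for $m\in m_R$ because $\nu_{i_0}$ is an $m_R$-valuation, and that the value semigroup $\nu_{i_0}(R\setminus\{0\})$ is a numerical semigroup because it generates the value group $\ZZ$ of the DVR---are exactly what is needed to manufacture $h$ with $(I_{kn_0}:h)=m_R$ for all large $k$. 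Note finally that for a divisorial filtration each $I_n$ is already integrally closed (being an intersection of valuation ideals), so the conclusion $m_R\in\operatorname{Ass}(R/\overline{I_n})$ written in the statement is the same as $m_R\in\operatorname{Ass}(R/I_n)$, which is what you prove.
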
 
 
 An interesting question is if the converse of Theorem \ref{TheoremC4} is also true for divisorial filtrations of a local ring $R$.
 We prove this for two dimensional excellent normal local rings in this paper (Theorem \ref{Cor2}, also stated in Theorem \ref{ThmI1} of this introduction).
 
 \subsection{Divisorial filtrations on normal excellent local rings} Let $R$ be a normal excellent local ring. Let $\mathcal I=\{I_m\}$ where 
 $$
I_m=I(\nu_1)_{ma_1}\cap\cdots\cap I(\nu_s)_{ma_s}.
$$ 
 for some divisorial valuations $\nu_1,\ldots,\nu_s$ on $R$ be an $\RR$-divisorial filtration on a normal excellent local ring $R$, with $a_1,\ldots, a_s\in \RR_{>0}$. Then there exists a projective birational morphism $\phi:X\rightarrow \mbox{Spec}(R)$ such that there exist prime divisors $F_1,
 \ldots, F_s$ on $X$ such that $V_{\nu_i}=\mathcal O_{X,F_i}$ for  $1\le i\le s$. Let $D=a_1F_1+\cdots+a_sF_s$, an effective $\RR$-divisor. Define $\lceil D\rceil=\lceil a_1\rceil F_1+\cdots+\lceil a_s\rceil F_s$, an integral divisor. 
 We have coherent sheaves $\mathcal O_X(-\lceil n D\rceil)$ on $X$ such that 
 \begin{equation}\label{N1}
 \Gamma(X,\mathcal O_X(-\lceil nD\rceil ))=I_n
 \end{equation}
 for $n\in \NN$. If $X$ is nonsingular then $\mathcal O_X(-\lceil nD\rceil)$ is invertible. The formula (\ref{N1}) is independent of choice of $X$. Further, even on a particular $X$, there are generally many different choices of effective $\RR$-divisors $G$ on $X$ such that $\Gamma(X,\mathcal O_X(-\lceil nG\rceil))=I_n$ for all $n\in \NN$. Any choice of a divisor $G$ on such an $X$ for which the formula $\Gamma(X,\mathcal O_X(-\lceil nG\rceil))=I_n$ for all $n\in \NN$ holds will be called a representation of the filtration  $\mathcal I$. 
 
 Given an  $\RR$-divisor $D=a_1F_1+\cdots+a_sF_s$ on $X$ we have a divisorial filtration
 $\mathcal I(D)=\{I(D)_n\}$ where 
 $$
 I(D)_n=\Gamma(X,\mathcal O_X(-\lceil nD\rceil ))=I(\nu_1)_{\lceil na_1\rceil}\cap\cdots\cap I(\nu_s)_{\lceil na_s\rceil} 
 =I(\nu_1)_{ma_1}\cap\cdots\cap I(\nu_s)_{ma_s}.
 $$ 
 We write $R[D]=R[\mathcal I(D)]$.

 \subsection{Summary of principal results in this paper} Let $R$ be an excellent two dimensional normal excellent local ring with maximal ideal $m_R$.
 
 All possible analytic spreads $\ell(\mathcal I(D))=0,1,2$ can occur for $\QQ$-divisors $D$ on $R$. An example where $\ell(\mathcal I(D))=0<{\rm ht}(\mathcal I(D))=1$ is given in Example \ref{Ex1}.
 This example is of a symbolic filtration $\mathcal I(D)=\{Q_1^{(n)}\cap Q_2^{(n)}\cap Q_3^{(n)}\}$ where $Q_1,Q_2,Q_3$ are height one prime ideals in a two dimensional normal excellent local ring $R$. 
 In contrast, since the filtration $\mathcal I(D)$ is not Noetherian, we have (by \cite[Corollary 1.9]{CPS}) that for every $a\in \ZZ_{>0}$, the analytic spread of the ideal $Q_1^{(a)}\cap Q_2^{(a)}\cap Q_3^{(a)}$  is $\ell(Q_1^{(a)}\cap Q_2^{(a)}\cap Q_3^{(a)})=2$, the largest possible.

  We prove that the conclusions of Theorem \ref{TheoremC4} hold for $\QQ$-divisorial filtrations on $R$ in Theorem \ref{Cor2}.
   
\begin{Theorem}\label{ThmI1}(Theorem \ref{Cor2}) Let $R$ be a two dimensional normal excellent local ring. The following are equivalent for a $\QQ$-divisorial filtration $\mathcal I(D)$ on $R$.
\begin{enumerate}
\item[1)]
The analytic spread $\ell(\mathcal I(D))=\dim R[D]/m_RR[D]=2$.
\item[2)] $m_R\in \mbox{Ass}(R/I(nD))$ for some $n$.
\item[3)] There exists $n_0\in \ZZ_{>0}$ such that $m_R\in \mbox{Ass}(R/I(nD))$ for all $n\ge n_0$.
\end{enumerate}
\end{Theorem}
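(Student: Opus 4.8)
The plan is to prove $(2)\Rightarrow(1)\Rightarrow(3)\Rightarrow(2)$. The implication $(3)\Rightarrow(2)$ is trivial and $(1)\Rightarrow(2)$ is already Theorem \ref{TheoremN2}, so the work lies in $(1)\Rightarrow(3)$ and, above all, in the converse $(2)\Rightarrow(1)$ to Theorem \ref{TheoremN2}. Throughout I fix a resolution $\phi\colon X\to\mathrm{Spec}(R)$ on which $\mathcal I(D)$ is represented by $D=\sum a_iF_i$ with $a_i\in\QQ_{>0}$, so $I(nD)=\Gamma(X,\mathcal O_X(-\lceil nD\rceil))$. Among the $\nu_i=\nu_{F_i}$, those centered at a height one prime $p_i$ are necessarily the order valuations $\mathrm{ord}_{p_i}$ (a divisorial valuation of a normal domain centered at a height one prime is its order valuation), with non‑exceptional $F_i$; the remaining $\nu_i$ are $m_R$‑valuations. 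Set $J_n:=I(nD)^{**}=\bigcap_{\mathrm{ht}(p_i)=1}p_i^{(\lceil na_i\rceil)}$. Then $J_n$ is reflexive, hence $S_2$, hence of depth two, so $m_R\notin\mathrm{Ass}(R/J_n)$ for every $n$, and one checks that $m_R\in\mathrm{Ass}(R/I(nD))$ if and only if $I(nD)\subsetneq J_n$. Since $\{J_n\}$ is again a divisorial filtration, the Hilbert function theorem for fiber cones of divisorial filtrations (proved earlier) applies to it; as $m_R\notin\mathrm{Ass}(R/J_n)$ for all $n$, Theorem \ref{TheoremN2} gives $\ell(\{J_n\})\le 1$, and hence $\dim_k J_n/m_RJ_n$ is a bounded function of $n$.

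Granting this, $(1)\Rightarrow(3)$ is immediate: the Hilbert function theorem gives $\dim_k I(nD)/m_RI(nD)=\sigma n+\tau(n)$ with $\tau$ bounded, and $\ell(\mathcal I(D))=2$ forces $\sigma>0$, so $\dim_k I(nD)/m_RI(nD)\to\infty$; since $\dim_k J_n/m_RJ_n$ stays bounded, we cannot have $I(nD)=J_n$ for large $n$, i.e.\ $m_R\in\mathrm{Ass}(R/I(nD))$ for all $n\gg 0$.

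For $(2)\Rightarrow(1)$ I would argue as follows. First, the hypothesis propagates under powers: if $f\in J_{n_0}\setminus I(n_0D)$ then, since $f$ satisfies every height one condition, $\nu_i(f)<\lceil n_0a_i\rceil$ for some $m_R$‑valuation $\nu_i$ occurring in $D$, whence $\nu_i(f^m)=m\nu_i(f)<\lceil mn_0a_i\rceil$ and $f^m\in J_{mn_0}\setminus I(mn_0D)$; so $m_R\in\mathrm{Ass}(R/I(mn_0D))$ for all $m$, and in particular we may assume $n_0=m_0r$ with $r$ a common denominator of the $a_i$. Next, using the negative definiteness of the intersection matrix of the curves of $\phi$ over $m_R$ together with the asymptotic Zariski‑decomposition analysis of the earlier sections, $rD$ has an anti‑nef‑over‑$m_R$ reduction $E^{\natural}$ on $X$ with $I(mrD)=\Gamma(X,\mathcal O_X(-mE^{\natural}))$ for every $m$, and this yields a factorization $I(mrD)=H_{mr}\cdot\mathfrak a_m$ in which $H_{mr}$ accounts for the reflexive (height one) part — so that $\dim_k I(mrD)/m_RI(mrD)$ grows with $\mu(\mathfrak a_m)$ — and $\mathfrak a_m=\bigcap_E I(\nu_E)_{\delta_E(m)}$ is an intersection over the curves $E$ of $\phi$ over $m_R$, with $\delta_E(m)=m\,(E^{\natural})^{\mathrm{exc}}_E-\nu_E(H_{mr})$. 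Thus $\mathfrak a_m$ is an integrally closed $m_R$‑primary ideal (or all of $R$), $m_R\in\mathrm{Ass}(R/I(mrD))$ iff $\mathfrak a_m\ne R$, and the exponents are superadditive, $\delta_E(m+m')\ge\delta_E(m)+\delta_E(m')$, since $\{J_n\}$ is a filtration and $(E^{\natural})^{\mathrm{exc}}$ scales linearly. From $(2)$ we have $\mathfrak a_{m_0}\ne R$, so $\delta_{E_0}(m_0)\ge 1$ for some $E_0$ over $m_R$, and superadditivity gives $\delta_{E_0}(km_0)\ge k\to\infty$.

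It remains to deduce that $\mu(\mathfrak a_m)$, hence $\dim_k I(mrD)/m_RI(mrD)$, is unbounded. Every element of $\mathfrak a_m$ has $\nu_{E_0}$‑value at least $\delta_{E_0}(m)$, while for the fixed divisorial $m_R$‑valuation $\nu_{E_0}$ there is a constant $\lambda$ with $\nu_{E_0}(g)\le\lambda\cdot\mathrm{ord}_{m_R}(g)$ for all $g\in R$ (follow the finite blow‑up sequence defining $\nu_{E_0}$: each step multiplies $\mathrm{ord}_{m_R}$ by a bounded factor and adds a bounded amount). Hence $\mathrm{ord}_{m_R}(\mathfrak a_m)\ge\delta_{E_0}(m)/\lambda\to\infty$, and since $\mathfrak a_m$ is an integrally closed $m_R$‑primary ideal its minimal number of generators grows with its order (for regular $R$ one has $\mu(\mathfrak a_m)=\mathrm{ord}_{m_R}(\mathfrak a_m)+1$), so $\mu(\mathfrak a_m)\to\infty$; therefore $\dim_k I(mrD)/m_RI(mrD)\to\infty$, and being linear‑plus‑bounded with unbounded values it has positive leading coefficient, i.e.\ $\ell(\mathcal I(D))=2$. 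The main obstacle is precisely this implication $(2)\Rightarrow(1)$: unlike McAdam's setting, $\mathcal I(D)$ need not be Noetherian, so no single blow‑up of $\mathrm{Spec}(R)$ realizes the filtration and one must control the colengths $\ell_R(I(nD)^{**}/I(nD))$ and the numbers of generators $\mu(I(nD))$ asymptotically; the two structural inputs that make this possible — the factorization $I(mrD)=H_{mr}\mathfrak a_m$ through the Zariski decomposition on $X$, and the linear‑plus‑bounded shape of the fiber‑cone Hilbert function — are exactly what the preceding sections on asymptotic behaviour of divisors are built to supply.
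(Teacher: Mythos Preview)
Your argument for $(2)\Rightarrow(1)$ has two genuine gaps that are specific to the normal (as opposed to regular) setting.

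First, the multiplicative splitting $I(mrD)=H_{mr}\cdot\mathfrak a_m$ with $H_{mr}$ the reflexive hull and $\mathfrak a_m$ an $m_R$-primary ideal requires $H_{mr}$ to be invertible. In a two-dimensional normal local ring with nontrivial divisor class group, height-one reflexive ideals need not be principal; one only has a primary decomposition $I(mrD)=J_{mr}\cap q_m$, not a product, and without a product there is no reason $\mu(I(mrD))$ should be governed by $\mu(\mathfrak a_m)$. Second, the step ``$\mathrm{ord}_{m_R}(\mathfrak a_m)\to\infty$ forces $\mu(\mathfrak a_m)\to\infty$'' rests on the identity $\mu(I)=\mathrm{ord}(I)+1$ for complete $m_R$-primary ideals, which is Zariski's theory in a \emph{regular} two-dimensional local ring; for an arbitrary normal surface singularity no such lower bound on $\mu$ in terms of order is available, and you supply none.

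There is also a circularity in your use of Theorem~\ref{HilbThm}: the clause ``$\alpha>0$ if and only if $\dim R[D]/m_RR[D]=2$'' is deduced in the paper from Corollary~\ref{Cor4}, which is exactly the analytic-spread half of the theorem you are proving. Stripped of that, you would need an independent argument that an unbounded Hilbert function for the possibly non-Noetherian fiber cone forces Krull dimension $\ge 2$, and this is not automatic.

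The paper avoids all of these issues by never attempting an ideal-theoretic factorization in $R$. It passes to the Zariski decomposition $\Delta=D+B$ on a resolution and shows that a single intersection-theoretic condition, the existence of an exceptional curve $E$ with $(\Delta\cdot E)<0$, is simultaneously equivalent to $\ell(\mathcal I(D))=2$ (via $\sqrt{m_RR[\Delta]}=\bigcap_j P_j$ and the computation $\dim R[\Delta]/P_j=2\Leftrightarrow(\Delta\cdot E_j)<0$, Propositions~\ref{Prop80} and~\ref{Prop3}) and to $m_R\in\mathrm{Ass}(R/I(n\Delta))$ for some, equivalently all large, $n$ (Proposition~\ref{Prop2} and Corollary~\ref{Cor3}, comparing $\Delta$ with the Zariski decomposition of $\Delta-a_jE_j$). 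No generator counts or product decompositions in $R$ enter.
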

 
 We generalize the formula on Hilbert functions of filtrations of powers of ideals in (\ref{eqI2}) and (\ref{eqC5}) to $\QQ$-divisorial filtrations on $R$ in Theorem \ref{HilbThm}.

\begin{Theorem}(Theorem \ref{HilbThm}) Suppose that $R$ is a two dimensional normal excellent local ring and $\mathcal I(D)$ is a $\QQ$-divisorial filtration on $R$.
Then there exist a nonnegative rational number $\alpha$ and a bounded function $\sigma:\NN\rightarrow\QQ$ such that 
$$
\ell_R(I(nD)/m_RI(nD))=\ell_R((R[D]/m_RR[D])_n)=n\alpha+\sigma(n)
$$
for $n\in \NN$.
The constant $\alpha$ is positive if and only if $\dim(R[D]/m_RR[D])=2$.
\end{Theorem}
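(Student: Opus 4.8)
The plan is to pass to a resolution of singularities of $\mathrm{Spec}(R)$, translate the statement into one about cohomology of line bundles there, and then split $\ell_R(I(nD)/m_RI(nD))$ into a ``geometric'' part that grows linearly in $n$ and a ``correction'' part that stays bounded. Since $R$ is a two dimensional excellent local ring it admits a resolution of singularities; after blowing up further I fix a projective birational $\pi\colon X\to \mathrm{Spec}(R)$ with $X$ regular, with $m_R\mathcal O_X=\mathcal O_X(-Z)$ invertible for an effective exceptional divisor $Z$, and with $\mathcal I(D)$ represented on $X$, and write $\mathcal L_n=\mathcal O_X(-\lceil nD\rceil)$, so $I(nD)=\Gamma(X,\mathcal L_n)$. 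Because $R$ is normal of dimension two it is regular in codimension one, so $\pi$ is an isomorphism over $\mathrm{Spec}(R)\setminus\{m_R\}$; hence $R^1\pi_*\mathcal F$ and $H^1(X,\mathcal F)$ have finite length for every coherent $\mathcal F$ on $X$, and $Z$ is a projective $k$--scheme ($k=R/m_R$), so $\Gamma(Z,\cdot)$ of a coherent sheaf is finite dimensional over $k$. By Nakayama's lemma, $\ell_R(I(nD)/m_RI(nD))$ is the minimal number of generators of $I(nD)$.

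Put $J(nD)=\Gamma(X,\mathcal O_X(-Z-\lceil nD\rceil))$. If $f\in m_R$ and $g\in I(nD)$ then $\mathrm{div}_X(fg)\ge Z+\lceil nD\rceil$, so $m_RI(nD)\subseteq J(nD)\subseteq I(nD)$, and
$$\ell_R\bigl(I(nD)/m_RI(nD)\bigr)=\ell_R\bigl(I(nD)/J(nD)\bigr)+\ell_R\bigl(J(nD)/m_RI(nD)\bigr).$$
Taking cohomology in $0\to\mathcal O_X(-Z-\lceil nD\rceil)\to\mathcal L_n\to\mathcal L_n\otimes\mathcal O_Z\to 0$ (where $\mathcal O_Z=\mathcal O_X/m_R\mathcal O_X$) identifies $J(nD)$ with $\ker\bigl(\Gamma(X,\mathcal L_n)\to\Gamma(Z,\mathcal L_n\otimes\mathcal O_Z)\bigr)$ and yields $\ell_R(I(nD)/J(nD))=h^0(Z,\mathcal L_n\otimes\mathcal O_Z)-c_n$ with $0\le c_n\le h^1(X,\mathcal O_X(-Z-\lceil nD\rceil))$. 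For the second summand, generators $g_1,\dots,g_t$ of $m_R$ give a surjection $\mathcal O_X^{\,t}\to\mathcal O_X(-Z)$ of locally free sheaves with a fixed locally free kernel $\mathcal M$; tensoring with $\mathcal L_n$ and applying $\pi_*$, the image of $\Gamma(X,\mathcal L_n)^{\,t}\to J(nD)$ is $m_RI(nD)$, so $J(nD)/m_RI(nD)$ injects into $H^1(X,\mathcal M\otimes\mathcal L_n)$.

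Thus the theorem reduces to: (a) $n\mapsto h^1(X,\mathcal F\otimes\mathcal O_X(-\lceil nD\rceil))$ is bounded for each of the fixed sheaves $\mathcal F=\mathcal O_X,\ \mathcal O_X(-Z),\ \mathcal M$; and (b) $h^0(Z,\mathcal O_X(-\lceil nD\rceil)\otimes\mathcal O_Z)=n\alpha+(\text{bounded})$ for some $\alpha\in\QQ_{\ge 0}$. Granting these, the displayed identity gives $\ell_R(I(nD)/m_RI(nD))=n\alpha+\sigma(n)$ with $\sigma\colon\NN\to\QQ$ bounded, because $c_n$, $h^1(X,\mathcal M\otimes\mathcal L_n)$ and the periodic perturbation $\lceil nD\rceil-nD$ each contribute only bounded amounts. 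I expect (a) to follow from the asymptotic estimates for the cohomology of the sheaves $\mathcal O_X(-\lceil nG\rceil)$, and their twists by a fixed coherent sheaf, on resolutions of two dimensional normal excellent local rings developed earlier in the paper --- the essential input being that $n\mapsto h^1(X,\mathcal O_X(-\lceil nG\rceil))$ is bounded for an effective $\QQ$--divisor $G$. For (b), I would use the structure of effective $\QQ$--divisors on $X$, in particular a Zariski--type decomposition of $D$, to show that only finitely many isomorphism types of the line bundle $\mathcal O_X(-\lceil nD\rceil)\otimes\mathcal O_Z$ on the fixed, possibly non--reduced, exceptional curve $Z$ occur up to a bounded shift, so that $h^0$ grows linearly with an intersection--theoretic slope $\alpha$, necessarily a nonnegative rational number since $D$ is a $\QQ$--divisor. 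This analysis of sections of line bundles on the non--reduced exceptional curve, uniformly in $n$, is the step I expect to be the main obstacle.

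It remains to see that $\alpha>0$ if and only if $\dim(R[D]/m_RR[D])=2$. If $\alpha>0$ then $\ell_R\bigl((R[D]/m_RR[D])_n\bigr)\to\infty$ linearly in $n$; writing the fiber cone as the directed union of its finitely generated graded $k$--subalgebras, one of these has an unbounded Hilbert function, necessarily of degree one, so $\dim(R[D]/m_RR[D])\ge 2$, with equality by \cite[Lemma 3.6]{CPS}. Conversely, if $\alpha=0$ then $\ell_R\bigl((R[D]/m_RR[D])_n\bigr)$ is bounded, so every finitely generated graded $k$--subalgebra of $R[D]/m_RR[D]$ has uniformly bounded Hilbert function and hence dimension at most one; since dimension is the supremum of these, $\dim(R[D]/m_RR[D])\le 1$. (Alternatively, when $\dim(R[D]/m_RR[D])=2$ one has $m_R\in\mathrm{Ass}(R/I(nD))$ for all large $n$ by Theorem~\ref{ThmI1}, which is exactly what forces the exceptional contribution in (b) to grow, so $\alpha>0$.)
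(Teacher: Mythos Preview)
Your outline matches the paper's proof: both introduce the intermediate ideal $J(nD)=\Gamma(X,\mathcal O_X(-\lceil nD\rceil-Z))$, bound $J(nD)/m_RI(nD)$ via $H^1$ of the kernel sheaf coming from generators of $m_R$, and reduce to $h^0$ of line bundles on the exceptional scheme $Z$. However, your step (a) claims that $n\mapsto h^1(X,\mathcal O_X(-\lceil nG\rceil))$ is bounded for an arbitrary effective $\QQ$-divisor $G$. This is false, and it is not what the paper establishes: Proposition~\ref{Cor22} and Corollary~\ref{Cor23} require the divisor to be \emph{anti-nef}. For non-anti-nef $D$ these $h^1$ can blow up; e.g.\ on a resolution of an $A_2$-singularity with exceptional $(-2)$-curves $E_1,E_2$ meeting once, one checks $h^1(X,\mathcal O_X(-nE_1))\sim n^2/4$. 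The remedy, which the paper carries out in its first sentence, is to replace $D$ by the anti-nef part $\Delta$ of its Zariski decomposition \emph{before} doing any cohomology: Lemma~\ref{LemmaV2} gives $I(nD)=I(n\Delta)$, so $\ell_R(I(nD)/m_RI(nD))$ is unchanged, and now Corollary~\ref{Cor23} applies to $\mathcal O_X$, $\mathcal O_X(-Z)$ and your kernel $\mathcal M$. You invoke Zariski decomposition only for (b), but it is already essential for (a).

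Your step (b) is also too vague: ``finitely many isomorphism types up to bounded shift'' is not quite right, since even for integral anti-nef $\Delta$ the bundles $\mathcal O_X(-n\Delta)\otimes\mathcal O_{E_j}$ can be pairwise non-isomorphic (Example~\ref{Ex1}). What controls $h^0$ is the \emph{degree}, not the isomorphism class. Concretely, the paper filters $Z=\sum a_iE_i$ by adding one prime exceptional component at a time and applies Riemann--Roch together with Lemma~\ref{Lemma25} on each nonsingular $E_{\tau(j)}$; this yields $\alpha=-(\Delta\cdot Z)$ explicitly, and the final equivalence then follows from Corollary~\ref{Cor4}, since $-(\Delta\cdot Z)>0$ iff $(\Delta\cdot E_j)<0$ for some $j$. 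Your Hilbert-function argument for the equivalence is correct but does not produce this formula.
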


 It is unlikely that the function $\sigma(n)$ will always be eventually periodic. 
 It is shown in \cite[Theorem 9]{CS} that if $D$ has exceptional support then
 the Hilbert function of ${\rm gr}_{\mathcal I}(R)=\sum_{n\ge 0}I(nD)/I((n+1)D)t^n$ has an expression
 $$
 \ell_R(I(nD)/I((n+1)D))=n\beta+\tau(n)
 $$
 where $\beta\in \QQ$ and $\tau(n)$ is a bounded function. If $R$ has equicharacteristic zero then it is shown in \cite[Theorem 9]{CS} that $\tau(n)$ is eventually periodic, and \cite[Example 5]{CS} gives an example where  $R$ has equicharacteristic $p>0$ and $\tau(n)$ is not  eventually periodic.
 
 Suppose that $A$ is an excellent normal local ring of dimension 3. Let $Z\rightarrow \mbox{Spec}(A)$ be  a resolution of singularities and $D$ be an effective divisor on $Z$, all of whose components contract to the maximal ideal $m_A$.  Then the Hilbert polynomial
$ h(n)=\ell_A(I(nD)/I((n+1)D))$  may be far from being polynomial like. The examples (\cite[Example 6]{CS} and \cite[Theorem 1.4]{C3}) have the property that
$$
\lim_{n\rightarrow \infty}\frac{h(n)}{n^2}
$$
is an irrational number. These examples are in three dimensional equicharacteristic rings $A$ of any characteristic. The reason for this irrational behavior in dimension three is because of the lack of existence of Zariski decompositions in dimension three.

 We now give an outline of the proof of Theorem \ref{Cor2}.
Let $\pi:X\rightarrow \mbox{Spec}(R)$ be a resolution of singularities such that $D$ is represented on $X$. Let $E_1,\ldots,E_r$ be the prime exceptional divisors of $\pi$. An $\RR$-divisor $\Delta$ on $X$ is anti-nef if $(E\cdot \Delta)\le 0$ for all prime exceptional divisors $E$ on $X$.
Since $X$ has dimension two, $D$ has a Zariski decomposition, $\Delta=D+B$ where $\Delta$ is an anti-nef divisor and $B$  
is an effective divisor with exceptional support such that
$$
I(nD)=\Gamma(X,\mathcal O_X(-\lceil nD\rceil))=\Gamma(X,\mathcal O_X(-\lceil n\Delta\rceil))=I(n\Delta)
$$
for all $n\in \NN$.  This decomposition does not exist in higher dimensions, even after blowing up (\cite{C1},  \cite[Section IV.2.10]{Nak}, \cite[Section 2.3]{LA}).

\begin{Proposition}(Corollary \ref{Cor3}) Suppose that $\Delta$ is an effective  anti-nef $\QQ$-divisor on $X$. Then the following are equivalent.
\begin{enumerate}
\item[1)] There exists $n$ such that $m_R\in \mbox{Ass}(R/I(n\Delta))$.
\item[2)] There exists $n_0$ such that $m_R\in \mbox{Ass}(R/I(n\Delta))$ for all $n\ge n_0$.
\item[3)] There exists $j$ such that $E_j$ is exceptional and $(\Delta\cdot E_j)<0$.
\end{enumerate}
\end{Proposition}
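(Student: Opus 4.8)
The plan is to reduce the three conditions to a single comparison of ideals, translate that comparison through the Zariski decomposition of the non-exceptional part of $\Delta$, and then settle $3)\Rightarrow 2)$ by a power-up argument; $2)\Rightarrow 1)$ is trivial. Write $\Delta=\Delta'+\Delta_{ex}$ with $\Delta'$ the non-exceptional and $\Delta_{ex}$ the exceptional part. If $\Delta=0$ all three statements fail; if $\Delta'=0$ then $\Delta=\Delta_{ex}$ is a nonzero effective exceptional divisor, so $(\Delta\cdot E_j)<0$ for some $j$ by negative definiteness of the exceptional intersection matrix, and $I(n\Delta)$ is $m_R$-primary for all $n\ge 1$, so all three hold. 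So assume $\Delta'\neq0$. Then $\Delta_{ex}\neq0$ as well (an effective anti-nef divisor with no exceptional component is numerically trivial on the $E_i$, hence $0$, but a nonzero effective non-exceptional divisor meets the exceptional locus), so $\mathfrak q_n:=I(n\Delta_{ex})$ is $m_R$-primary, while $J_n:=I(n\Delta')$ has only height-one associated primes, and $I(n\Delta)=J_n\cap\mathfrak q_n$. Applying $H^0_{m_R}(-)$ to $0\to R/(J_n\cap\mathfrak q_n)\to R/J_n\oplus R/\mathfrak q_n\to R/(J_n+\mathfrak q_n)\to 0$, using $\mbox{depth}(R/J_n)\ge1$ while $R/\mathfrak q_n$ and $R/(J_n+\mathfrak q_n)$ have finite length, gives $H^0_{m_R}(R/I(n\Delta))\cong(J_n+\mathfrak q_n)/\mathfrak q_n$, hence
\[
m_R\in\mbox{Ass}(R/I(n\Delta))\iff J_n\not\subseteq\mathfrak q_n\iff I(n\Delta)\subsetneq I(n\Delta').
\]

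Next I bring in the Zariski decomposition of $\Delta'$, writing $\Theta=\Delta'+N$ where $\Theta$ is anti-nef, $N\ge0$ is the smallest effective exceptional divisor making $\Delta'+N$ anti-nef, $(\Theta\cdot E_i)=0$ for each component $E_i$ of $N$, and $I(n\Delta')=I(n\Theta)$ for all $n$. If $E_i$ is exceptional but not a component of $N$, then it is not a component of $\Theta=\Delta'+N$, so $(\Theta\cdot E_i)\ge0$, and anti-nefness forces $(\Theta\cdot E_i)=0$; thus $\Theta$ is numerically trivial on every exceptional curve. Since $\Delta_{ex}$ is effective exceptional with $\Delta'+\Delta_{ex}=\Delta$ anti-nef, minimality of $N$ gives $\Delta_{ex}\ge N$, so $B:=\Delta-\Theta=\Delta_{ex}-N$ is effective exceptional and $(\Delta\cdot E_i)=(B\cdot E_i)$ for all $i$. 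By negative definiteness, $B\neq0$ iff $(B\cdot E_j)<0$ for some $j$, i.e. iff condition $3)$ holds. If $B=0$ then $\Delta=\Theta$, so $I(n\Delta)=I(n\Theta)=I(n\Delta')$ for every $n$ and, by the first paragraph, $m_R\notin\mbox{Ass}(R/I(n\Delta))$ for all $n$; this is the contrapositive of $1)\Rightarrow3)$.

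For $3)\Rightarrow2)$, assume $B\neq0$, so $\Delta\neq\Theta$; since a divisorial filtration determines its anti-nef representative uniquely (uniqueness of the Zariski decomposition), $I(n_0\Delta)\neq I(n_0\Delta')$ for some $n_0$, and by the first paragraph there is $f\in I(n_0\Delta')\setminus I(n_0\Delta)$. Because these two ideals differ only through exceptional components, there is an exceptional divisor $E_j$, appearing in $\Delta$ with positive coefficient $b$, such that $\nu_{E_j}(f)<\lceil n_0b\rceil$; since $\nu_{E_j}(f)$ is an integer, $c:=\nu_{E_j}(f)<n_0b$, so $\varepsilon:=n_0b-c>0$. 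For $n$ large write $n=kn_0+s$ with $0\le s<n_0$, choose nonzero $h_s\in I(s\Delta')$ minimizing $\nu_{E_j}$, and set $D_0:=\max_{0\le s<n_0}\nu_{E_j}(h_s)<\infty$. Then $g:=f^kh_s\in I(n\Delta')$ (from $k\lceil n_0\Delta'\rceil+\lceil s\Delta'\rceil\ge\lceil n\Delta'\rceil$ coefficientwise), while
\[
\nu_{E_j}(g)=kc+\nu_{E_j}(h_s)\le kc+D_0<kc+k\varepsilon=kn_0b\le nb\le\lceil nb\rceil
\]
as soon as $k\varepsilon>D_0$, i.e. for all $n\ge n_1:=n_0(\lceil D_0/\varepsilon\rceil+1)$; so the coefficient of $E_j$ in $\mbox{div}_X(g)$ is $<$ that in $\lceil n\Delta\rceil$, giving $g\in I(n\Delta')\setminus I(n\Delta)$ and $m_R\in\mbox{Ass}(R/I(n\Delta))$ for all $n\ge n_1$. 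Together with $1)\Leftrightarrow3)$ and the trivial $2)\Rightarrow1)$ this proves the equivalence.

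The real content sits in the two-dimensional facts used in the second paragraph — existence of the Zariski decomposition $\Theta=\Delta'+N$ with $I(n\Delta')=I(n\Theta)$ for all $n$, minimality of $N$ (whence numerical triviality of $\Theta$ on all exceptional curves), and uniqueness of the anti-nef representative — which I would quote from the surface theory recalled in the outline. The one genuinely new maneuver, and the delicate point, is the uniform passage in the third paragraph from ``$m_R$ associated for a single $n_0$'' to ``for all large $n$'': one must control the orders $\nu_{E_j}$ across all residue classes modulo $n_0$, which works precisely because those orders stay bounded while the main term $k\varepsilon$ grows linearly in $n$.
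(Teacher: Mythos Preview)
Your argument is correct and runs parallel to the paper's. The paper (via Proposition~\ref{Prop2}) drops a single exceptional component $F_j$ from $\Delta$, takes the Zariski decomposition $\Delta_1$ of what remains, and compares $\Delta_1$ to $\Delta$; you instead drop all exceptional components at once to get $\Delta'$, form its Zariski decomposition $\Theta=\Delta'+N$, and compare $\Theta$ to $\Delta$. Both reductions land on the same key lemma: two distinct comparable effective anti-nef $\QQ$-divisors have distinct section ideals for all large $n$. Your parenthetical ``uniqueness of the Zariski decomposition'' undersells what is needed here --- Lemma~\ref{Lemma10} only asserts uniqueness of the decomposition of a \emph{fixed} divisor, not that a filtration recovers its anti-nef representative. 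The latter is the paper's Proposition~\ref{Prop1} (equivalently Corollary~\ref{Prop0}), whose proof rests on the boundedness of fixed components (Corollary~\ref{Lemma1}), i.e.\ on the hard work of Section~\ref{SecNef}. Once you invoke Proposition~\ref{Prop1} you already get $I(n\Delta)\ne I(n\Theta)$ for all large $n$, so your power-up construction in the third paragraph, while correct and pleasantly explicit, is redundant.
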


 Let $E_j$ be an exceptional divisor of $\pi$ and 
 $$
 P_j=\bigoplus_{n\ge 0}\Gamma(X,\mathcal O_X(-\lceil n\Delta\rceil-E_j))
 $$
  for $1\le j\le r$. $P_j$ is a prime ideal   in
 $R[\Delta]=R[D]$. In Proposition \ref{Prop80} it is shown that 
 $$
 \sqrt{m_RR[\Delta]}=\cap_{i=1}^rP_i.
 $$
  The following proposition computes the dimension of $R[\Delta]/P_j$ in terms of the intersection theory of $X$.

\begin{Proposition}(Proposition \ref{Prop3}) Suppose that $\Delta$ is an effective anti-nef $\QQ$-divisor on $X$ and $E_j$ is a prime  exceptional divisor for $\pi:X\rightarrow\mbox{Spec}(R)$. Then 
\begin{enumerate}
\item[1)] $\dim R[\Delta]/P_j=2$ if  $(\Delta\cdot E_j)<0$.
\item[2)] $\dim R[\Delta]/P_j\le 1$ if $(\Delta\cdot E_j)=0$.
\end{enumerate}
\end{Proposition}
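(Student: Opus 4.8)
The plan is to compute $\dim R[\Delta]/P_j$ by realizing this quotient ring as (the section ring of) a graded algebra attached to the curve $E_j$, and then reading off its dimension from the intersection numbers. Recall that $P_j = \bigoplus_{n\ge 0}\Gamma(X,\mathcal O_X(-\lceil n\Delta\rceil - E_j))$ sits inside $R[\Delta]=\bigoplus_{n\ge 0}\Gamma(X,\mathcal O_X(-\lceil n\Delta\rceil))$, so the quotient $R[\Delta]/P_j$ injects into $\bigoplus_{n\ge 0}\Gamma(X,\mathcal O_X(-\lceil n\Delta\rceil))/\Gamma(X,\mathcal O_X(-\lceil n\Delta\rceil - E_j))$, and each graded piece of the latter injects (via restriction of sections) into $\bigoplus_{n\ge 0}\Gamma(E_j,\mathcal O_X(-\lceil n\Delta\rceil)\otimes\mathcal O_{E_j})$. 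So $R[\Delta]/P_j$ is a graded subring of a graded ring whose $n$-th piece has $k$-dimension bounded by $h^0(E_j, \mathcal L_n)$ where $\mathcal L_n = \mathcal O_X(-\lceil n\Delta\rceil)|_{E_j}$, a line bundle on the projective curve $E_j$ over $k=R/m_R$ (using that $E_j$ is exceptional, hence proper over $\mathrm{Spec}(R/m_R)$).

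The degree of $\mathcal L_n$ on $E_j$ is $\deg_{E_j}\mathcal O_X(-\lceil n\Delta\rceil) = (-\lceil n\Delta\rceil \cdot E_j)$. Since $\Delta$ is anti-nef, $(-\Delta\cdot E_j)\ge 0$; and since $\lceil n\Delta\rceil - n\Delta$ has bounded coefficients, $(-\lceil n\Delta\rceil\cdot E_j) = -n(\Delta\cdot E_j) + O(1)$. The first step is to handle the two cases via Riemann–Roch on the (possibly reducible/non-reduced, but one-dimensional proper) scheme $E_j$: if $(\Delta\cdot E_j)<0$ then $\deg\mathcal L_n$ grows linearly in $n$, so $h^0(E_j,\mathcal L_n)$ grows linearly, giving $\dim R[\Delta]/P_j \le 2$; if $(\Delta\cdot E_j)=0$ then $\deg\mathcal L_n = O(1)$, so $h^0(E_j,\mathcal L_n)$ is bounded, giving $\dim R[\Delta]/P_j \le 1$. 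This proves the inequality "$\le$" in both parts, and in particular all of part (2).

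For the reverse inequality in part (1), i.e. $\dim R[\Delta]/P_j \ge 2$ when $(\Delta\cdot E_j)<0$, I would exhibit two algebraically independent elements of $R[\Delta]/P_j$ over $k$, or equivalently show the Hilbert function of the graded ring $R[\Delta]/P_j$ grows linearly. The natural approach: since $(\Delta\cdot E_j)<0$, by Proposition \ref{Prop80} $P_j$ is one of the minimal primes over $m_RR[\Delta]$, and $m_R\in\mathrm{Ass}(R/I(n\Delta))$ for $n\gg 0$ by the preceding Proposition (Corollary \ref{Cor3}); this forces the fiber cone $R[\Delta]/m_RR[\Delta]$ to have a component of dimension $2$. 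One then checks that the component cut out by $P_j$ is the (or a) two-dimensional one: the restriction maps $\Gamma(X,\mathcal O_X(-\lceil n\Delta\rceil))\to\Gamma(E_j,\mathcal L_n)$ have image of $k$-dimension $\ge \deg\mathcal L_n + 1 - (\text{something bounded}) $ for $n\gg0$ — using that $-\Delta$ is anti-nef so higher cohomology of the $\mathcal O_X(-\lceil n\Delta\rceil)$ is controlled and the exact sequence $0\to\mathcal O_X(-\lceil n\Delta\rceil - E_j)\to\mathcal O_X(-\lceil n\Delta\rceil)\to\mathcal L_n\to 0$ — so the graded piece $(R[\Delta]/P_j)_n$ has dimension growing linearly in $n$. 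Hence the Hilbert polynomial of $R[\Delta]/P_j$ has degree $1$, so $\dim R[\Delta]/P_j = 2$.

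The main obstacle I anticipate is the surjectivity/near-surjectivity of the restriction map $\Gamma(X,\mathcal O_X(-\lceil n\Delta\rceil))\to\Gamma(E_j,\mathcal L_n)$ for $n\gg 0$: one needs enough global sections on $X$ that do not all vanish on $E_j$, and moreover enough of them to fill up a linearly-growing-dimensional subspace of $\Gamma(E_j,\mathcal L_n)$. This is exactly where anti-nefness of $\Delta$ (equivalently, the nef/effective structure of the Zariski decomposition) is essential — it gives vanishing of $R^1\pi_*\mathcal O_X(-\lceil n\Delta\rceil)$ or at least boundedness of its length, so that $\ell_R(R/I(n\Delta))$ and the cokernel of the restriction map are controlled. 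I would lean on the cohomological estimates already developed for anti-nef divisors on resolutions of two-dimensional normal singularities (the same machinery underlying Corollary \ref{Cor3} and Proposition \ref{Prop80}), plus the fact that $(\Delta\cdot E_j)<0$ makes $\mathcal L_n$ of positive, linearly growing degree on at least one component of $E_j$, to push the argument through.
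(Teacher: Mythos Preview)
Your overall strategy---embed $R[\Delta]/P_j$ in the section ring $\bigoplus_n\Gamma(E_j,\mathcal L_n)$ on the curve $E_j$, use boundedness of $h^1(X,\mathcal O_X(-\lceil n\Delta\rceil-E_j))$ (Corollary~\ref{Cor23}) together with Riemann--Roch on $E_j$ to control the size of the graded pieces---matches the paper exactly, and your identification of the cohomological input is correct.

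The genuine gap is the final step in both parts: you pass from the growth rate of $\dim_k(R[\Delta]/P_j)_n$ to the Krull dimension by invoking ``the Hilbert polynomial of $R[\Delta]/P_j$.'' But $R[\Delta]$ is typically \emph{not} Noetherian (this is a central theme of the paper, cf.\ Example~\ref{Ex1}), so $R[\Delta]/P_j$ need not be a finitely generated $k$-algebra and there is no Hilbert polynomial to invoke. The implication ``linear growth of graded pieces $\Rightarrow$ Krull dimension $2$'' and ``bounded graded pieces $\Rightarrow$ Krull dimension $\le 1$'' both require justification in this non-Noetherian setting. The paper supplies exactly these missing arguments. For part~1), after establishing (as you do) that $\dim_k A_n\ge cn$ where $A=R[\Delta]/P_j$, the paper passes to a Veronese $\overline B$ of the full section ring on $E_j$, which \emph{is} finitely generated because $\mathcal O_X(-d\Delta)|_{E_j}$ is ample; it then picks a height-one homogeneous prime $Q\subset\overline B$ (geometrically, a point of $E_j$) avoiding a fixed nonzero $F\in A_{ne}$, and checks that $0\subsetneq Q\cap\overline A\subsetneq \overline A_+$ is a genuine length-two chain, using the linear growth to see $Q\cap\overline A\ne 0$. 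The upper bound $\dim A\le 2$ comes not from the Hilbert function but from the a~priori inequality $\dim R[\Delta]/m_RR[\Delta]\le\dim R=2$ of \cite[Lemma~3.6]{CPS}. For part~2), the paper argues that any finite chain of primes in $A$ already lies in some finitely generated truncation ${}_aA=k[A_1,\ldots,A_a]$; each ${}_aA$ has a standard-graded Veronese with bounded Hilbert function, hence $\dim{}_aA\le 1$, and therefore $\dim A\le 1$.

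(Also, your first approach to the lower bound in part~1)---use Corollary~\ref{Cor3} to get $m_R\in\mathrm{Ass}(R/I(n\Delta))$ and conclude the fiber cone has a two-dimensional component---is circular: that deduction is exactly Theorem~\ref{Cor2}, whose proof rests on the present proposition.)
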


 Since $\sqrt{m_RR[\Delta]}=\cap_{i=1}^rP_i$, we deduce Theorem \ref{Cor2} from Propositions \ref{Cor3} and \ref{Prop3}. 
 
 The theory of Zariski decomposition was created and developed by Zariski in \cite{Z} for projective surfaces over an algebraically closed field. In Section \ref{SecZD}, we give the relative version of this theory, over a two dimensional excellent normal local ring, and in Section \ref{SecNef}, we extend some results in \cite{Z} for numerically effective divisors on a nonsingular projective surface to our situation of a resolution of singularities of a two dimensional normal excellent local ring. 
 We prove the main results of this paper on asymptotic properties of divisors on a resolution of singularities of  a two dimensional normal excellent local ring in Section  \ref{SecAS}. We prove Theorem \ref{Cor2} in Section \ref{SecAS2} and Theorem \ref{HilbThm} in Section \ref{SecHilb}.

 \subsection{Notation}
 We will denote the nonnegative integers by $\NN$ and the positive integers by $\ZZ_{>0}$,   the set of nonnegative rational numbers  by $\QQ_{\ge 0}$  and the positive rational numbers by $\QQ_{>0}$. 
We will denote the set of nonnegative real numbers by $\RR_{\ge0}$ and the positive real numbers by $\RR_{>0}$. 
If $x\in \RR$, then $\lceil x\rceil$ is the smallest integer which is greater than or equal to $x$.

The maximal ideal of a local ring $R$ will be denoted by $m_R$. 
We will denote the length of an $R$-module $M$ by $\ell_R(M)$.
 \cite[Scholie IV.7.8.3]{EGAIV} gives a list of good properties of excellent local rings which we will assume.


\section{Divisors on a resolution of singularities of a two dim. local ring}\label{SecRes}
Throughout this paper 
$R$ will be a two dimensional excellent normal local ring with quotient field $K$, maximal ideal $m_R$ and residue field $k=R/m_R$.

From this section through Section \ref{SecAS},
$\pi:X\rightarrow \mbox{Spec}(R)$ will  be a resolution of singularities  such that  $\pi$ is projective and
all exceptional prime divisors of $\pi$ are nonsingular. Such a resolution of singularities exists by \cite{Li} or \cite{CJS}. Let $E_1,\ldots,E_r$ be the exceptional prime divisors for $\pi$. A divisor is exceptional if all its prime components map to $m_R$ by $\pi$. We will further assume that $\pi$ is not an isomorphism.

\begin{Remark}\label{RemarkR3}  Suppose that $\mathcal F$ is a coherent sheaf on $X$. Then $H^0(X,\mathcal F)$ is a finitely generated $R$-module, $H^1(X,\mathcal F)$ is an $R$ module of finite length and $H^2(X,\mathcal F)=0$.
\end{Remark}

\begin{proof}  By  \cite[Theorem III.5.2]{H}, $H^0(X,\mathcal F)$ is a finitely generated $R$-module.
By \cite[Theorem III.5.2 and Corollary III.11.2]{H},  $H^1(X,\mathcal F)$ is an $R$ module of finite length and
by \cite[Corollary III.11.2]{H}, $H^2(X,\mathcal F)=0$ since $\dim \pi^{-1}(m_R)=1$.
\end{proof}

An element of the free abelian group $\mbox{Div}(X)$ on the prime divisors of $X$ is called a divisor. 
Elements of $\mbox{Div}(X)\otimes \QQ$ are called $\QQ$-divisors and elements of $\mbox{Div}(X)\otimes\RR$ are called $\RR$-divisors. We will sometimes refer to a divisor as an integral divisor if we want to emphasize this fact. 
If $D_1$ and $D_2$ are $\RR$-divisors then write $D_2\ge D_1$ if $D_2-D_1$ is an effective divisor. The degree $\deg(\mathcal L)$ for $\mathcal L$ an invertible sheaf on a projective curve  is defined in Section \ref{SecRR}.

We use the intersection theory on $X$ developed in \cite[Sections 12 and 13]{Li2}.  
The intersection theory on $X$ is determined by the formula $(D\cdot E)=\deg(\mathcal O_X(D)\otimes\mathcal O_E)$ if $D$ is a divisor on $X$ and   $E$ is a prime exceptional  divisor on $X$.

An $\RR$-divisor $D$ is numerically effective (nef) if $(E\cdot D)\ge 0$ for all prime exceptional divisors $E$ of $X$. 
An $\RR$-divisor $D$ on $X$ is anti-effective or anti-nef if $-D$ is respectively effective or nef. A $\QQ$-divisor $D$ is anti-ample if $-D$ is ample and an (integral) divisor $D$ is anti-very ample if $-D$ is very ample.

Let $F$ be a prime divisor on $X$. Then $\mathcal O_{X,F}$ is a (rank 1) discrete valuation ring. Let $\nu_F$ be the associated valuation. For $0\ne f\in K$ the divisor of $f$ on $X$ is $(f)=\sum \nu_F(f)F$ where the sum is over all the prime divisors $F$ of $X$. Two divisors $D_1$ and $D_2$ are linearly equivalent, written $D_1\sim D_2$ if there exists $f\in K$ such that $(f)=D_2-D_1$. Two divisors $D_1$ and $D_2$ which are linearly equivalent are also numerically equivalent; that is, $(E\cdot D_2)=(E\cdot D_1)$ for all prime exceptional divisors $E$ of $\pi$.

Let $D=\sum b_iF_i$ be an integral divisor on $X$. There is an associated invertible sheaf $\mathcal O_X(D)$ on $X$ which is determined by  the property that  if $U$ is an affine open subset of $X$ and $h\in K$ is such that $h=0$ is a local equation of $D$ in $U$, then $\mathcal O_X(D)\mid U=\frac{1}{h}\mathcal O_U$.
Thus 
$$
\Gamma(X,\mathcal O_X(D))=\{f\in R\mid (f)+D\ge 0\}. 
$$

 Since  $R$ is a subset of $\Gamma(X,\mathcal O_X)$ in $K$ and $R$ is normal we have that
$\Gamma(X,\mathcal O_X)=R$ by Remark \ref{RemarkR3},
and so if $D$ is an effective divisor then $\Gamma(X,\mathcal O_X(-D))$ is an ideal in $R$.

Let $\lceil a\rceil$ denote the smallest integer that is greater than or equal to a real number $a$. If $D=\sum_{i=1}^s a_iF_i$ with $a_i\in \RR$ is an $\RR$-divisor, let $\lceil F\rceil = \sum \lceil a_i\rceil F_i$.
 
 Let $F$ be a prime divisor on $X$. For $\alpha\in \RR_{\ge 0}$ define valuation ideals in $R$ by
 $$
 I(\nu_F)_{\alpha}=\{f\in R\mid \nu_F(f)\ge\alpha\}.
 $$
 We necessarily have that $I(\nu_F)_{\alpha}=I(\nu_F)_{\lceil\alpha\rceil}$.

For an effective $\RR$-divisor $D=a_1F_1+\cdots+a_sF_s$, where $F_1,\ldots,F_s$ are prime divisors on $X$ and $a_i\in \RR_{\ge 0}$, we have an associated ideal in $R$
$$
I(D):=I(\nu_{F_1})_{a_1}\cap \cdots \cap I(\nu_{F_s})_{a_s}=I(\nu_{F_1})_{\lceil a_1\rceil}\cap \cdots \cap I(\nu_{F_s})_{\lceil a_s\rceil}=\Gamma(X,\mathcal O_X(-\lceil D\rceil)).
$$

Let $D$ be a  divisor on $X$. Then $\Gamma(X,\mathcal O_X(D))\ne 0$.
The fixed component of $D$ is the largest effective  divisor $F$ on $X$ such that 
$$ 
\Gamma(X,\mathcal  O_X(D))=\Gamma(X,\mathcal O_X(D-F)).
$$

For $n\in \NN$, let $B_n$ be the fixed component of $nD$ and let
$$
M_i=\{n\in \NN\mid E_i\mbox{ is not a component of }B_n\}.
$$
$M_i$ is a numerical semigroup, so if $M_i$ is nonzero, there exists $h_i\in \ZZ_{>0}$ such that for $n\gg 0$, $n\in M_i$ if and only if $h_i$ divides $n$.

The global sections $\Gamma(X,\mathcal O_X(D))$ of $\mathcal O_X(D)$ generate $\mathcal O_X(D)$ at a point $q\in X$ if $\mathcal O_X(D)_q=\Gamma(X,\mathcal O_X(D))\mathcal O_{X,q}$. The points $q\in X$ where $\mathcal O_X(D)$ is generated by global sections are necessarily disjoint from the support of the fixed component of $D$.

\begin{Lemma}\label{Lemma19} Let $D$ be an effective  divisor  on $X$ and let $F$ be a prime divisor in the support of the fixed component of $-D$. Then the support of $F$ is exceptional.
\end{Lemma}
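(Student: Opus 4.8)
The plan is to prove the contrapositive: if $F$ is a prime divisor on $X$ that is \emph{not} exceptional, then $F$ does not occur in the support of the fixed component of $-D$. First I would pin down the relevant identifications. Since $\pi$ is birational and $F$ is a proper closed subset of $X$, the generic point of $F$ cannot map to the generic point of $\operatorname{Spec}(R)$, and since $F$ is not exceptional it does not map to $m_R$; hence its image is a prime $\mathfrak p$ with $\operatorname{ht}(\mathfrak p)=1$, because the only prime of the two dimensional ring $R$ of height greater than one is $m_R$. As $R$ is normal, $R_{\mathfrak p}$ is a discrete valuation ring, and the only subrings of $K$ containing $R_{\mathfrak p}$ are $R_{\mathfrak p}$ and $K$; applying this to $R_{\mathfrak p}\subseteq\mathcal O_{X,F}\subsetneq K$ gives $\mathcal O_{X,F}=R_{\mathfrak p}$. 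Thus $\nu_F$ is the $\mathfrak p$-adic valuation and $I(\nu_F)_c=\mathfrak p^{(c)}$, the $c$-th symbolic power of $\mathfrak p$, for all $c\in\NN$. Write $D=\sum_j a_jG_j+\sum_i b_iE_i$, where the $G_j$ are the non-exceptional prime components of $D$ (with images the height one primes $\mathfrak p_j$) and the $E_i$ are the exceptional prime components of $D$, and let $a\ge 0$ be the coefficient of $F$ in $D$ (possibly $0$, if $F$ is not a component of $D$).

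The heart of the argument is to produce a nonzero $f\in I(D):=\Gamma(X,\mathcal O_X(-D))$ whose order along $F$ is exactly $a$. Granting this, $(f)-D$ is an effective divisor, linearly equivalent to $-D$, with coefficient $\nu_F(f)-a=0$ along $F$; since the fixed component $B$ of $-D$ satisfies $B\le (f')-D$ for every nonzero $f'\in I(D)$ (this is exactly what $\Gamma(X,\mathcal O_X(-D-B))=\Gamma(X,\mathcal O_X(-D))$ says), it follows that $B$ has coefficient $0$ along $F$, i.e. $F\notin\operatorname{Supp}(B)$, as desired.

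To construct $f$, I note that it must meet three kinds of constraints at once: $\nu_F(f)=a$ exactly; $\nu_{\mathfrak p_j}(f)\ge a_j$ for the remaining non-exceptional components; and $\nu_{E_i}(f)\ge b_i$ for the exceptional components (the order of any $f\in R$ along every further prime divisor of $X$ is automatically nonnegative, so these are the only conditions for $f\in I(D)$). First I would pick $g$ in the ideal $J:=\mathfrak p^{(a)}\cap\bigcap_j\mathfrak p_j^{(a_j)}$ with $g\notin\mathfrak p^{(a+1)}$: this is possible because, after localizing at $\mathfrak p$, the symbolic powers of the $\mathfrak p_j$ different from $\mathfrak p$ become the unit ideal, so $JR_{\mathfrak p}=\mathfrak p^{a}R_{\mathfrak p}\not\subseteq\mathfrak p^{a+1}R_{\mathfrak p}$. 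Then $\nu_{\mathfrak p_j}(g)\ge a_j$ for all $j$ and $\nu_F(g)=\nu_{\mathfrak p}(g)=a$. Next, since $\operatorname{ht}(m_R)=2>1=\operatorname{ht}(\mathfrak p)$ I can choose $t\in m_R\setminus\mathfrak p$; then $\nu_{E_i}(t)\ge 1$ for every exceptional $E_i$ (because $E_i$ has center $m_R$), while $\nu_{\mathfrak p}(t)=0$. Taking $f:=g\,t^{N}$ with $N$ larger than all the $b_i$ gives a nonzero element of $R$ with $\nu_{\mathfrak p_j}(f)\ge a_j$, $\nu_{E_i}(f)\ge N\ge b_i$, and $\nu_F(f)=\nu_{\mathfrak p}(g)+N\nu_{\mathfrak p}(t)=a$; hence $f\in I(D)$ with the required order along $F$. (As a byproduct this shows $I(D)\ne 0$, so the fixed component of $-D$ is indeed defined, and $-D$ meets the hypotheses under which the paper speaks of a fixed component.)

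The step I expect to be the main obstacle is precisely this simultaneous control of orders — forcing $f$ to be divisible to the prescribed, possibly large, degrees $b_i$ along the exceptional divisors while keeping its order along $F$ pinned exactly at $a$. The conditions along the height one primes are handled transparently by a localization-at-$\mathfrak p$ computation, but the exceptional conditions are invisible under that localization; the device reconciling them is multiplication by a high power of an element of $m_R$ lying outside $\mathfrak p$, which has positive order along every exceptional divisor and zero order along $\mathfrak p$. Everything else — the identification $\mathcal O_{X,F}=R_{\mathfrak p}$, the reformulation of the fixed component via global sections, and the final linear-equivalence bookkeeping — is routine.
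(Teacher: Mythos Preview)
Your proof is correct. The paper's argument is shorter and takes a slightly different tack: rather than constructing an explicit witness $f\in I(D)$ with $\nu_F(f)=a$, it simply localizes the ideal $I(D)=\bigcap_i I(\nu_{F_i})_{a_i}$ at the height-one prime $q_j=\pi(F_j)$ and observes that every factor with $i\ne j$ has radical either $m_R$ or some $q_i\ne q_j$, hence becomes the unit ideal in $R_{q_j}$; this gives $\Gamma(X,\mathcal O_X(-D))\,\mathcal O_{X,F_j}=(q_j^{a_j})_{q_j}=\mathcal O_X(-D)_{F_j}$, so $\mathcal O_X(-D)$ is generated by global sections at the generic point of $F_j$ and $F_j$ cannot lie in the fixed component. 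Your argument is really a decategorified version of this: the localization-at-$\mathfrak p$ step is exactly your choice of $g\in J\setminus\mathfrak p^{(a+1)}$, and your multiplication by $t^N$ with $t\in m_R\setminus\mathfrak p$ is the device that clears the implicit $m_R$-supported denominators coming from the exceptional valuation ideals while leaving the order along $F$ fixed. The paper's route is quicker; yours is more hands-on and makes the separate roles of the height-one and exceptional constraints very transparent.
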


\begin{proof}  Write $D=\sum_{i=1}^r a_iF_i$ where the $F_i$ are distinct prime divisors on $X$ and $a_i\in \NN$. Suppose that $F_j$ is not exceptional for $\pi$. Let $q_j=\pi(F_j)$, a height one prime ideal in $R$. Since $\pi$ is an isomorphism over $\mbox{Spec}(R)\setminus m_R$, we have that 
$R_{q_j}=\mathcal O_{X,F_j}$, so
$$
\begin{array}{lll}
\mathcal O_X(-D)_{F_j}&=&(q_j^{a_j})_{q_j}=(I(\nu_j)_{a_j})_{q_j}=\Gamma(X,\mathcal O_X(-D))_{q_j}\\
&=&\Gamma(X,\mathcal O_X(-D ))\mathcal O_{X,F_j}.
\end{array}
$$
Thus $F_j$ is not in the support of $F$. 
\end{proof}

The intersection matrix of the exceptional curves of $\pi$ is the $r\times r$ matrix $\left((E_i\cdot E_j)\right)$ which is negative definite (\cite[Lemma 14.1]{Li2}).

\begin{Proposition}\label{PropAmp} Let $D$ be a $\QQ$-divisor on $X$. Then $D$ is ample if and only if $(D\cdot E)>0$ for all prime exceptional divisors $E$ on $X$.
\end{Proposition}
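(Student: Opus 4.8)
The plan is to reduce the ampleness of $\mathcal O_X(D)$ on $X$ to the classical Nakai--Moishezon criterion on the one-dimensional closed fiber $\pi^{-1}(m_R)$, where the intersection numbers $(D\cdot E_i)$ are visibly the degrees of $\mathcal O_X(D)$ on the $E_i$. First I would reduce to the case that $D$ is an integral divisor: by definition a $\QQ$-divisor $D$ is ample exactly when some positive integer multiple $nD$ is an ample integral divisor, and since $(nD\cdot E)=n(D\cdot E)$ for every prime exceptional $E$, the stated inequalities are unaffected by this reduction.

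The implication ``$D$ ample $\Rightarrow$ $(D\cdot E)>0$ for all prime exceptional $E$'' is the easy direction: the restriction of an ample invertible sheaf to the closed subscheme $E_i$ is ample on the projective curve $E_i$ over $k$, hence has strictly positive degree, and by the definition of the intersection product recalled above this degree equals $(D\cdot E_i)$.

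For the converse, assume $(D\cdot E_i)>0$ for $1\le i\le r$. Since $\pi$ is projective and $\mbox{Spec}(R)$ is affine, $\mathcal O_X(D)$ is ample on $X$ if and only if it is $\pi$-ample, and since $\pi$ is proper, $\mathcal O_X(D)$ is $\pi$-ample if and only if its restriction to every fiber of $\pi$ is ample; this is the fibrewise criterion for relative ampleness (it can be found in \cite[9.6.4]{EGA28}, and in our special situation it also follows from the ampleness theory on resolutions of surface singularities in \cite{Li2}). Because $\pi$ is an isomorphism over $\mbox{Spec}(R)\setminus\{m_R\}$, each fiber over a point of the punctured spectrum is a single reduced point, over which every invertible sheaf is trivially ample. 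Hence the problem reduces to showing that $\mathcal O_X(D)\otimes\mathcal O_{\pi^{-1}(m_R)}$ is ample on the closed fiber $\pi^{-1}(m_R)$, which is projective over $k$ of dimension one (as used in Remark~\ref{RemarkR3}) and whose reduced one-dimensional irreducible components are precisely $E_1,\dots,E_r$. An invertible sheaf on a projective scheme of dimension at most one over a field is ample as soon as it has positive degree on each reduced one-dimensional irreducible component (this is the case $\dim\le 1$ of Nakai--Moishezon, which reduces to the statement that an invertible sheaf of positive degree on a projective curve over a field is ample). Since $\deg(\mathcal O_X(D)\otimes\mathcal O_{E_i})=(D\cdot E_i)>0$ by hypothesis, $\mathcal O_X(D)$ is ample, which finishes the proof after clearing denominators.

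The step I expect to require the most care is the middle one: $X$ itself is not proper over a field, so Nakai--Moishezon cannot be applied to $X$ directly, and one must pass through relative ampleness and use that the only positive-dimensional fiber of $\pi$, namely $\pi^{-1}(m_R)$, has all of its irreducible components among $E_1,\dots,E_r$, so that the entire positivity question is concentrated on a projective curve over $k$, where the intersection pairing with the $E_i$ literally computes degrees of line bundles. Once this reduction is in place everything else is routine.
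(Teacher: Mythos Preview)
Your argument is correct. The paper does not give its own proof of this proposition; it simply cites \cite[Theorem~12.1]{Li2} and remarks that Lipman's hypothesis $H^1(X,\mathcal O_X)=0$ is not needed for this particular conclusion. Your route is therefore genuinely different: rather than invoking Lipman's surface-singularity machinery, you reduce absolute ampleness to $\pi$-ampleness (valid since the base is affine), apply the fiberwise criterion for relative ampleness over a Noetherian base, and then use the one-dimensional Nakai--Moishezon criterion on the closed fiber $\pi^{-1}(m_R)$, whose integral one-dimensional components are exactly $E_1,\dots,E_r$. Each step is standard and correctly handled, including the passage from the possibly non-reduced fiber to its reduced components.

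What the two approaches buy: Lipman's argument is tailored to the resolution-of-surface-singularities setting and yields more (e.g.\ information about global generation and vanishing), which the paper exploits elsewhere. Your approach is more portable---it is the ``right'' proof if one is willing to quote EGA-level relative ampleness, and it makes transparent why only the exceptional curves matter. One minor bibliographic point: the fiberwise ampleness criterion you want is usually cited as \cite[III,~4.7.1]{EGAIV} in the EGA numbering (rather than IV$_3$~9.6.4); you may want to double-check the reference, though the mathematical content you are using is correct.
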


This is proved in \cite[Theorem 12.1]{Li2}. As commented in the proof of \cite[Theorem 12.1]{Li2}, the additional assumption there that $H^1(X,\mathcal O_X)=0$ is not necessary for this conclusion.

\begin{Lemma}\label{RemarkAN1}  The support of a nonzero effective anti-nef $\RR$-divisor $D$ on $X$ contains all exceptional prime divisors.
\end{Lemma}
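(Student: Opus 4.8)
The plan is to reduce the statement to the connectedness of the exceptional fiber $\pi^{-1}(m_R)=E_1\cup\cdots\cup E_r$, after recording one elementary positivity fact about intersections with exceptional curves. That fact is: if $F$ is a prime divisor on $X$ with $F\ne E_j$, then $(F\cdot E_j)\ge 0$, and $(F\cdot E_j)=0$ if and only if $F\cap E_j=\emptyset$. To see this I would use the defining formula $(F\cdot E_j)=\deg(\mathcal O_X(F)|_{E_j})$; since $X$ is regular, $F$ is Cartier, and since $E_j$ is not a component of $F$, the restriction $\mathcal O_X(-F)|_{E_j}$ is the ideal sheaf on the nonsingular projective curve $E_j$ of the effective (possibly zero) divisor $F\cap E_j$, so $(F\cdot E_j)$ equals the degree of that effective divisor, which is $\ge 0$ and vanishes exactly when $F\cap E_j=\emptyset$. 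By $\RR$-linearity of the pairing in its first argument, any effective $\RR$-divisor $G$ having no component equal to $E_j$ then satisfies $(G\cdot E_j)\ge 0$, with equality forcing each component of $G$ to be disjoint from $E_j$.

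With this in hand, the argument goes as follows. Write $D=H+\sum_{i=1}^r a_iE_i$ where $a_i\ge 0$ and $H\ge 0$ collects the components of $D$ that are not exceptional, and put $B=\{\,i:a_i=0\,\}$; the claim is exactly that $B=\emptyset$. For each fixed $j\in B$ the divisor $D$ has no component equal to $E_j$, so $(D\cdot E_j)\ge 0$ by the previous paragraph, while $(D\cdot E_j)\le 0$ since $D$ is anti-nef; hence $(D\cdot E_j)=0$, and as $(D\cdot E_j)=(H\cdot E_j)+\sum_{i\ne j}a_i(E_i\cdot E_j)$ is a sum of nonnegative numbers, every term is $0$. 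Thus $E_j$ is disjoint from the support of $H$ and from every $E_i$ with $a_i>0$. Now if $B=\{1,\dots,r\}$, then $D=H\ne 0$ is a nonzero effective divisor all of whose components are non-exceptional; each such component $F$ satisfies $\pi(F)=V(q)$ for the height-one prime $q=\pi(\eta_F)$, and since $R$ is local we have $m_R\in V(q)$, so $F$ meets $\pi^{-1}(m_R)=E_1\cup\cdots\cup E_r$ --- contradicting that every $E_i$ is disjoint from the support of $H$. So $A:=\{\,i:a_i>0\,\}\ne\emptyset$, and if also $B\ne\emptyset$, then $\bigcup_{i\in A}E_i$ and $\bigcup_{i\in B}E_i$ are two nonempty disjoint closed subsets covering the connected set $\pi^{-1}(m_R)$, a contradiction. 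Therefore $B=\emptyset$, i.e.\ every exceptional prime divisor lies in the support of $D$.

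I expect no serious obstacle. The connectedness of $\pi^{-1}(m_R)$, used at the last step, is the only outside ingredient; it follows from Zariski's connectedness theorem, since $\pi$ is proper and birational with $X$ and $R$ normal, so $\pi_*\mathcal O_X=\mathcal O_{\mbox{Spec}(R)}$ (each $\Gamma(\pi^{-1}(D(f)),\mathcal O_X)$ is a finite normal extension of $R_f$ inside $K$, hence equals $R_f$). The two small points that need care are the precise justification of the positivity fact for an intersection with an exceptional curve, and the remark that a nonzero effective divisor with only non-exceptional components must meet the exceptional fiber because $R$ is local, so every height-one prime is contained in $m_R$.
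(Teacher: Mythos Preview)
Your proof is correct and follows essentially the same route as the paper's: decompose $D$ into its non-exceptional part and $\sum a_iE_i$, use that $(F\cdot E_j)\ge 0$ whenever $E_j$ is not a component of $F$, and conclude via connectedness of $\pi^{-1}(m_R)$. The only minor difference is in showing that the set $\{i:a_i>0\}$ is nonempty: the paper invokes nonsingularity of the intersection matrix in the case $H=0$, whereas you handle it more directly by observing that if all $a_i=0$ then $D=H\ne 0$ and any non-exceptional component must meet $\pi^{-1}(m_R)$.
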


\begin{proof} Let $S$ be the set of exceptional prime divisors which are in the support of $D$. Write $D=B+\sum_{i=1}^ra_iE_i$ where $B$ is an effective divisor which contains no exceptional prime divisors in its support and all $a_i\ge 0$. For all  $E_j$, we have that 
$$
0\ge (D\cdot E_j)=(B\cdot E_j)+\sum_{i\ne j}a_i(E_i\cdot E_j)+a_j(E_j^2),
$$
 and so
\begin{equation}\label{eqN70}
-a_j(E_j^2)\ge (B\cdot E_j)+\sum_{i\ne j}a_i(E_i\cdot E_j)\ge 0.
\end{equation}
If $B$ is nonzero, then there exists $E_{j}$ such that $(E_{j}\cdot B)>0$ and thus $a_{j}>0$ and so $E_j\in S$. If $B=0$ then there exists $E_j$ such that $(E_j\cdot D)<0$ since $D\ne 0$ and the intersection matrix $\left((E_i\cdot E_j)\right)$ is nonsingular. 
Thus $S$ is nonempty. If $E_{j'}\in S$ and $E_j$ is such that $(E_j\cdot E_{j'})>0$ then $E_j\in S$ by (\ref{eqN70}).
The exceptional fiber $\pi^{-1}(m_R)$ is connected as $R$ is normal and $\pi$ is birational (by \cite[Corollary III.11.4]{H}).
Thus $S$ is the set of all exceptional prime divisors of $X$.
\end{proof}

\begin{Lemma}\label{Lemma200} $X$ is the blowup of an $m_R$-primary ideal.
\end{Lemma}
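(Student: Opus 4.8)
The plan is to exhibit $X$ first as the \emph{normalized} blowup of an $m_R$-primary ideal, and then, using that $R$ is excellent, to replace that ideal by a large power of its integral closure — which is a normal ideal, so its ordinary blowup already equals $X$. First I would produce an effective exceptional integral divisor $D=\sum_{i=1}^r a_iE_i$ (all $a_i>0$) with $-D$ ample over $\mbox{Spec}(R)$. Since the intersection matrix $M=((E_i\cdot E_j))$ is negative definite and its off-diagonal entries $(E_i\cdot E_j)$ ($i\ne j$) are $\ge 0$, the matrix $-M$ is symmetric positive definite with nonpositive off-diagonal entries, hence $-M^{-1}$ has nonnegative entries, which are in fact positive because $\pi^{-1}(m_R)$ is connected; so the vector $\mathbf a$ solving $M\mathbf a=-(1,\dots,1)$ has all coordinates positive. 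Clearing denominators gives such a $D$ with $(D\cdot E_j)<0$ for all $j$, so $-D$ is ample by Proposition \ref{PropAmp}. By Serre's theorem in the relative setting, $\mathcal O_X(-nD)$ is generated by its global sections for $n\gg 0$; fix such an $n$ and set $I:=\Gamma(X,\mathcal O_X(-nD))$. Since $nD$ is effective, $\mathcal O_X(-nD)\subseteq\mathcal O_X$, so $I$ is an ideal of $\Gamma(X,\mathcal O_X)=R$ (Remark \ref{RemarkR3} and normality) and $I\mathcal O_X=\mathcal O_X(-nD)$.

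Next I would check that $I$ is $m_R$-primary and that $X$ is its normalized blowup. Because $nD\ne 0$, $1\notin I$, so $I\subseteq m_R$; and because $\pi$ is an isomorphism over $\mbox{Spec}(R)\setminus\{m_R\}$ while $D$ is exceptional, $IR_{\mathfrak p}=R_{\mathfrak p}$ for every prime $\mathfrak p\ne m_R$. Hence $\sqrt I=m_R$ and $I$ is $m_R$-primary. Since $I\mathcal O_X$ is invertible, the universal property of blowing up factors $\pi$ through $B(I)$, and since $X$ is normal, through the normalized blowup $Y=\mbox{Proj}(\overline{R[It]})$; call the induced map $f\colon X\to Y$. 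Writing $I\mathcal O_Y=\mathcal O_Y(-B)$ with $-B$ ample on $Y$, we get $f^*\mathcal O_Y(-B)=I\mathcal O_X=\mathcal O_X(-nD)$, which is ample over $\mbox{Spec}(R)$; restricted to any fibre of $f$ this sheaf is both trivial (being a pullback) and ample, forcing the fibre to be finite, so $f$ is finite; being finite, birational, and mapping onto the normal scheme $Y$, $f$ is an isomorphism. Thus $X=Y=\mbox{Proj}(\overline{R[It]})$.

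Finally I would pass to a normal ideal. Since $R$ is excellent it is universally Nagata, so $\overline{R[It]}=\bigoplus_{k\ge 0}\overline{I^k}t^k$ is module-finite over $R[It]$, hence a finitely generated Noetherian graded $R$-algebra. Consequently, for $N\gg 0$ its $N$-th Veronese subalgebra $\overline{R[It]}^{(N)}=\bigoplus_{k\ge 0}\overline{I^{Nk}}t^{Nk}$ is generated in degree one over $R$, i.e. $(\overline{I^N})^k=\overline{I^{Nk}}$ for all $k$. Put $\hat I:=\overline{I^N}$; then $\sqrt{\hat I}=\sqrt I=m_R$, so $\hat I$ is $m_R$-primary, and $\bigoplus_{k\ge 0}\hat I^{\,k}t^k\cong\overline{R[It]}^{(N)}$, whence
$$
B(\hat I)=\mbox{Proj}\Big(\bigoplus_{k\ge 0}\hat I^{\,k}t^k\Big)=\mbox{Proj}\big(\overline{R[It]}^{(N)}\big)=\mbox{Proj}\big(\overline{R[It]}\big)=X.
$$

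The only real difficulty is that the naive choice $I=\Gamma(X,\mathcal O_X(-nD))$ realizes $X$ merely as the normalized blowup of $I$, not as an ordinary blowup; bridging this gap is exactly where excellence of $R$ enters, via finite generation of $\overline{R[It]}$ and the resulting normality of $\overline{I^N}$ for $N\gg 0$. The positivity of the coefficients $a_i$ in the first step and the ampleness/finiteness bookkeeping needed to conclude $f$ is an isomorphism are routine.
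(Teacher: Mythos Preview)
Your proof is correct and follows essentially the same route as the paper: construct an effective exceptional divisor whose negative is ample, take global sections to get an $m_R$-primary ideal, use the universal property of blowing up to compare $X$ with the (normalized) blowup, and invoke excellence to pass to a normal ideal. The only noteworthy differences are cosmetic: the paper first replaces $I$ by a normal ideal and then proves the isomorphism, while you do these in the opposite order; and for the isomorphism the paper argues that no exceptional curve is contracted (since $(-D\cdot E_i)>0$) so the map is quasi-finite, then appeals to Zariski's Main Theorem via analytic irreducibility of excellent normal local rings, whereas your ``pullback of ample is ample, hence trivial and ample on fibres, hence fibres finite, hence proper $+$ quasi-finite $=$ finite, hence iso onto normal target'' is a slightly more streamlined and self-contained variant of the same idea.
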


\begin{proof}
Since the intersection matrix $\left((E_i\cdot E_j)\right)$ is negative definite,
 there exists an effective anti-ample $\QQ$-divisor $A$ on $X$ with exceptional support (by Proposition \ref{PropAmp}). Thus $-dA$ is very ample for some $d\in \ZZ_{>0}$. Let $I=\Gamma(X,\mathcal O_X(-dA))$. The ideal $I$ is $m_R$-primary since the support of $A$ is exceptional.
 The integral closure of $\sum_{n\ge 0}I^nt^n$ in $R[t]$ is 
 $$
 \sum_{n\ge 0}\overline{I^n}t^n=\sum_{n\ge 0}\Gamma(X,\mathcal O_X(-ndA))t^n.
 $$
  Since $R$ is excellent, $\sum_{n\ge 0}\overline{I^n}t^n$ is a finitely generated graded $R$-algebra.  Thus after replacing $d$ with a higher power of $d$ we may assume that $I^n=\overline {I^n}=\Gamma(X,\mathcal O_X(-ndA))$ for all $n\in \ZZ_{>0}$
(as follows from \cite[Proposition III.3.2 and Proposition III.3.3 on pages 158 and 159]{Bou}).
 
 Let $Y=\mbox{Proj}(\oplus_{n\ge 0}I^n)$, which is normal since $\oplus_{n\ge 0}I^n$ is integrally closed.  Since $\mathcal O_X(-dA)$ is generated by global sections we have that $I\mathcal O_X=\mathcal O_X(-dA)$. By the universal property of blowing up (\cite[Proposition II.7.14]{H}), there exists a unique $R$-morphism $\phi:X\rightarrow Y$ such that $\phi^*\mathcal O_Y(1)\cong \mathcal O_X(-dA)$. $\phi$ is a birational morphism which is an isomorphism away from the preimage of $m_R$. $\phi$ is of finite type since $X\rightarrow \mbox{Spec}(R)$ is. Since $(-A\cdot E)>0$ for all exceptional curves of $X$ we have that $\phi$ does not contract any curves of $X$ and thus $\phi$ is quasi-finite. Let $p\in X$ and $q=\phi(p)$. Let $A=\mathcal O_{Y,q}$ and $B=\mathcal O_{X,p}$.  The birational extension $A\rightarrow B$ satisfies $m_AB$ is $m_B$-primary since $\phi$ is quasi-finite.    Since $A$ is normal and excellent it is analytically irreducible by  \cite[Scholie IV.7.8.3(vii)]{EGAIV}. 
Thus by Zariski's main theorem \cite[(10.7) page 240]{Ab3} or \cite[Proposition 21.53]{AG}, we have that $A=B$ and so $\phi$ is an isomorphism and $X$ is the blowup of the $m_R$-primary ideal $I$.
 \end{proof}

\begin{Lemma}\label{Lemma00} Let $A$ be a universally Nagata domain and $I$ be an ideal in $A$. Let $Y=\mbox{Proj}(\bigoplus_{n\ge 0}I^n)$.
Then the graded ring $\bigoplus_{n\ge 0}\Gamma(Y,I^n\mathcal O_Y)$ is a finite $\bigoplus_{n\ge 0}I^n$-module and there exists $n_0\in \ZZ_{>0}$ such that  $\Gamma(Y,I^n\mathcal O_Y)=I^n$ for $n\ge n_0$.
\end{Lemma}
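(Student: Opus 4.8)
The plan is to identify the graded ring $T:=\bigoplus_{n\ge 0}\Gamma(Y,I^n\mathcal O_Y)$ with the homogeneous $S_+$-saturation of the Rees algebra $S:=\bigoplus_{n\ge 0}I^nt^n$ of $I$, to squeeze $T$ between $S$ and the integral closure $\overline S$ of $S$ in its quotient field, and then to use that $\overline S$ is module-finite over $S$ — which is exactly what the universally Nagata hypothesis supplies. For the setup: a universally Nagata domain is Noetherian, so $S=A[It]\subseteq A[t]$ is a Noetherian graded ring generated over $S_0=A$ by $S_1=It$, and $Y=\mbox{Proj}(S)$ is projective over $\mbox{Spec}(A)$; we may assume $I\ne 0$, the other case being trivial, so that $\mbox{Frac}(S)=K(t)$ with $K=\mbox{Frac}(A)$ and $Y\to\mbox{Spec}(A)$ is birational. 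Since $S$ is generated in degree one, $\mathcal O_Y(1)$ is invertible and $I^n\mathcal O_Y=\mathcal O_Y(n)$ for $n\ge 0$, so $T=\bigoplus_{n\ge 0}\Gamma(Y,\mathcal O_Y(n))$, and the usual computation of sections of Serre twists on the $\mbox{Proj}$ of a domain gives $\Gamma(Y,\mathcal O_Y(n))=\{f\in K:fI^m\subseteq I^{m+n}\text{ for some }m\ge 1\}$; in particular $S\subseteq T$, and $T=\bigcup_{k\ge 0}(S:_{K(t)}S_+^k)$ is a graded subring of $K(t)$.

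The key step is the inclusion $T\subseteq\overline S$. Given a homogeneous element $ft^n$ of $T$, choose $m_0$ with $fI^m\subseteq I^{m+n}$ for all $m\ge m_0$; because $n\ge 0$, multiplication by $ft^n$ then carries the nonzero finitely generated ideal $\mathfrak a:=S_{\ge m_0}=\bigoplus_{m\ge m_0}I^mt^m$ of $S$ into itself, so the determinant trick shows $ft^n$ is integral over $S$, and hence $S\subseteq T\subseteq\overline S$. Now the hypothesis enters: $S=A[It]$ is a finitely generated algebra over the universally Nagata domain $A$, hence is itself a Nagata domain, so $\overline S$ is a finite $S$-module; as $S$ is Noetherian, the $S$-submodule $T\subseteq\overline S$ is then a finite $S$-module, which is the first assertion. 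I expect this inclusion $T\subseteq\overline S$ to be the main obstacle: it requires both the identification of $\Gamma(Y,I^n\mathcal O_Y)$ with the saturation $\{f:fI^m\subseteq I^{m+n}\}$ and the observation that its elements, twisted by $t^n$, stabilize a nonzero ideal of $S$. Note also that when $A$ is not normal $\Gamma(Y,\mathcal O_Y)$ can be strictly larger than $A$, so the passage through $\overline S$ and the Nagata finiteness really is needed, rather than some naive argument with valuation ideals of $A$.

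For the last assertion, $T/S$ is a finitely generated graded $S$-module whose associated sheaf on $Y$ vanishes: localizing at any homogeneous $g\in S_+$ gives $T_g=S_g$, since $ft^n\cdot g^{m_0}\in S$ for $m_0$ large. A finitely generated graded module over the Noetherian ring $S$ with vanishing associated sheaf is annihilated by a power of $S_+$, hence is concentrated in finitely many degrees, so $(T/S)_n=0$ for $n\gg 0$, that is, $\Gamma(Y,I^n\mathcal O_Y)=I^n$ for $n\gg 0$. Alternatively, once one has $S\subseteq T\subseteq\overline S$ with $\overline S$ module-finite over $S$ and all three generated in degree one over a Noetherian ring, equality in large degrees follows exactly as in the proof of Lemma \ref{Lemma200}, via \cite[Proposition III.3.2 and Proposition III.3.3]{Bou}.
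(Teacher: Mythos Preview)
Your proof is correct and follows essentially the same route as the paper's. The paper's proof simply cites the relevant portion of the proof of \cite[Theorem II.5.19]{H} together with the Nagata finiteness of integral closure; what you have done is spell out that argument explicitly---identifying $\bigoplus_n\Gamma(Y,\mathcal O_Y(n))$ with the $S_+$-saturation of $S=A[It]$, sandwiching it between $S$ and $\overline S$ via the determinant trick, and invoking the Nagata hypothesis to get module-finiteness of $\overline S$, from which both conclusions follow.
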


\begin{proof} This follows from the proof on the last two lines of page 122 through the first half of page 123 of \cite[Theorem II.5.19]{H}, along with the fact (observed in \cite[Remark 5.19.2]{H}) that the integral closure of a Nagata domain in its quotient field is a finite extension (by \cite[Proposition 31.B]{Mat}).
\end{proof}

\section{Riemann-Roch theorems for curves}\label{SecRR}
We summarize the famous Riemann-Roch theorems for curves. The following theorems are standard over algebraically closed fields. A reference where they are proven over an arbitrary field $k$ is \cite[Section 7.3]{Lin}. The results that we need are stated in \cite[Remark 7.3.33]{Lin}.

Let $E$ be an integral regular projective curve over a field $k$. For $\mathcal F$ a coherent sheaf on $E$ define
$h^i(\mathcal F)=\dim_kH^i(E,\mathcal F)$.

Let $D=\sum a_ip_i$ be a divisor on $E$, where $p_i$ are prime divisors on $E$ (closed points) and $a_i\in \ZZ$. We have an associated invertible sheaf $\mathcal O_X(D)$. Define
 $$
 \deg(D)=\deg(\mathcal O_E(D))=\sum a_i[\mathcal O_{E_i,p_i}/m_{p_i}:k].
 $$
The Riemann-Roch formula is
\begin{equation}\label{eq44}
\chi(\mathcal O_{E}(D)):=h^0(\mathcal O_{E}(D))-h^1(\mathcal O_{E}(D))=\mbox{deg}(D)+1-p_a(E)
\end{equation}
where $p_a(E)$ is the arithmetic genus of $E$. 

We further have Serre duality,
\begin{equation}\label{eq42}
H^1(E,\mathcal O_{E}(D))\cong H^0(E,\mathcal O_{E}(K-D))
\end{equation}
where $K=K_E$ is a canonical divisor on $E$. As a consequence, we have 
\begin{equation}\label{eq43}
\deg D>2p_a(E)-2=\deg(K)\mbox{ implies }H^1(E,\mathcal O_{E}(D))=0.
\end{equation}

We have the following  well known consequence of these formulas, which we record for future reference.

\begin{Lemma}\label{Lemma25} Let $E$ be an integral regular projective curve over a field $k$. Let $\{D_n\}_{n\ge 0}$ be an infinite sequence of  divisors on $E$ such that $\deg(D_n)$ is bounded from below and let $Z$ be a  divisor on $E$. Then there exists $s\in \ZZ_{>0}$ such that
$$
h^1(\mathcal O_{E}(D_n+Z))\le s\mbox{ for all }n\in \NN.
$$
\end{Lemma}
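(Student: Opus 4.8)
The plan is to reduce to the Riemann--Roch inequality and Serre duality already recorded in \eqref{eq44}--\eqref{eq43}. Fix the divisor $Z$, and let $K=K_E$ be a canonical divisor on $E$. By Serre duality \eqref{eq42} we have $h^1(\mathcal O_E(D_n+Z))=h^0(\mathcal O_E(K-D_n-Z))$, so the claim is equivalent to a uniform upper bound on $h^0(\mathcal O_E(K-Z-D_n))$ as $n$ varies. Set $W=K-Z$, a fixed divisor, so it suffices to bound $h^0(\mathcal O_E(W-D_n))$ uniformly. Since $\deg(D_n)$ is bounded below, say $\deg(D_n)\ge c$ for all $n$, the degree of $W-D_n$ is bounded above by $\deg(W)-c$, a constant independent of $n$.

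The key step is then the elementary fact that $h^0$ of an invertible sheaf on the integral regular projective curve $E$ is bounded in terms of its degree: if $\mathcal L$ is an invertible sheaf on $E$ with $\deg(\mathcal L)=e$, then $h^0(\mathcal L)\le \max(0,e+1)$. One way I would argue this: if $h^0(\mathcal O_E(G))>0$ then $G$ is linearly equivalent to an effective divisor, and then for any prime divisor $p_0$ (closed point) of $E$, the exact sequence $0\to\mathcal O_E(G-p_0)\to\mathcal O_E(G)\to\mathcal O_{p_0}(G)\to 0$ gives $h^0(\mathcal O_E(G))\le h^0(\mathcal O_E(G-p_0))+[\kappa(p_0):k]$; iterating this (peeling off points of the effective representative) together with the vanishing $h^0(\mathcal O_E(G'))=0$ once $\deg(G')<0$ on an integral projective curve, yields $h^0(\mathcal O_E(G))\le\deg(G)+1$ whenever $\deg(G)\ge 0$, and $h^0=0$ otherwise. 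Applying this with $G=W-D_n$ gives $h^0(\mathcal O_E(W-D_n))\le\max(0,\deg(W)-c+1)=:s$, and we may take this $s$ (adjusted to lie in $\ZZ_{>0}$) for all $n$.

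Alternatively, and perhaps more cleanly given what has already been stated, I would bound $h^1$ directly from the Riemann--Roch formula \eqref{eq44}: $h^1(\mathcal O_E(D_n+Z))=h^0(\mathcal O_E(D_n+Z))-\deg(D_n+Z)-1+p_a(E)$, so it suffices to bound $h^0(\mathcal O_E(D_n+Z))-\deg(D_n)$ from above uniformly, which again follows from $h^0(\mathcal O_E(D_n+Z))\le\max(0,\deg(D_n+Z)+1)$. Either route works; the first is slightly more transparent. I expect the only mild obstacle to be making the ``$h^0\le\deg+1$'' step clean over a general (not algebraically closed) field $k$, where one must be a little careful that $h^0(\mathcal O_E(G))=0$ when $\deg(G)<0$ on an integral projective curve, and that peeling off a closed point drops $h^0$ by at most $[\kappa(p_0):k]$; but both are standard and are essentially contained in the references \cite[Section 7.3]{Lin}, \cite[Remark 7.3.33]{Lin} cited for the Riemann--Roch package. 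Finally, replacing the bound by its ceiling if necessary, we obtain $s\in\ZZ_{>0}$ with $h^1(\mathcal O_E(D_n+Z))\le s$ for all $n\in\NN$, as required.
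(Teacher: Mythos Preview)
Your proof is correct and follows the same arc as the paper's: apply Serre duality to convert $h^1(\mathcal O_E(D_n+Z))$ into $h^0(\mathcal O_E(K-Z-D_n))$, note that the degree of the latter is bounded above, and deduce a uniform bound on $h^0$. The only tactical difference is how the estimate $h^0\le \deg+O(1)$ is obtained: you peel off closed points (or invoke Riemann--Roch directly), whereas the paper adds an auxiliary effective divisor $U$ of large degree so that $h^1(\mathcal O_E(K-(D_n+Z)+U))=0$, then reads off the bound from Riemann--Roch for $K-(D_n+Z)+U$ together with $h^0(\mathcal O_E(K-(D_n+Z)))\le h^0(\mathcal O_E(K-(D_n+Z)+U))$. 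One small correction: over a non-algebraically closed $k$ your constant ``$+1$'' should be $+h^0(\mathcal O_E)$, since $H^0(E,\mathcal O_E)$ can be a proper finite extension of $k$; you already anticipate this, and it does not affect the conclusion.
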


\begin{proof}
There exists an integer $c$  such that $\deg(D_n)\ge c$ for all $n$. Let $U$ be an effective divisor on $E$ of degree larger than $2p_a(E)-2+c$. 
By Serre duality (\ref{eq42}), 
$$
h^1(\mathcal O_{E}(D_n+Z))=h^0(\mathcal O_{E}(K-(D_n+Z))
$$
where $K$ is a cononical divisor on $E$. We have 
$$
\mbox{deg}(K-(Z+D_n))\le \deg(K-Z)-c.
$$
If $\mbox{deg}(K-Z)-c<0$, then certainly $h^0(\mathcal O_{E}(K-(D_n+Z))=0$. If 
$\mbox{deg}(K-Z)-c\ge 0$, then $h^1(\mathcal O_{E}(K-(D_n+Z)+U)=0$
by (\ref{eq43}) and so 
$$
\begin{array}{lll}
h^0(\mathcal O_{E}(K-(D_n+Z))&\le& h^0(\mathcal O_{E}(K-(D_n+Z)+U)\\
&=&\deg(K-(D_n+Z))+\deg(U)+1-p_a(E)\\
&\le& \deg(K-Z)-c+\mbox{deg}(U)+1-p_a(E).
\end{array}
$$
\end{proof}

If $\mathcal L$ is an invertible sheaf on $E$ then $\mathcal L\cong \mathcal O_E(D)$ for some divisor $D$ on $E$, and we may define $\deg(\mathcal L)=\deg(\mathcal O_X(D))=\deg(D)$.  

We will apply the above formulas in the case that $E$ is a prime exceptional divisor for a resolution of singularities $\pi:X\rightarrow\mbox{Spec}(R)$ as in Section \ref{SecRes}. We take $k=R/m_R$. We have that  $E$ is projective over $k=R/m_R$, and $E$ is a nonsingular (by assumption) integral curve.  Let $D$ be a divisor on $X$. Then
$\deg(\mathcal O_X(D)\otimes\mathcal O_E)=(D\cdot E)$.

\section{Zariski decomposition}\label{SecZD} In this section we present a relative form of the Zariski decomposition defined for projective surfaces over an algebraically closed field in \cite{Z}. Lemma \ref{Lemma10} in the case that $D$ is exceptional follows directly from \cite{Z} or \cite[Theorem 3.3]{BCK}.

We continue with our ongoing assumptions that  $R$ is  a two dimensional excellent normal local ring with quotient field $K$, maximal ideal $m_R$ and residue field $k=R/m_R$ and 
$\pi:X\rightarrow \mbox{Spec}(R)$ is a resolution of singularities  such that  
the  exceptional prime divisors  $E_1,\ldots,E_r$ are nonsingular.

The proof of the following lemma is a modification of the proof of \cite[Theorem 3.3]{BCK}.

\begin{Lemma}\label{Lemma10} Let $D$ be an effective $\RR$-divisor on $X$. Then there exist unique effective $\RR$-divisors $\Delta$ and $B$ on $X$ such that the following 1) and 2) hold. 
\begin{enumerate}
\item[1)] $\Delta=D+B$ is anti-nef and $B$ has exceptional support.
\item[2)] $(\Delta\cdot E)=0$ if $E$ is a component of $B$. 
\end{enumerate}
Further, 
\begin{enumerate}
\item[3)] $\Delta$ is the unique minimal effective anti-nef $\RR$-divisor such that $\Delta-D$ is effective with exceptional support.  
\item[4)] If $D$ is a $\QQ$-divisor then $\Delta$ and $B$ are $\QQ$-divisors. 
\end{enumerate}
\end{Lemma}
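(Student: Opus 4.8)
The plan is to construct $\Delta$ directly as the smallest anti-nef $\RR$-divisor lying above $D$ with exceptional difference, and then read off properties 1), 2), 4) from this characterization. First I would set up the search space: consider $\RR$-divisors of the form $D' = D + \sum_{i=1}^r x_i E_i$ with $x_i \ge 0$, and ask which choices of $x = (x_1,\dots,x_r)$ make $D'$ anti-nef, i.e. $(D'\cdot E_j)\le 0$ for all $j$. Writing $N = \left((E_i\cdot E_j)\right)$ for the (negative definite, by \cite[Lemma 14.1]{Li2}) intersection matrix, the condition $(D'\cdot E_j)\le 0$ for all $j$ reads $(D\cdot E_j) + (Nx)_j \le 0$, i.e. $Nx \le -v$ where $v_j = (D\cdot E_j)$. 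Because $-N$ is a symmetric positive definite matrix whose off-diagonal entries $-(E_i\cdot E_j)$ are $\le 0$ (distinct prime divisors meet non-negatively), $-N$ is an $M$-matrix, so $-N^{-1}$ has all entries $\ge 0$. This monotonicity is the key algebraic input: it guarantees that the set $S = \{x\in \RR_{\ge 0}^r : Nx \le -v\}$ is nonempty (take $x$ large, using that $-N^{-1}v' \ge 0$ dominates any prescribed vector) and, more importantly, closed under coordinatewise minimum. Indeed, if $x, y \in S$ then for $z = \min(x,y)$ one checks $(Nz)_j \le \min((Nx)_j,(Ny)_j) \le -v_j$ using that the off-diagonal entries of $N$ are $\ge 0$: for the index $j$ where $z_j$ equals, say, $x_j$, we have $(Nz)_j = N_{jj}x_j + \sum_{i\ne j}N_{ji}z_i \le N_{jj}x_j + \sum_{i\ne j}N_{ji}x_i = (Nx)_j \le -v_j$. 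Hence $S$ has a unique minimal element $x^*$ (coordinatewise), and I set $\Delta = D + \sum x_i^* E_i$, $B = \sum x_i^* E_i$.

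Next I would verify the complementarity condition 2). By minimality of $x^*$, if $x_j^* > 0$ then decreasing the $j$-th coordinate must violate membership in $S$, which forces $(\Delta\cdot E_j) = 0$: if instead $(\Delta\cdot E_j) < 0$ we could lower $x_j^*$ slightly (this only increases $(\Delta\cdot E_i)$ for $i\ne j$ by a non-negative amount times a negative... wait — lowering $x_j$ changes $(\Delta\cdot E_i)$ by $-N_{ij}\,\delta$, and since $N_{ij}\ge 0$ for $i\ne j$ this makes $(\Delta\cdot E_i)$ smaller, hence still $\le 0$; and $(\Delta\cdot E_j)$ stays $\le 0$ for small $\delta$), contradicting minimality. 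This gives 2), and 1) holds by construction, $B$ being a non-negative combination of exceptional divisors. For uniqueness of the pair $(\Delta,B)$ satisfying 1) and 2): if $(\Delta',B')$ is another such pair, standard Zariski-decomposition bookkeeping shows $\Delta = \Delta'$. One argues by comparing $B$ and $B'$: on the locus where $B - B'$ is "positive" one intersects with the corresponding exceptional curves and uses negative-definiteness of $N$ together with the complementarity conditions to force $B = B'$; concretely, $(B - B')$ paired with itself, split according to sign of coefficients, is both $\le 0$ (by negative-definiteness) and $\ge 0$ (by the two complementarity conditions, since $(\Delta\cdot E) = (\Delta'\cdot E) = 0$ on the relevant components and $\Delta-\Delta' = B-B'$), forcing it to vanish. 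Property 3) is then exactly the statement that $x^*$ is the minimal element of $S$, reinterpreted: any effective anti-nef $\RR$-divisor $\Delta''$ with $\Delta'' - D$ effective and exceptionally supported corresponds to some $x \in S$ with $\Delta'' = D + \sum x_i E_i \ge D + \sum x_i^* E_i = \Delta$. Finally 4): if $D$ is a $\QQ$-divisor then $v \in \QQ^r$ and the active constraints at $x^*$ (those $j$ with $x_j^* > 0$, for which $(\Delta\cdot E_j) = 0$) form a square rational linear system in the corresponding subset of coordinates with the remaining $x_i^* = 0$; solving it over $\QQ$ gives $x^* \in \QQ^r$, hence $\Delta, B \in \mathrm{Div}(X)\otimes\QQ$.

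The main obstacle I anticipate is making the existence/minimality argument for $S$ fully rigorous in the relative setting — in particular confirming that $-N$ is an $M$-matrix (the sign conditions: $N_{jj} < 0$ since $E_j$ is exceptional in a resolution, and $N_{ij} \ge 0$ for $i \ne j$ since distinct prime divisors intersect non-negatively, both of which hold here) and that $S$ is nonempty, which needs $-N^{-1} \ge 0$ so that one can find $x \ge 0$ with $Nx$ as negative as desired. The excerpt flags that the proof follows \cite[Theorem 3.3]{BCK} with modifications, so I expect the substantive new content is precisely checking that the lattice-theoretic structure (closure of $S$ under $\min$, existence of a least element) survives over an excellent base with possibly non-algebraically-closed residue field and several exceptional components, rather than any genuinely new geometric idea. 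The uniqueness part (2) and the rationality claim (4) are then comparatively routine once the minimal $x^*$ is in hand.
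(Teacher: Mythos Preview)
Your approach is essentially the paper's: both set up the polyhedron $S=\{x\in\RR_{\ge 0}^r: Nx\le -v\}$, use that $S$ is closed under coordinatewise $\min$ (same off-diagonal sign argument), extract a minimal point and read off complementarity, prove uniqueness via negative-definiteness, and get rationality by solving the active linear subsystem. The only substantive difference is in how existence of the minimum is established: the paper bounds $S$ above by an anti-ample exceptional divisor $A$ (Proposition~\ref{PropAmp}) and minimizes the linear functional $\sum x_i$ on the resulting compact set, whereas you appeal to the $M$-matrix property and closure under $\min$ directly --- your step ``hence $S$ has a unique minimal element'' is correct but needs the extra remark that $S$ is nonempty and topologically closed in $\RR_{\ge 0}^r$, so the coordinatewise infimum actually lies in $S$.
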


The decomposition $\Delta=D+B$ of the conclusions of Lemma \ref{Lemma10} is called the Zariski decomposition of $D$.

\begin{proof} 
For $x=(x_1,\ldots,x_r)\in \RR^r$, consider the inequalities 
\begin{equation}\label{eq10}
0\le x_i\mbox{ for }1\le i\le r
\end{equation}
and
\begin{equation}\label{eq11}
\left((D+\sum_{i=1}^rx_iE_i)\cdot E_j\right)\le 0\mbox{ for }1\le j\le r.
\end{equation}

Since the matrix $((E_i\cdot E_j))$ is negative definite and by Proposition \ref{PropAmp}, there exists an anti-ample, effective divisor $A=\sum_{i=1}^ra_iE_i$ on $X$.  Thus $a_i>0$ for all $i$ (by Lemma \ref{RemarkAN1}) and after possibly replacing $A$ with a positive multiple of $A$, $x=a=(a_1,\ldots,a_r)$ satisfies (\ref{eq10}) and (\ref{eq11}). 
Let
\begin{equation}\label{eq12}
S=\{x\in \RR^r\mid x_i\le a_i\mbox{ for  all $i$ and the $2r$ inequalities (\ref{eq10}) and (\ref{eq11}) are satisfied}\}.
\end{equation}
The set $S$ is nonempty and  compact. Thus there is at least one point in $S$  such that $\sum_{i=1}^rx_i$ is minimized on $S$. Let $b=(b_1,\ldots,b_r)$ be such a point. Let $B=b_1E_1+\cdots+b_rE_r$ and $\Delta=D+B$. Then $\Delta$ is an effective, anti-nef $\RR$-divisor and $B$ is an effective $\RR$-divisor with exceptional support. 
Let $E_j$ be a component of $B$. Since $b$ minimizes $\sum x_i$, $B-\epsilon E_j$ is effective and  $\Delta-\epsilon E_j$ is not anti-nef for all $\epsilon>0$  sufficiently small. But $\left((\Delta-\epsilon E_j)\cdot E_i\right)\le 0$ for all $i\ne j$ so we must have that  $\left((\Delta-\epsilon E_j)\cdot E_j\right)>0$ for all positive $\epsilon$ and thus $(\Delta\cdot E_j)=0$ since $\Delta$ is anti-nef. Thus the decomposition $\Delta=D+B$ satisfies 1) and 2).

For $b=(b_1,\ldots,b_r)$, $b'=(b_1',\ldots,b_r')\in \RR^r$, define 
$$
\mbox{min}(b,b')=(\mbox{min}(b_1,b_1'),\ldots,\mbox{min}(b_r,b_r')).
$$
If $b$ and $b'$ satisfy (\ref{eq10}) and (\ref{eq11}) then $\mbox{min}(b,b')$ also satisfies (\ref{eq10}) and (\ref{eq11}),
as we now show. For a fixed $j$, we may assume that $\mbox{min}(b_j,b_j')=b_j$ (after possibly interchanging $b$ and $b'$). Then 
since $(E_i\cdot E_j)\ge 0$ if $i\ne j$, we have that 
$$
((D+\sum_i\mbox{min}(b_i,b_i')E_i)\cdot E_j)\le ((D+\sum_ib_iE_i)\cdot E_j)\le 0.
$$

Suppose that $B=\sum b_iE_i$ and $B'=\sum b_i'E_i$ are effective $\RR$-divisors such that $\Delta=D+B$ and $\Delta'=D=B'$ satisfy both 1) and 2). We will show that $B=B'$ and so $\Delta=\Delta'$.  Let $\mbox{min}(B,B')=\sum_i\mbox{min}(b_i,b'_i)E_i$. There exist $x_i\ge 0$ such that  $\mbox{min}(B,B')=B-\sum_i x_iE_i$.  Since $D+\mbox{min}(B,B')$ is anti-nef, for each element $E_j$ of the support of $B$ we have 
$$
0\ge \left((D+\mbox{min}(B,B'))\cdot E_j\right)=\left((\Delta-\sum_i x_i E_i)\cdot E_j\right)=-\sum_i x_i(E_i\cdot E_j).
$$
Thus $\sum_i x_i(E_i\cdot E_j)\ge 0$ and so 
$$
\left((\sum_i x_iE_i)\cdot (\sum_j x_jE_j)\right)=\sum_i\sum_j x_ix_j(E_i\cdot E_j)\ge 0.
$$
Since the matrix $\left((E_i\cdot E_j)\right)$ is negative definite, we have that $x_i=0$ for all $i$. Thus $B=\mbox{min}(B,B')$. Similarily, $B'=\mbox{min}(B,B')$ and so $B=B'$.
Thus there is a  unique effective $\RR$-divisor $B$ with exceptional support such that $B$ and $\Delta=D+B$ satisfy 1) and 2).

We now show that $\Delta$ is the unique minimal effective and anti-nef $\RR$-divisor on $X$ such that $\Delta-D$ is effective with exceptional support. Let $U$ be an effective anti-nef $\RR$-divisor on $X$ such that $U-D$ is effective with exceptional support. Let $U'=D+\mbox{min}(\Delta-D,U-D)$. As shown earlier in the proof, $U'\ge D$ is effective and anti-nef. Write $U'-D=\sum u_iE_i$ and  $B=\Delta-D=\sum b_iE_i$. We have $\sum u_i\le \sum b_i\le \sum a_i$ so $U'-D\in S$ (defined in (\ref{eq12})). Since $\sum b_i$ is the minimum of $\sum x_i$ on $S$, we have that $u_i=b_i$ for all $i$ and so $U'=\Delta$. Thus $\Delta\le U$.

Now suppose that $D$ is an effective $\QQ$-divisor on $X$. Let $\Delta=D+B$ be the Zariski decomposition of $D$.
After possibly reindexing the $E_1,\ldots,E_r$, we may assume that the support of $B$ is $E_1\cup\cdots\cup E_s$ for some $s$ with $1\le s\le r$. Expand $D=F+\sum_{i=1}^rc_iE_i$ where $F$ is an effective $\QQ$-divisor whose support does not contain any prime exceptional divisor and $c_1,\ldots,c_r\in \QQ_{\ge 0}$. Then $\Delta=F+\sum_{i=1}^rd_iE_i$ with $c_i\le d_i$ for all $i$ and $d_i=c_i$ for $s+1\le i\le r$. Further, for $1\le j\le s$, we have
$0=(\Delta\cdot E_j)=\sum_{i=1}^sd_i(E_i\cdot E_j)+g_j$ where $g_j=(F\cdot E_j)+\sum_{i=s+1}^rc_i(E_i\cdot E_j)\in \QQ$. Since the $s\times s$ matrix $\left((E_i\cdot E_j)\right)_{1\le i,j\le s}$ is negative definite, and thus is nonsingular, we have that $d_1,\ldots,d_s\in \QQ$. Thus $\Delta$ and $B$ are $\QQ$-divisors.
\end{proof}

\begin{Remark}\label{Remarkv1} From 3) of the   conclusions of Lemma \ref{Lemma10},  we deduce that if $D_1\le D_2$ are effective 
$\RR$-divisors such that $D_2-D_1$ has  exceptional support and the respective anti-nef parts of their Zariski decompositions are $\Delta_1$ and $\Delta_2$, then $\Delta_1\le\Delta_2$.
\end{Remark}

\begin{Lemma}\label{LemmaV2}
Suppose that $D$ is an effective $\RR$-divisor on $X$ and $\Delta=D+B$ is the Zariski decomposition of $D$. Then for all $n\in \NN$, 
$$ 
\Gamma(X,\mathcal O_X(-\lceil nD\rceil))=\Gamma(X,\mathcal O_X(-\lceil n\Delta\rceil)).
$$
\end{Lemma}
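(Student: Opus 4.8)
The plan is to prove the two inclusions of $\Gamma(X,\mathcal O_X(-\lceil nD\rceil))$ and $\Gamma(X,\mathcal O_X(-\lceil n\Delta\rceil))$ separately, with only one requiring work. Since $B$ is effective we have $nD\le n\Delta$, hence $\lceil nD\rceil\le\lceil n\Delta\rceil$ componentwise, so $\mathcal O_X(-\lceil n\Delta\rceil)\subseteq\mathcal O_X(-\lceil nD\rceil)$ as subsheaves of the constant sheaf $K$; taking global sections gives the inclusion $\Gamma(X,\mathcal O_X(-\lceil n\Delta\rceil))\subseteq\Gamma(X,\mathcal O_X(-\lceil nD\rceil))$ for free. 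The content is the reverse inclusion.

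For that, I would fix $n$ and take a nonzero $f\in\Gamma(X,\mathcal O_X(-\lceil nD\rceil))$; the zero section lies in both groups, so there is nothing to check for $f=0$. Since $\lceil nD\rceil$ is effective and $\Gamma(X,\mathcal O_X)=R$ by Remark \ref{RemarkR3}, $f$ lies in $R$ and its divisor $(f)$ is an effective integral divisor with $(f)\ge\lceil nD\rceil\ge nD$. It suffices to prove $(f)\ge n\Delta$: because $(f)$ is integral and $\lceil n\Delta\rceil$ is the least integral divisor that is $\ge n\Delta$, this forces $(f)\ge\lceil n\Delta\rceil$, that is, $f\in\Gamma(X,\mathcal O_X(-\lceil n\Delta\rceil))$.

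To prove $(f)\ge n\Delta$, I would write $(f)=F''+\sum_{i=1}^r b_iE_i$ and $nD=F+\sum_{i=1}^r a_iE_i$, where $F''$ and $F$ are effective $\RR$-divisors whose supports contain no exceptional prime divisor and $a_i,b_i\in\RR_{\ge 0}$. From $(f)\ge nD$ we get $F''\ge F$ and $b_i\ge a_i$ for all $i$. Set $V=F+\sum_{i=1}^r b_iE_i$; then $V$ is effective, $V\le(f)$ (indeed $(f)-V=F''-F\ge 0$), and $V-nD=\sum_i(b_i-a_i)E_i$ is effective with exceptional support. The key point is that $V$ is anti-nef: since $(f)\sim 0$, the divisor $(f)$ is numerically equivalent to $0$, so $((f)\cdot E_j)=0$ for every exceptional $E_j$, whence
$$
(V\cdot E_j)=\big((f)-(F''-F)\big)\cdot E_j=-\big((F''-F)\cdot E_j\big)\le 0
$$
for all $j$, using that $F''-F$ is effective and contains no exceptional prime in its support, so $\big((F''-F)\cdot E_j\big)=\deg\big(\mathcal O_X(F''-F)\otimes\mathcal O_{E_j}\big)\ge 0$. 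Finally, by the uniqueness in Lemma \ref{Lemma10}, $n\Delta=nD+nB$ is the Zariski decomposition of $nD$ (as $n\Delta$ is anti-nef, $nB$ has exceptional support, and $(n\Delta\cdot E)=0$ for every component $E$ of $nB$); so part 3) of Lemma \ref{Lemma10} applied to $nD$ says that $n\Delta$ is the minimal effective anti-nef $\RR$-divisor $W$ with $W-nD$ effective of exceptional support. Since $V$ is such a $W$, we conclude $V\ge n\Delta$, hence $(f)\ge V\ge n\Delta$, as required.

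The only genuine obstacle is the anti-nef claim for $V$, which rests on two elementary facts recalled in Section \ref{SecRes}: principal divisors are numerically trivial, and an effective divisor with no exceptional component in its support meets each exceptional curve $E_j$ nonnegatively. Everything else is bookkeeping with ceilings together with the minimality statement already established in Lemma \ref{Lemma10}.
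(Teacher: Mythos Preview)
Your proof is correct, but it proceeds differently from the paper's. For the nontrivial inclusion $\Gamma(X,\mathcal O_X(-\lceil nD\rceil))\subseteq\Gamma(X,\mathcal O_X(-\lceil n\Delta\rceil))$, the paper splits $(f)-nD=A+C$ according to whether components lie in the support $S$ of $B$ or not, uses $((f)\cdot E)=0$ and $(\Delta\cdot E)=0$ for $E\in S$ to obtain $((C-nB)\cdot E)\le 0$ for all $E\in S$, and then invokes the negative definiteness of the intersection form on $S$ (via \cite[Lemma 7.1]{Z}) to conclude $C-nB\ge 0$ directly. You instead build an auxiliary anti-nef divisor $V\le (f)$ with $V-nD$ effective and exceptional, and then appeal to the minimality statement 3) of Lemma~\ref{Lemma10} to get $V\ge n\Delta$. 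Your route is a bit more conceptual, since it absorbs the negative-definiteness computation into the already-established minimality of $\Delta$; the paper's route is more hands-on and uses only the specific orthogonality condition $(\Delta\cdot E)=0$ on the support of $B$ rather than the full uniqueness/minimality package.
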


\begin{proof} Suppose that $f\in \Gamma(X,\mathcal O_X(-\lceil n\Delta\rceil))$. Then $(f)-\lceil n\Delta\rceil\ge 0$. Writing  $n\Delta=\lceil n\Delta\rceil -G$ with $G\ge 0$, we have $-n\Delta=G-\lceil n\Delta\rceil$.
From 
$$
-nD=-n\Delta+nB=-\lceil n\Delta\rceil+(G+nB)
$$
and the fact that $G+nB\ge 0$, we have that $(f)-nD\ge 0$ so that $f\in \Gamma(X,\mathcal O_X(-\lceil nD\rceil))$.

Let $S$ be the set of prime divisors  in the support of $B$. Suppose that 
$$
f\in \Gamma(X,\mathcal O_X(-\lceil nD\rceil )).
$$
 Then $(f)-nD\ge 0$. Write $(f)-nD=A+C$ where $A$ and $C$ are effective $\RR$-divisors on $X$, no components of $A$ are in $S$ and all components of $C$ are in $S$. We have that $(f)-n\Delta=A+(C-nB)$. If $E\in S$ then 
$$
(E\cdot(A+(C-nB)))=(E\cdot((f)-n\Delta))=0
$$
which implies $(E\cdot (C-nB))=-(E\cdot A)\le 0$. The intersection matrix of the curves in $S$ is negative definite since it is so for the set of all exceptional curves, so $C-nB\ge 0$ (by \cite[Lemma 7.1]{Z}). Thus $(f)-n\Delta\ge 0$ which implies $(f)-\lceil n\Delta\rceil\ge 0$ since $(f)$ is an integral divisor.  Thus $f\in \Gamma(X,\mathcal O_X(-\lceil n\Delta\rceil))$.  
\end{proof}

\section{Nef divisors}\label{SecNef}  
In this section we extend to our relative situation $X\rightarrow \mbox{Spec}(R)$ some theorems proven by Zariski in \cite{Z} for projective  surfaces over an algebraically closed field. We stay as close as possible to Zariski's original proof, although some parts require modification. In \cite{LA}, and the references in that book, a theory of nef divisors on nonsingular projective varieties of arbitrary dimension  over an algebraically closed field of characteristic zero is derived.  Much of this theory can be extended to the relative situation, over $\mbox{Spec}(A)$, where the local ring $A$ is normal and essentially of finite type over an algebraically closed field of characteristic zero, or even of positive characteristic. 

We continue with our ongoing assumptions that 
$R$ is a two dimensional excellent normal local ring with quotient field $K$, maximal ideal $m_R$ and residue field $k$, and that $\pi:X\rightarrow \mbox{Spec}(R)$ is a resolution of singularities  such that  
the exceptional prime divisors $E_1,\ldots,E_r$ of $\pi$ are all nonsingular.  

\begin{Proposition}\label{Prop20} Let $\Delta$ be an effective anti-nef  divisor on $X$. For $n\ge 0$, let $B_n$ be the fixed component of $-n\Delta$.  Suppose that $E$  is a prime divisor which is in the support of the fixed component $B_n$ of $-n\Delta$ for infinitely many $n$. Then $E$ is exceptional for $\pi$ and $(\Delta\cdot E)=0$.
\end{Proposition}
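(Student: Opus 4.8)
The claim that $E$ is exceptional for $\pi$ is immediate from Lemma \ref{Lemma19}, since $E$ lies in the support of the fixed component of $-(n\Delta)$ for some $n$ and $n\Delta$ is effective; so the real content is that $(\Delta\cdot E)=0$. If $\Delta=0$ then $\Gamma(X,\mathcal O_X(-n\Delta))=R$ contains $1$, which forces every $B_n=0$ and makes the hypothesis vacuous; so I assume $\Delta\neq 0$, whence $(\Delta\cdot E)\le 0$ because $\Delta$ is anti-nef, and it remains to rule out $(\Delta\cdot E)<0$. Writing $E=E_k$ and recalling from Section \ref{SecRes} that $M_k=\{n: E_k\text{ is not a component of }B_n\}$ is a numerical semigroup with $n\in M_k\iff h_k\mid n$ for $n\gg 0$, the hypothesis means precisely that $\NN\setminus M_k$ is infinite, i.e. $M_k=\{0\}$ or $h_k\ge 2$. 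So it suffices to prove that $(\Delta\cdot E)<0$ forces $h_k=1$.

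Assuming $(\Delta\cdot E)<0$, the plan is to replace $\Delta$ by a genuinely anti-ample divisor without changing its coefficient along $E$. Let $\mathcal Z=\{i:(\Delta\cdot E_i)=0\}$, so $k\notin\mathcal Z$. The matrix $((E_i\cdot E_j))_{i,j\in\mathcal Z}$ is a principal submatrix of a negative definite matrix, hence negative definite, so there is an effective divisor $N=\sum_{i\in\mathcal Z}b_iE_i$ with nonnegative integer coefficients and $(N\cdot E_i)<0$ for all $i\in\mathcal Z$ (as in the proofs of Proposition \ref{PropAmp} and Lemma \ref{Lemma200}); note $(N\cdot E_j)\ge 0$ for $j\notin\mathcal Z$ since then $E_j$ is not a component of $N$. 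I then fix an integer $q$ with $q\,|(\Delta\cdot E_j)|>(N\cdot E_j)$ for every $j\notin\mathcal Z$ (finitely many conditions, possible as each $|(\Delta\cdot E_j)|\ge 1$) and set $G=q\Delta+N$. Then $G$ is an effective integral divisor with $(G\cdot E_i)=(N\cdot E_i)<0$ for $i\in\mathcal Z$ and $(G\cdot E_j)=q(\Delta\cdot E_j)+(N\cdot E_j)<0$ for $j\notin\mathcal Z$, so $-G$ is ample by Proposition \ref{PropAmp}; moreover $G\ge q\Delta$, and the coefficient of $E$ in $G$ equals $q$ times the coefficient of $E$ in $\Delta$ because $E_k$ is not a component of $N$.

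Now, since $-G$ is ample and $\pi$ is projective, $\mathcal O_X(-mG)$ is generated by global sections for all $m\ge m_1$. For such $m$, global generation at the generic point of $E$ produces $h\in\Gamma(X,\mathcal O_X(-mG))$ with $\nu_E(h)$ equal to the coefficient of $E$ in $mG$, i.e. $mq$ times the coefficient of $E$ in $\Delta$. Since $mG\ge mq\Delta$ we have $\Gamma(X,\mathcal O_X(-mG))\subseteq\Gamma(X,\mathcal O_X(-mq\Delta))$, so $h\in\Gamma(X,\mathcal O_X(-mq\Delta))$, and the effective divisor $(h)-mq\Delta$ has coefficient $0$ along $E$; hence $h\notin\Gamma(X,\mathcal O_X(-mq\Delta-E))$, so $E_k$ is not a component of $B_{mq}$, that is $mq\in M_k$. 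This holds for all $m\ge m_1$, so $h_k\mid mq$ for all large $m$, whence $h_k\mid q$; repeating the construction with $q+1$ in place of $q$ (the same $N$ still works) gives $h_k\mid q+1$, hence $h_k\mid\gcd(q,q+1)=1$ and $h_k=1$, a contradiction. Therefore $(\Delta\cdot E)=0$.

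The step requiring care is the middle one: for the merely anti-nef $\Delta$ the filtration $\{\Gamma(X,\mathcal O_X(-n\Delta))\}$ need not be Noetherian and $\mathcal O_X(-m\Delta)$ need not be globally generated for any $m$, so one cannot directly produce sections of prescribed order along $E$. Perturbing $q\Delta$ by an effective divisor supported on the null curves $\{E_i\}_{i\in\mathcal Z}$ — which is permissible precisely because that configuration is negative definite — restores ampleness at the expense only of curves disjoint from $E$, after which Serre's theorem supplies the sections. (Alternatively one could contract the null curves to a scheme $X'\to\mbox{Spec}(R)$ on which $\Delta$ descends to an anti-ample divisor, and transfer the conclusion back along $X\to X'$ using $f_*\mathcal O_X=\mathcal O_{X'}$; the contraction exists because the intersection matrix of the null curves is negative definite.)
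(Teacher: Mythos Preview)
Your proof is correct and takes a genuinely different route from the paper's. The paper argues cohomologically: choosing an effective $D\sim -\Delta$, it peels off the components $F_i$ of $D$ one at a time via short exact sequences, obtaining a recursion of the shape $h^1(\mathcal O_X(nD))\le h^1(\mathcal O_X((n-1)D))+n(\Delta\cdot E)+c$ whenever $E$ lies in the base locus of $-n\Delta$; if $(\Delta\cdot E)<0$ and this happens for a positive density of $n$, the recursion drives $h^1$ negative, a contradiction. Your argument instead perturbs $q\Delta$ by an effective exceptional divisor $N$ supported on the null curves $\{E_i:(\Delta\cdot E_i)=0\}$ to produce an anti-ample $G=q\Delta+N$ with the same $E$-coefficient as $q\Delta$, and then uses global generation of $\mathcal O_X(-mG)$ to manufacture sections of $\mathcal O_X(-mq\Delta)$ of exact order along $E$; varying $q$ forces $h_k\mid q$ and $h_k\mid q+1$.

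Your approach is more elementary and geometric for this statement: it avoids the Riemann--Roch bookkeeping and the delicate $h^1$ recursion entirely, needing only negative definiteness of the restricted intersection form (to build $N$), Proposition~\ref{PropAmp}, and Serre vanishing. The paper's cohomological route, on the other hand, is doing double duty: the same circle of $h^1$ estimates is reused in Propositions~\ref{Prop50} and~\ref{Prop21} and in Corollary~\ref{Cor22}, so the paper is building machinery it needs anyway. One small remark: your citation of Proposition~\ref{PropAmp} and Lemma~\ref{Lemma200} for the existence of $N$ is slightly loose---those results concern the full exceptional configuration---but the same linear-algebra argument (solve $M_{\mathcal Z}b=-\mathbf 1$ and check $b>0$ from negative definiteness and nonnegativity of the off-diagonal entries, as in \cite[Lemma~7.1]{Z}) works verbatim for any subset $\mathcal Z$. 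The parenthetical contraction alternative is morally right but would need care since $X'$ is only normal and $\Delta$ need only be $\QQ$-Cartier there.
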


\begin{proof}  By Lemma \ref{Lemma19}, $E$ is exceptional. We will assume that $(\Delta\cdot E)<0$ and derive a contradiction.
 Since $\Gamma(X,\mathcal O_X(-\Delta))\ne 0$ there exists an effective divisor $D$ on $X$ such that $D\sim -\Delta$.
Write $D = U+F_1+\cdots+F_s$ where $U$ is an effective divisor with no exceptional divisors in its support and $F_1=E,F_2,\ldots,F_s$ are prime exceptional divisors.  Let $\Delta_i=U+
F_1+\cdots+F_i$ for $0\le i\le s$. 

We have short exact sequences 
$$
0\rightarrow \mathcal O_X(nD-\Delta_0)\rightarrow \mathcal O_X(nD)\rightarrow \mathcal O_X(nD)\otimes\mathcal O_{\Delta_0}\rightarrow 0.
$$
There exists a  very ample effective divisor $H$ on $X$ which contains no exceptional prime divisors in its support and whose support
is disjoint from $\Delta_0$ by \cite[Theorem III.5.2]{H} since $\Delta_0$ intersects $\pi^{-1}(m_R)$ in only  a finite  number of closed points and so $\Delta_0$ is a closed subscheme of the affine scheme $X\setminus V(H)$ and thus $\Delta_0$ is an affine scheme.
We thus  have that $H^1(\Delta_0,\mathcal O_X(-nD)\otimes\mathcal O_{\Delta_0})=0$ for all $n$ and so 
\begin{equation}\label{eq35}
h^1(\mathcal O_X(nD))\le h^1(\mathcal O_X(nD-\Delta_0))
\end{equation}
for all $n\in\NN$.

For $i<s$ and $n\in\NN$, we have short exact sequences
$$
0\rightarrow \mathcal O_X(nD-\Delta_i-F_{i+1})\rightarrow \mathcal O_X(nD-\Delta_i)\rightarrow \mathcal O_X(nD-\Delta_i)\otimes \mathcal O_{F_{i+1}}\rightarrow 0.
$$
Thus 
$$
h^1(\mathcal O_X(nD-\Delta_i))\le h^1(\mathcal O_X(nD-\Delta_{i+1})+h^1(F_{i+1},O_X(nD-\Delta_i)\otimes \mathcal O_{F_{i+1}}).
$$
$(D\cdot F_{i+1})=(-\Delta\cdot F_{i+1})\ge 0$ implies that there exists $\sigma_i>0$ such that 
$h^1(F_{i+1},O_X(nD-\Delta_i)\otimes \mathcal O_{F_{i+1}})\le \sigma_i$ for all $n\in \NN$ by Lemma \ref{Lemma25}, so
\begin{equation}\label{eq30}
h^1(\mathcal O_X(nD-\Delta_i))\le h^1(\mathcal O_X(nD-\Delta_{i+1}))+\sigma_i
\end{equation}
for all $i\ge 0$ and $n\in \NN$.

Now consider the exact sequences
$$
0\rightarrow \mathcal O_X(nD-\Delta_0-F_1)\rightarrow \mathcal O_X(nD-\Delta_0)\rightarrow 
\mathcal O_X(nD-\Delta_0)\otimes \mathcal O_{F_1}\rightarrow 0
$$
for $n\in\NN$.
Since $(F_1\cdot D)=(F_1\cdot -\Delta)>0$ we have that $H^1(F_1,\mathcal O_X(nD-\Delta_i)\otimes \mathcal O_{F_1})=0$ for $n\gg0$ by (\ref{eq43}).  From the natural inclusion $\mathcal O_X(nD-\Delta_0)\rightarrow \mathcal O_X(nD)$ we deduce that $F_1$ is in the support of the fixed locus of $nD-\Delta_0$ if $F_1$ is in the support of the fixed locus of $-n\Delta$. Thus for $n$ such that $F_1$ is a component of the base locus $B_n$ of $-n\Delta$, the image of $H^0(X,\mathcal O_X(nD-\Delta_0))$ in $H^0(F_1,\mathcal O_X(nD-\Delta_i)\otimes \mathcal O_{F_1})$ is zero. Thus
$$
h^1(\mathcal O_X(nD-\Delta_0))=h^1(\mathcal O_X(nD-\Delta_0-F_1))-\chi(\mathcal O_{F_1}(nD-\Delta_0)\otimes\mathcal O_{F_1})
$$
so that by the Riemann  Roch theorem (\ref{eq44}),
\begin{equation}\label{eq31}
h^1(\mathcal O_X(nD-\Delta_0))=h^1(\mathcal O_X(nD-\Delta_0-F_1))+n(\Delta\cdot F_1)+(\Delta_0\cdot F_1)+p_a(F_1)-1.
\end{equation}

As explained before the statement of Lemma \ref{Lemma19}, there exists a positive integer $h$ such that for $n\gg0$, $F_1$ is a component of $B_n$ if $h\not|n$.

By (\ref{eq35}) and (\ref{eq30}), there exists a constant $c>0$ such that 
$$
h^1(\mathcal O_X(nD))\le h^1(\mathcal O_X((n-1)D))+c
$$
for all $n\in \ZZ_{>0}$ and for all $n\gg 0$ such that $h\not| n$ we have  by (\ref{eq35}), (\ref{eq30}) and (\ref{eq31}) that
$$
h^1(\mathcal O_X(nD))\le h^1(\mathcal O_X((n-1)D)+n(\Delta\cdot F_1)+c.
$$
Thus we have $h^1(\mathcal O_X(nD))<0$ for $n\gg0$ since we have assumed that  $(\Delta\cdot F_1)<0$. But this is impossible, giving a contradiction and so $(\Delta\cdot F_1)=0$.
\end{proof}

\begin{Proposition}\label{Prop50}
Let $\Gamma$ be an effective divisor on $X$ such that $-\Gamma$ has no fixed component. Then
\begin{enumerate}
\item[1)] $\mathcal O_X(-n\Gamma)$ is generated by global sections for all $n\gg0$.
\item[2)] There exists $s\in \ZZ_{>0}$ such that $h^1(X,\mathcal O_X(-n\Gamma))<s$ for all $n\in\NN$.
\end{enumerate}
\end{Proposition}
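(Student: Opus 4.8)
The plan is the following. I begin with two reductions. First, if $-\Gamma$ has no fixed component then the same is true of $-n\Gamma$ for every $n\ge 1$: given a prime divisor $F$, pick $f\in\Gamma(X,\mathcal O_X(-\Gamma))$ with $\nu_F(f)=\nu_F(\Gamma)$; then $f^n\in\Gamma(X,\mathcal O_X(-n\Gamma))$ realizes $\nu_F(f^n)=n\nu_F(\Gamma)$. Second, $\Gamma$ is already anti-nef: if $\Delta=\Gamma+B$ is the Zariski decomposition of Lemma \ref{Lemma10}, then Lemma \ref{LemmaV2} gives $\Gamma(X,\mathcal O_X(-\Gamma))=\Gamma(X,\mathcal O_X(-\Gamma-B))$, so the effective divisor $B$ lies in the fixed component of $-\Gamma$ and hence $B=0$. (We assume $\Gamma\ne 0$; by Lemma \ref{RemarkAN1} the support of $\Gamma$ then meets every exceptional prime divisor.)

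I would establish 2) first, independently of 1). Let $S=\bigoplus_{n\ge0}\Gamma(X,\mathcal O_X(-n\Gamma))$ (a graded ring, not necessarily Noetherian) and, for $1\le i\le r$, let $P_i=\bigoplus_{n\ge0}\Gamma(X,\mathcal O_X(-n\Gamma-E_i))$. Since $\nu_{E_i}$ is a valuation, each $P_i$ is a homogeneous prime ideal of $S$, and $P_i\ne S$ since $E_i$ is not in the fixed component of $-\Gamma$. Homogeneous prime avoidance (which requires no hypothesis on the residue field $k$) then yields $N\in\ZZ_{>0}$ and $g\in\Gamma(X,\mathcal O_X(-N\Gamma))$ with $g\notin P_1\cup\cdots\cup P_r$, i.e.\ $\nu_{E_i}(g)=N\nu_{E_i}(\Gamma)$ for all $i$; thus $C:=(g)-N\Gamma$ is effective with no exceptional component. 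Consequently $\pi|_C\colon C\to\mbox{Spec}(R)$ is proper with finite fibres, hence finite onto its (affine) image, so $C$ is affine and $H^1(C,\mathcal F)=0$ for every coherent sheaf $\mathcal F$ on $C$. Multiplication by $g$ gives exact sequences $0\to\mathcal O_X(-(n-N)\Gamma)\to\mathcal O_X(-n\Gamma)\to\mathcal O_X(-n\Gamma)\otimes\mathcal O_C\to0$ for $n\ge N$, whence $h^1(\mathcal O_X(-n\Gamma))\le h^1(\mathcal O_X(-(n-N)\Gamma))$; iterating bounds $h^1(\mathcal O_X(-n\Gamma))$ by $\max_{0\le j<N}h^1(\mathcal O_X(-j\Gamma))$, a finite number by Remark \ref{RemarkR3}.

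For 1), the computation in Lemma \ref{Lemma19} shows that $\mathcal O_X(-n\Gamma)$ is generated by global sections at the generic point of $X$ and at the generic point of every non-exceptional prime divisor — that is, at every point of $X\setminus\pi^{-1}(m_R)$ — while its fixed component is empty; hence the base locus of $|-n\Gamma|$ is a finite set of closed points of $\pi^{-1}(m_R)=\bigcup_iE_i$, and it suffices to show that for $n\gg0$ no point of $\bigcup_iE_i$ is a base point. If $(\Gamma\cdot E_i)=0$, then $\mathcal O_X(-n\Gamma)\otimes\mathcal O_{E_i}$ has degree $0$, while the absence of a fixed component makes $\Gamma(X,\mathcal O_X(-n\Gamma))\to H^0(E_i,\mathcal O_X(-n\Gamma)\otimes\mathcal O_{E_i})$ nonzero; a degree-$0$ invertible sheaf on the integral projective curve $E_i$ possessing a nonzero section is trivial, and such a section is nowhere vanishing, so it generates $\mathcal O_X(-n\Gamma)$ at every point of $E_i$. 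Thus the $\Gamma$-trivial exceptional curves are settled for all $n\ge1$, and this forced triviality is exactly what excludes the Zariski pathology of a non-torsion degree-$0$ restriction. If $(\Gamma\cdot E_i)<0$, then $\deg(\mathcal O_X(-n\Gamma)\otimes\mathcal O_{E_i})=-n(\Gamma\cdot E_i)\to\infty$, so by the Riemann--Roch formulas of Section \ref{SecRR} the sheaf $\mathcal O_X(-n\Gamma)\otimes\mathcal O_{E_i}$ is globally generated on $E_i$, with $H^1(E_i,\cdot)=0$, once $n\gg0$.

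The main obstacle is to promote this last statement to generation of $\mathcal O_X(-n\Gamma)$ along $E_i$ itself for the $\Gamma$-negative curves, i.e.\ essentially to lift global sections of $\mathcal O_X(-n\Gamma)\otimes\mathcal O_{E_i}$ to $X$; one cannot simply invoke $H^1(X,\mathcal O_X(-n\Gamma-E_i))=0$, which fails in general (for instance when $R$ does not have a rational singularity). I would instead argue as in Zariski's original treatment and in the proof of Proposition \ref{Prop20}: supposing some closed point $q_0$ were a base point of $|-n\Gamma|$ for arbitrarily large $n$, blow up $q_0$ together with the infinitely near points through which the (finite) base locus migrates, and run the cohomology estimate of Proposition \ref{Prop20} on the resulting surface — peeling off prime divisors one at a time, bounding the error terms by Lemma \ref{Lemma25} (the degrees that occur being bounded below because $\Gamma$ is anti-nef), and using that $q_0$ lying in the base locus forces the image of the relevant global sections on the relevant curve to vanish, so that the Riemann--Roch identity used in the proof of Proposition \ref{Prop20} drives $h^1$ of an invertible sheaf negative — a contradiction; Proposition \ref{Prop20} is what keeps the divisorial base loci on these blow-ups from persisting, so the process terminates. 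Once no point of $\bigcup_iE_i$ is a base point for $n\gg0$, generation of $\mathcal O_X(-n\Gamma)$ everywhere on $X$ follows, which is 1). I expect this final paragraph — making Zariski's blow-up and Riemann--Roch bookkeeping go through in the relative excellent setting — to contain essentially all of the difficulty.
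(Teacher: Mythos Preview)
Your proof of 2) is correct and is genuinely different from the paper's: the paper first proves 1) and then bounds $h^1$ via the morphism $\phi:X\to\mathbb P^r_R$ induced by global sections of $\mathcal O_X(-N\Gamma)$, using the Leray spectral sequence and the projection formula. Your argument --- prime avoidance in the graded ring $S$ to produce $g\in\Gamma(X,\mathcal O_X(-N\Gamma))$ whose divisor $C=(g)-N\Gamma$ has no exceptional support, hence $C$ is affine, so $h^1(\mathcal O_X(-n\Gamma))\le h^1(\mathcal O_X(-(n-N)\Gamma))$ --- is more elementary and, as you note, independent of 1). Your treatment of the $(\Gamma\cdot E_i)=0$ curves in 1) is also correct and clean (a section of a degree-$0$ bundle on an integral curve is nowhere vanishing), and the paper does not isolate this case.

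The gap is in 1) for base points lying only on $\Gamma$-negative exceptional curves. Your proposed mechanism --- blow up a persistent base point $q_0$ and run the Proposition~\ref{Prop20} estimate on the new surface --- does not produce a contradiction as stated. On the blowup $X'\to X$ with exceptional curve $E'$, the pullback $\Gamma'=\pi'^*\Gamma$ is still anti-nef, but $(\Gamma'\cdot E')=0$ by the projection formula; so the hypothesis of Proposition~\ref{Prop20} that drives $h^1$ to $-\infty$ (namely a strictly negative intersection number) is absent, and the conclusion of Proposition~\ref{Prop20} --- that $(\Gamma'\cdot E')=0$ --- is already true and tells you nothing. Iterating blowups only produces more curves with intersection number zero. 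The paper's proof of 1) is entirely different: it introduces an auxiliary very ample divisor $-C$ and an effective $H\sim -C$ disjoint from the base locus $\Omega$, studies the bigraded module $M=\bigoplus_{i,j}A_{i,j}$ of images of $\Gamma(X,\mathcal O_X(-i\Gamma-jC))$ in $\Gamma(H,\cdot)$, shows $M$ is finite over the subalgebra generated in bidegrees $(1,0)$ and $(0,1)$ (using that $H$ is affine), applies Zariski's stabilization lemma \cite[Lemma~4.3]{Z} to get $A_{i,j}=A_{i-1,j}A_{1,0}$ for $i\ge N$, and then does a descending induction on $j$ using Nakayama at the points of $\Omega$ to descend global generation from $\mathcal O_X(-i\Gamma-jC)$ (known for $j\gg0$ by ampleness of $-C$) down to $\mathcal O_X(-i\Gamma)$. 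This bigraded finiteness/descent argument \emph{is} Zariski's original treatment; it does not involve blowing up base points.
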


\begin{proof} The set of base points 
$$
\Omega=\{p\in X\mid \mathcal O_X(-\Gamma)_p\mbox{ is not generated by global sections}\}
$$
 of $\Gamma(X,\mathcal O_X(-\Gamma))$ is a finite set of closed points, which are necessarily contained in the exceptional fiber of $\pi$. 
Let $C\ge 0$ be an effective  divisor on $X$ such that $-C$ is very ample for $\pi$. There exists an integer $m>0$ such that there exists an effective divisor $H\sim -mC$ with no exceptional components in its support and such that $\Omega$ is disjoint from its support (by \cite[Theorem III.5.2]{H}).  After replacing $C$ with this multiple $mC$ we may assume that $H\sim -C$. Let $f\in K$, the quotient field of $R$, be such that $(f)-C=H$.  We may regard the effective divisor $H$ as a closed subscheme of $X$.

We have a short exact sequence
$$
0\rightarrow \mathcal O_X(C)\stackrel{f}{\rightarrow} \mathcal O_X\rightarrow \mathcal O_H\rightarrow 0
$$
and tensoring with $\mathcal O_X(-i\Gamma-jC)$ we have short exact sequences
\begin{equation}\label{eq60}
0\rightarrow \mathcal O_X(-i\Gamma-(j-1)C)\stackrel{f}{\rightarrow}\mathcal O_X(-i\Gamma-jC)\rightarrow \mathcal O_X(-i\Gamma-jC)\otimes\mathcal O_H\rightarrow 0.
\end{equation}
For $i,j\ge 0$, let $A_{i,j}$ be the natural image of $\Gamma(X,\mathcal  O_X(-i\Gamma-jC))$ in 
$$
\Gamma(H,\mathcal O_X(-i\Gamma-jC)\otimes \mathcal O_H),
$$
 upon taking global sections of (\ref{eq60}). Since the base points
of $\Gamma(X,\mathcal  O_X(-i\Gamma-jC))$ are a subset of $\Omega$ and so are disjoint from $H$, we have that, for all $i,j\ge 0$,  $A_{i,j}\mathcal O_{H,q}=\mathcal O_X(-i\Gamma-jC)_q$ for all $q\in H$. 

There exists $n\in \ZZ_{>0}$ such that there exists an effective  divisor $G$ on $X$ such that $G\sim -nC$, the support of $G$ contains no exceptional components of $\pi$ and $\mbox{sup}(H)\cap \mbox{sup}(G)\cap \mbox{sup}(\pi^{-1}(m_R))=\emptyset$ (by \cite[Theorem III.5.2]{H}). We may regard $G$ as a closed subscheme of $X$. Thus $H$ is a closed subscheme of the affine scheme $X\setminus G$ and so $H$ is affine, say $H=\mbox{Spec}(S)$. The restriction of $\pi$ to $H$ is determined by a ring homomorphism $R\rightarrow S$. Now $S=\Gamma(H,\mathcal O_H)$ is a finitely generated $R$-module  since $\pi$ is a projective morphism (by \cite[Corollary II.5.20]{H}).
As explained in \cite[Corollary II.5.5]{H}, since $S$ is Noetherian, the functor $M\rightarrow \tilde M$ gives an equivalence of categories between the category of finitely generated $S$-modules and the category of coherent $\mathcal O_{\mbox{Spec}(S)}$-modules, with inverse $\mathcal F\mapsto \Gamma(\mbox{Spec}(S),\mathcal F)$.

In particular, letting $B_{i,j}=\Gamma(H,\mathcal O_X(-i\Gamma-jC)\otimes \mathcal O_H)$ for $i,j\ge 0$, we have that 
$\mathcal O_X(-i\Gamma-jC)\otimes \mathcal O_H=\widetilde{B_{i,j}}$. We also have that $B_{i,j}$ is the tensor product over $S$ of $i$ copies of $B_{1,0}$ and $j$ copies of $B_{0,1}$ (\cite[Proposition II.5.2]{H}).

We have that the ring $A_{0,0}$ is a quotient of $\Gamma(X,\mathcal O_X)=R$ since $\pi$ is proper birational and $R$ is normal. Let $A_{0,0}[t_1,t_2]$ be a  polynomial ring over $A_{0,0}$, which is bigraded by specifying that $\deg(a)=(0,0)$ if $a\in A_{0,0}$, $\deg(t_1)=(1,0)$ and $\deg(t_2)=(0,1)$. Let $M$ be the  bigraded $A_{0,0}$-subalgebra $M:=\sum_{i,j\ge 0} A_{i,j}t_1^it_2^j$ of $A_{0,0}[t_1,t_2]$. Similarly, let $B$ be the bigraded $S$-subalgebra $B:=\bigoplus_{i,j\ge 0}B_{i,j}t_1^it_2^j$ of $S[t_1,t_2]$.

We have a natural inclusion of graded rings $M\rightarrow B$.

Since $H$ is disjoint from $\Omega$ we have that 
$$
A_{1,0}^iA_{0,1}^jS_q=A_{ij}S_q=\mathcal O_X(-i\Gamma-jA)\otimes\mathcal O_{H,q}=(B_{i,j})_q
$$
 for all $q\in H$ and $i,j\ge 0$. Thus 
 \begin{equation}\label{eq61}
 A_{1,0}^iA_{0,1}^jS=B_{i,j}\mbox{ for all $i,j\ge 0$}.
 \end{equation}
   Let $A$ be the bigraded $A_{0,0}$-subalgebra  $A:= A_{0,0}[A_{1,0}t_1,A_{0,1}t_2]$ of $M$. Now we have a natural surjection $A_{1,0}^iA_{0,1}^j\otimes_RS\rightarrow B_{i,j}$ for all $i,j\ge 0$ by (\ref{eq61}).
 Thus the natural homomorphism $A\otimes_RS\rightarrow B$ is surjective. Since $S$ is a finitely generated $R$-module, we have that $B$ is a finitely generated bigraded $A$-module. Since $A\subset M\subset B$ and $A$ is Noetherian, we have that $M$ is also a finitely generated $A$-module.

 By \cite[Lemma 4.3]{Z}, since $A$ is generated in bidegrees $(1,0)$ and $(0,1)$,
  and $M$ is a finitely generated bigraded $R$-module, there exists $N\in\ZZ_{>0}$ such that 
 \begin{equation}\label{eq62}
 A_{i,j}=A_{i,j-1}A_{0,1}\mbox{ whenever $j\ge N$ and $i\ge 0$ is arbitrary}
 \end{equation}
 and 
 \begin{equation}\label{eq63}
 A_{i,j}=A_{i-1,j}A_{1,0}\mbox{ whenever $i\ge N$ and $j\ge 0$ is arbitrary.}
 \end{equation} 
 Thus taking global sections in the short exact sequences (\ref{eq60}), and applying (\ref{eq63}), we have that if $i\ge N$ and $j\ge 0$, then
 \begin{equation}\label{eq64}
 \Gamma(X,\mathcal O_X(-i\Gamma-jC))=\Gamma(X,\mathcal O_X(-i\Gamma-(j-1)C))f+\Gamma(X,\mathcal O_X(-(i-1)\Gamma-jC))\Gamma(X,\mathcal O_X(-\Gamma)).
 \end{equation}
 Since $-C$ is ample, for fixed $i$, $\mathcal O_X(-i\Gamma-jC)$ is generated by global sections for all $j\gg 0$ (by \cite[Theorem II.5.17]{H}). Let $i$ be a fixed integer $\ge N$ and let $j>0$ be such that $\mathcal O_X(-i\Gamma-jC)$ is generated by global sections.  
 
 The only points  $q\in X$ where it is possible for  $\mathcal O_X(-i\Gamma-(j-1)C)_q$ to  not be generated by global sections are the points of $\Omega$. Suppose that $q\in\Omega$.  Thus $q$ is not in the support of $H=(f)-C$, and so $f=0$ is a local equation of $C$ at $q$ and $f\mathcal O_{X,q}=\mathcal O_X(-C)_q$. Further, since $q\in \Omega$, 
 $\Gamma(X,\mathcal O_X(-\Gamma))O_{X,q}\subset m_q\mathcal O_{X}(-\Gamma)$ where $m_q$ is the maximal ideal of $\mathcal O_{X,q}$, 
 equation (\ref{eq64}) and Nakayama's lemma show that 
 $$
 \begin{array}{lll}
 \mathcal O_X(-i\Gamma-jC)_q&=&\Gamma(X,\mathcal O_X(-i\Gamma-jC))\mathcal O_{X,q}\\
 &=&\Gamma(X,\mathcal O_X(-i\Gamma-(j-1)C))f\mathcal O_{X,q}\\
 &&+\Gamma(X,\mathcal O_X(-(i-1)\Gamma-jC)\mathcal O_X(-\gamma)m_q\\
 &=&\Gamma(X,\mathcal O_X(-i\Gamma-(j-1)C))\mathcal O_X(-C)_q.
 \end{array}
  $$ 
 Thus $\Gamma(X,\mathcal O_X(-i\Gamma-(j-1)C))\mathcal O_{X,q}= \mathcal O_X(-i\Gamma-(j-1)C)_q$, and since this is true for all $q\in \Omega$, 
 $\mathcal O_X(-i\Gamma-(j-1)C)$ is generated by global sections. 
 
 By descending induction on $j$, we obtain that $\mathcal O_X(-i\Gamma)$ is generated by global sections for all $i\ge N$.
 
 We now prove the second statement of the proposition. Let $g_0,\ldots,g_r\in \Gamma(X,\mathcal O_X(-N\Gamma))$ generate $\Gamma(X,\mathcal O_X(-N\Gamma))$ as an $R$-module. Then $g_0,\ldots,g_r$ induce a proper $R$-morphism
 $\phi:X\rightarrow \PP_R^r$ such that $\phi^*\mathcal O_{\PP_R^r}(1)\cong \mathcal O_X(-N\Gamma)$ (by \cite[Theorem II.7.1, Corollary II.4.8]{H}). In fact, $\phi$ is projective, by \cite[Proposition II.5.5 (v)]{EGAII} or \cite[Lemma 29.43.15, Tag 01W7]{SP} and    \cite[Lemma 29.43.16 (1), Tag 01W7]{SP}.  
 Let $Z$ be the image of $\phi$ in $\PP_R^r$ (which is closed since $\phi$ is 
  proper) and let $\mathcal O_Z(1)=\mathcal O_{\PP^r_R}(1)\otimes\mathcal O_Z$. Let $\overline\phi:X\rightarrow Z$ be the induced projective $R$-morphism.  By \cite[Corollary III.11.2]{H}, for $s\in \ZZ$, the  support of $R^1\overline\phi*\mathcal O_X(-s\Gamma)$ is contained in the finite set of closed points of $Z$ which are the images  of  curves contracted by $\overline\phi$ (the prime exceptional divisors $E$ of $\pi$ such that $(E\cdot-\Gamma)=0$). By \cite[Theorem II.5.19]{H}, $\Gamma(Z,R^1\overline\phi*\mathcal O_X(-s\Gamma))$  is a finitely generated $R$-module. Since it's support is the maximal ideal of $R$, the length of  $\Gamma(Z,R^1\overline\phi*\mathcal O_X(-s\Gamma))$ as an $R$-module is finite.  
  
  From  the Leray spectral sequence  we obtain exact sequences (\cite[Theorem 11.2]{Ro})    for $m\in \ZZ$,
  $$
  0\rightarrow H^1(Z,\overline \phi_*\mathcal O_X(-m\Gamma))\rightarrow H^1(X,\mathcal O_X(-m\Gamma))
  \rightarrow H^0(Z,R^1\overline\phi_*\mathcal O_X(-m\Gamma)).
  $$
   For $m\in\NN$, write $m=nN+s$ with $0\le s<N$. Then $\mathcal O_X(-m\Gamma)\cong\overline\phi^*\mathcal O_Z(n)\otimes\mathcal O_X(-s\Gamma)$.  Then by  the projection formula (\cite[Exercise III.8.3]{H}), we obtain exact sequences for $n,s\in \ZZ$
  \begin{equation}\label{eq65}
  \begin{array}{l}
  0\rightarrow H^1(Z,\mathcal O_Z(n)\otimes \overline\phi_*\mathcal O_X(-s\Gamma))
  \rightarrow H^1(X,\overline\phi^*\mathcal O_Z(n)\otimes\mathcal O_X(-s\Gamma))\\
  \rightarrow H^0(Z,(R^1\overline\phi_*\mathcal O_X(-s\Gamma))\otimes\mathcal O_Z(n)).
    \end{array}
  \end{equation}
  Let 
  $$
  s_1=\max\{\ell_R\Gamma(Z,R^1\overline\phi_*\mathcal O_X(-s\Gamma))\mid 0\le s<N\}.
  $$
 We have that $H^1(Z,\mathcal O_Z(n)\otimes \overline\phi_*\mathcal O_X(-s\Gamma))=0$ for all $0\le s<N$ and $n\gg 0$ (\cite[Theorem III.5.2]{H}). Let
  $$
  s_2=\max\{\ell_RH^1(Z,\mathcal O_Z(n) \otimes \overline\phi_*\mathcal O_X(-s\Gamma)) \mid 0\le s<N \mbox{ and }n\in \NN\}
  $$
   $s_2$ is finite by \cite[Proposition III.8.5, III.Theorem 8.8, Corollary III.11.2]{H}. By (\ref{eq65}), we have that 
   $\ell_RH^1(X,\mathcal O_X(-m\Gamma))\le s_1+s_2$ for all $m\in \NN$.
  \end{proof}

\begin{Proposition}\label{Prop21} Let $\Delta$ be an effective anti-nef divisor on $X$. For $n\ge 0$, let $B_n$ be the fixed component of $-n\Delta$. Then there  exists an effective exceptional divisor $G$ on $X$ such that $B_n\le G$ for all $n\in \ZZ_{>0}$.
\end{Proposition}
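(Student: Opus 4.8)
\emph{Proof proposal.} The plan is to use Lemma \ref{Lemma19} and subadditivity of $n\mapsto B_n$ to reduce, via Proposition \ref{Prop20}, to the exceptional prime divisors that lie in the fixed component of $-n\Delta$ for \emph{every} $n$, and then to control these by a Riemann--Roch computation on the curves $E_i$ combined with the negative definiteness of the intersection matrix.

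By Lemma \ref{Lemma19} each $B_n$ is exceptional, so every $B_n$ is supported on the finite set $\{E_1,\dots,E_r\}$, and it suffices to bound the coefficient of each $E_i$ in $B_n$ uniformly in $n$. The assignment $n\mapsto B_n$ is subadditive: if $f\in I(n\Delta)$ and $g\in I(m\Delta)$ then $fg\in I((n+m)\Delta)$ and $\nu_{E_i}(fg)=\nu_{E_i}(f)+\nu_{E_i}(g)$, whence $B_{n+m}\le B_n+B_m$ coefficientwise. Consequently, if for some $i$ there is an $n_i\ge 1$ with $E_i$ not a component of $B_{n_i}$, then writing $n=qn_i+s$, $0\le s<n_i$, gives $(B_n)_{E_i}\le q(B_{n_i})_{E_i}+(B_s)_{E_i}=(B_s)_{E_i}\le\max_{0\le s<n_i}(B_s)_{E_i}$, so this coefficient is bounded. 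By Proposition \ref{Prop20}, if $E_i$ is a component of $B_n$ for infinitely many $n$ then $(\Delta\cdot E_i)=0$; in particular whenever $(\Delta\cdot E_i)\ne 0$ such an $n_i$ exists, and $(B_n)_{E_i}$ is bounded. Let $S'$ be the (finite) set of exceptional prime divisors $E_i$ with $(\Delta\cdot E_i)=0$ that are a component of $B_n$ for every $n\ge 1$; by Proposition \ref{Prop20} again, $B_n$ is supported on $S'$ for all $n\gg 0$, and it remains to bound the coefficients of $B_n$ along $S'$.

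For $n\ge 1$ set $M_n=-n\Delta-B_n$. This is the mobile part of $-n\Delta$, so it has no fixed component (otherwise $B_n$ would not be maximal), and therefore for every prime exceptional divisor $E$ some section of $\mathcal O_X(M_n)$ does not vanish on $E$, which exhibits $\mathcal O_X(M_n)\otimes\mathcal O_E$ as effective of degree $(M_n\cdot E)\ge 0$; thus $M_n$ is nef. Now peel the components of $B_n$ off $-n\Delta$ one prime divisor at a time, using the exact sequences $0\to\mathcal O_X(D-E_i)\to\mathcal O_X(D)\to\mathcal O_X(D)\otimes\mathcal O_{E_i}\to 0$ with $D$ running through $-n\Delta$ minus successive partial subsums of $B_n$. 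At each step the removed prime divisor still lies in the fixed component of the current $D$, so every intermediate restriction map on $H^0$ vanishes; and since for $n\gg 0$ each $E_i$ that occurs lies in $S'$, so that $(\Delta\cdot E_i)=0$, the terms of the Riemann--Roch formula (\ref{eq44}) that are linear in $n$ all cancel. Summing the resulting length identities gives, for $n\gg 0$,
\[
h^1(X,\mathcal O_X(M_n))=h^1(X,\mathcal O_X(-n\Delta))-\tfrac12(B_n\cdot B_n)+\ell(B_n),
\]
where $\ell$ is a fixed linear form in the coefficients of $B_n$ (from the arithmetic genus terms). The intersection matrix of $S'$ is a principal submatrix of that of $E_1,\dots,E_r$, hence negative definite, so $-(B_n\cdot B_n)\ge c\,|B_n|^2$ for a constant $c>0$, where $|B_n|$ is the largest coefficient of $B_n$; since $h^1(X,\mathcal O_X(-n\Delta))\ge 0$ we obtain
\[
h^1(X,\mathcal O_X(M_n))\ \ge\ \tfrac c2\,|B_n|^2-C\,|B_n|
\]
for constants $c,C>0$ and all $n\gg 0$.

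The main obstacle is to bound $h^1(X,\mathcal O_X(M_n))$ \emph{independently of} $n$; the key point is that each $M_n$ is nef. Fix an effective exceptional divisor $A$ with $-A$ ample. By Fujita's vanishing theorem there is an $m_0\in\ZZ_{>0}$, independent of $n$, with $H^1(X,\mathcal O_X(M_n-m_0A))=0$; the exact sequence $0\to\mathcal O_X(M_n-m_0A)\to\mathcal O_X(M_n)\to\mathcal O_X(M_n)\otimes\mathcal O_{m_0A}\to 0$ together with $H^2=0$ (Remark \ref{RemarkR3}) then yields $H^1(X,\mathcal O_X(M_n))\cong H^1(m_0A,\mathcal O_X(M_n)\otimes\mathcal O_{m_0A})$. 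Filtering the \emph{fixed} one--dimensional scheme $m_0A$ by the reduced exceptional curves, the graded pieces are invertible sheaves on the $E_i$ of degree $(M_n\cdot E_i)-(D'\cdot E_i)$ with $0\le D'\le m_0A$; since $M_n$ is nef and $m_0A$ is fixed, these degrees are bounded below uniformly in $n$, so Lemma \ref{Lemma25} bounds the $h^1$ of each graded piece uniformly. Hence $h^1(X,\mathcal O_X(M_n))\le s$ for a constant $s$, which with the previous display forces $\tfrac c2|B_n|^2-C|B_n|\le s$ for $n\gg 0$, so $|B_n|$ is bounded. Combining this with the earlier bounds on $(B_n)_{E_i}$ for $E_i\notin S'$, the coefficientwise supremum $G:=\sum_{i=1}^r\bigl(\sup_n(B_n)_{E_i}\bigr)E_i$ is a well--defined effective exceptional divisor with $B_n\le G$ for all $n$.
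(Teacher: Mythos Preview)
Your argument is essentially correct but follows a genuinely different route from the paper's. Both proofs begin the same way: use Lemma~\ref{Lemma19} for exceptional support, use subadditivity of $n\mapsto B_n$, and invoke Proposition~\ref{Prop20} so that for $n\gg 0$ every component $E_i$ of $B_n$ satisfies $(\Delta\cdot E_i)=0$. (A small slip: your set $S'$ should be the $E_i$ appearing in $B_n$ for \emph{infinitely many} $n$, not for \emph{every} $n$; Proposition~\ref{Prop20} gives the former, and that is all you actually use in the Riemann--Roch cancellation and the negative--definiteness step.) From there the two arguments diverge. You peel $B_n$ off $-n\Delta$ curve by curve to obtain the exact identity $h^1(M_n)=h^1(-n\Delta)-\tfrac12(B_n^2)+\ell(B_n)$ for $n\gg 0$, and then bound $h^1(M_n)$ uniformly in $n$ by Fujita vanishing applied to the varying nef divisors $M_n$; negative definiteness then forces $|B_n|$ bounded. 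The paper instead first passes to a multiple of $\Delta$ so that $\Gamma:=\Delta+B_1$ has $-\Gamma$ free of fixed components, applies Proposition~\ref{Prop50} to get a uniform bound $h^1(-n\Gamma)\le s_0$, and then chooses a \emph{minimal} $C_n$ with $n\Delta\le C_n\le n\Gamma$, $-C_n$ fixed--component--free, and $h^1(-C_n)\le s_0$; a Riemann--Roch dichotomy on each $F_i$ bounds $(C_n\cdot F_i)$ independently of $n$, and \cite[Lemma~7.1]{Z} (negative definiteness) bounds $B_n':=C_n-n\Delta\ge B_n$.

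What each approach buys: your peeling/Fujita argument is conceptually cleaner and yields a quantitative quadratic inequality, but it imports a substantial external result. Since $\pi$ is projective over an arbitrary excellent local ring (possibly of mixed characteristic), the version of Fujita vanishing you need is the relative one over a Noetherian base (e.g.\ Keeler, \emph{Ample filters of invertible sheaves}, J.\ Algebra 259 (2003), Theorem~1.5), not the classical statement over a field; you should cite this explicitly. The paper's route is more self--contained: Proposition~\ref{Prop50} is proved from scratch using only Serre vanishing, the bigraded finiteness trick (\cite[Lemma~4.3]{Z}), and the Leray spectral sequence, so no Fujita--type input is required.
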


\begin{proof} To prove the proposition, it suffices to prove it for it for some positive multiple $d$ of $\Delta$, since for $n\in \NN$, writing $n=md+s$ with $0\le s<d$, we have $B_n\le B_{md}+B_s$. 

Write $-\Delta=\sum_{i=1}^t a_iF_i$.  
Let 
$$
M_i=\{n\in \NN\mid F_i\mbox{ is not a component of }B_n\}.
$$
$M_i$ is a numerical semigroup, so if $M_i$ is nonzero, there exists $h_i\in \ZZ_{>0}$ such that for $n\gg 0$, $n\in M_i$ if and only if $h_i$ divides $n$. Let
$$
\mathcal B(D)=\{F_i\mid F_i\mbox{ is a component of $B_n$ for infinitely many $n$}\}.
$$
By Proposition \ref{Prop20}, $F_i\in \mathcal B(D)$ implies $(F_i\cdot\Delta)=0$ and $F_i$ is exceptional for $\pi$. 
After possibly reindexing that $F_i$, we may assume that the support of $\mathcal B(D)$ is $\cup_{i=1}^sF_i$, for some $s\le t$.
We have that $M_i=0$ or $h_i>1$ for $1\le i\le s$. 
Thus the support of $B_n$ is $\cup_{i=1}^sF_i$ if $n\gg 0$ and $h_i\not | n$ for all $i$ such that $1\le i\le s$ and $M_i$ is non zero.



If we replace $\Delta$ with $n_0\Delta$ for some $n_0\gg 0$, we have that  the support of $B_1$ is $\mathcal B(D)$.
By Proposition \ref{Prop50},
 there exists $s_0\in \NN$ such that 
the effective divisor $\Gamma=\Delta+B_1$ satisfies 
the condition that  $h^1(\mathcal O_X(-n\Gamma))\le s_0$ for all $n\ge 1$ since $-\Gamma$ has no fixed component.

For a given $n\in \ZZ_{>0}$, consider the following conditions on a divisor $Z_n$.
\begin{enumerate}
\item[a)] $n\Gamma\ge Z_n\ge n\Delta$
\item[b)] $-Z_n$ has no fixed component
\item[c)] $h^1(\mathcal O_X(-Z_n))\le s_0$.
\end{enumerate}

Let $C_n$ be a minimal element in the set of divisors satisfying a), b) and c). Let $B_n'=C_n-n\Delta$. Then $nB_1\ge B_n'\ge B_n$ (since $-n\Delta=-n\Gamma+nB_1=-C_n+B_n'$ and $C_n\le n\Gamma$). Thus it suffices to show that the $B_n'$ are bounded from above. 

For $1\le i\le s$ we have short exact sequences 
$$
0\rightarrow O_X(-C_n)\rightarrow \mathcal O_X(-C_n+F_i)\rightarrow \mathcal O_X(-C_n+F_i)\otimes \mathcal O_{F_i}\rightarrow 0,
$$
giving  exact sequences 
$$
\begin{array}{l}
0\rightarrow H^0(X,\mathcal O_X(-C_n))\rightarrow H^0(X,\mathcal O_X(-C_n+F_i))\rightarrow H^0(F_i,\mathcal O_X(-C_n+F_i)\otimes \mathcal O_{F_i})\\
\rightarrow H^1(X,\mathcal O_X(-C_n))\rightarrow H^1(X,\mathcal O_X(-C_n+F_i))
\rightarrow H^1(F_i,\mathcal O_X(-C_n+F_i)\otimes \mathcal O_{F_i})\rightarrow 0.
\end{array}
$$
We will show that
\begin{equation}\label{eq70}
-(C_n\cdot F_i)\le \mbox{max}\{s_0-(F_i^2)-1+p_a(F_i),2p_a(F_i)-2-(F_i^2),0\}
\end{equation}
for all $n$ and $1\le i\le s$.

First assume that $F_i$ is not a component of $B_n'$. Then $(B_n'\cdot F_i)\ge 0$. Since $(F_i\cdot \Delta)=0$ by Proposition \ref{Prop20},  we have that $(C_n\cdot F_i)\ge 0$ and so (\ref{eq70}) holds. 

Now assume that $F_i$ is  a component of $B_n'$. We have that either  
\begin{equation}\label{eq32}
H^0(X,\mathcal O_X(-C_n+F_i))=H^0(X,\mathcal O_X(-C_n))
\end{equation}
 or 
 \begin{equation}\label{eq33}
 h^1(\mathcal O_X(-C_n+F_i))>s_0.
 \end{equation}
 
 If (\ref{eq32}) holds, then $h^0(\mathcal O_X(-C_n+F_i)\otimes O_{F_i})\le s_0$ . Thus 
 $$
 s_0\ge h^0(\mathcal O_X(-C_n+F_i)\otimes\mathcal O_{F_i})\ge ((-C_n+F_i)\cdot F_i)+1-p_a(F_i)
 $$
 by the Riemann-Roch formula (\ref{eq44}), and so (\ref{eq70}) holds. 
 
 Suppose that (\ref{eq33}) holds. Then $h^1(F_i,\mathcal O_X(-C_n+F_i)\otimes\mathcal O_{F_i})>0$, and so 
 $$
 ((-C_n+F_i)\cdot F_i)<2p_a(F_i)-2
 $$
  by (\ref{eq43}). Thus (\ref{eq70}) holds.

For $i$ with $1\le i\le s$, let $\sigma_i=\mbox{max}\{s_0-(F_i^2)-1+p_a(F_i),2p_a(F_i)-2-(F_i^2),0\}$.
Since $(F_i\cdot \Delta)=0$  for  $1\le i\le s$ by Proposition \ref{Prop20}, and by (\ref{eq70}), we have that
$$
(B_n'\cdot F_i)=((C_n-n\Delta)\cdot F_i)=(C_n\cdot F_i)\ge -\sigma_i.
$$
In particular, $\sigma_i\ge -(B_n'\cdot F_i)$.

Since the intersection matrix $((F_i\cdot F_j))$ for $1\le i,j\le s$ is negative definite, and thus is nonsingular, there exists a $\QQ$-divisor $\mathcal E=c_1F_1+\cdots +c_s F_s$  such that $(\mathcal E\cdot F_i)=-\sigma_i$ for $1\le i\le s$. Then
$$
((\mathcal E-B_n')\cdot F_i)=-\sigma_i-(B_n'\cdot F_i)\le 0
$$
for all $i$ implies $\mathcal E\ge B_n'$ by (\cite[Lemma 7.1]{Z}),   since the intersection matrix is negative definite. Thus the $B_n'$ are bounded from above.
\end{proof}

\begin{Corollary}\label{Lemma1} Let $\Delta$ be an effective anti-nef  $\QQ$-divisor on $X$. Let $B_n$ be the fixed component of $-\lceil n\Delta\rceil$; that is, the largest effective divisor on $X$ such that 
$$
\Gamma(X,\mathcal  O_X(-\lceil n\Delta\rceil ))=\Gamma(X,\mathcal O_X(-\lceil n\Delta\rceil -B_n)).
$$
 Then
\begin{enumerate}
\item[1)] The integral divisor $B_n$ has exceptional support for all $n\in \NN$ and
\item[2)] There exists an effective integral divisor $G$ with exceptional support such that $B_n\le G$ for all $n\in \ZZ_{>0}$.
\end{enumerate}
\end{Corollary}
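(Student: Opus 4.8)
The plan is to reduce to Lemma \ref{Lemma19} and Proposition \ref{Prop21}; the only genuinely new issue is that $\Delta$ is a $\QQ$-divisor, while Proposition \ref{Prop21} is stated for integral anti-nef divisors. Part 1) will be immediate: for $n\ge 1$ the divisor $\lceil n\Delta\rceil$ is effective and integral (as $\Delta$ is effective), and $B_n$ is by definition the fixed component of $-\lceil n\Delta\rceil$, so Lemma \ref{Lemma19} shows every prime divisor in its support is exceptional for $\pi$; and $B_0=0$.

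For 2) I would first record the subadditivity of fixed components: writing $\mathrm{Fix}(D)$ for the fixed component of a divisor $D$ on $X$, one has $\mathrm{Fix}(D_1+D_2)\le\mathrm{Fix}(D_1)+\mathrm{Fix}(D_2)$ for all divisors $D_1,D_2$. This is elementary once one observes that, since $\Gamma(X,\mathcal O_X(D))\neq 0$ for every divisor $D$ on $X$, $\mathrm{Fix}(D)$ equals the coefficientwise minimum of the effective divisors $(f)+D$ as $f$ runs over the nonzero elements $f\in\Gamma(X,\mathcal O_X(D))\subset K$: given nonzero sections $f_i$ of $\mathcal O_X(D_i)$, the product $f_1f_2$ is a nonzero section of $\mathcal O_X(D_1+D_2)$ with $(f_1f_2)+D_1+D_2=((f_1)+D_1)+((f_2)+D_2)$, and for each prime divisor $E$ one chooses $f_1,f_2$ minimizing the coefficient of $E$ separately.

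Next I would fix $d\in\ZZ_{>0}$ with $d\Delta$ integral; then $d\Delta$ is an effective anti-nef integral divisor, so Proposition \ref{Prop21} supplies an effective exceptional divisor $G_0$ with $\mathrm{Fix}(-m\,d\Delta)\le G_0$ for all $m\in\ZZ_{>0}$ (and $\mathrm{Fix}(0)=0$). For $0\le s<d$ the divisor $\lceil s\Delta\rceil$ is effective and integral, so $\mathrm{Fix}(-\lceil s\Delta\rceil)$ has exceptional support by Lemma \ref{Lemma19}; let $G_1$ be the sum of these finitely many divisors and put $G=G_0+G_1$, an effective exceptional divisor. Finally, for $n\in\ZZ_{>0}$ write $n=md+s$ with $m\in\NN$ and $0\le s<d$; since $md\Delta$ is integral we have $\lceil n\Delta\rceil=md\Delta+\lceil s\Delta\rceil$, whence by subadditivity $B_n=\mathrm{Fix}(-md\Delta-\lceil s\Delta\rceil)\le\mathrm{Fix}(-md\Delta)+\mathrm{Fix}(-\lceil s\Delta\rceil)\le G_0+G_1=G$. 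I do not anticipate a real obstacle here: the only content beyond citing earlier results is the subadditivity of fixed components and the decomposition $\lceil n\Delta\rceil=md\Delta+\lceil s\Delta\rceil$, whose purpose is precisely to allow us to apply Proposition \ref{Prop21} to the integral divisor $d\Delta$ rather than to $\Delta$ itself.
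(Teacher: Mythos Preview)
Your proposal is correct and follows essentially the same approach as the paper: reduce to the integral case via Proposition~\ref{Prop21} after clearing denominators, and handle the remainder term by subadditivity of fixed components. The paper writes $n=md-c$ with $0\le c<d$ and bounds $B_n\le B_{md}+\lfloor c\Delta\rfloor\le C+d\Delta$, whereas you write $n=md+s$ and bound $B_n\le\mathrm{Fix}(-md\Delta)+\mathrm{Fix}(-\lceil s\Delta\rceil)$; your version has the small advantage that the resulting bound $G_0+G_1$ is manifestly exceptional, while in the paper's bound one still has to project $C+d\Delta$ onto its exceptional part (harmless, since $B_n$ is exceptional by part 1)).
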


\begin{proof} Statement 1) follows from Lemma \ref{Lemma19}. If $\Delta$ is an integral divisor then Statement 2) follows from Proposition \ref{Prop21}.
 
Now assume that $\Delta$ is a $\QQ$-divisor. Write $\Delta=\sum \frac{b_i}{d}F_i$ with $d\in \ZZ_{>0}$ and $b_i\in \NN$, where the $F_i$ are distinct prime divisors on $X$. Since $d\Delta$ is an integral divisor,  there exists an effective integral divisor $C$ with exceptional support such that $B_{nd}\le C$ for all $n\in \NN$. Let $n\in \NN$, and write $n=md-c$ with $m\in \NN$ and $0\le c<d$. Then 
$\mathcal O_X(-\lceil n\Delta\rceil)=\mathcal O_X(-md\Delta+\lfloor c\Delta\rfloor)$. Thus 
$B_n\le B_{md}+\lfloor c\Delta\rceil\le C+d\Delta$.
\end{proof}

\begin{Lemma}\label{Prop22} 
Let $\{D_n\}$ with $n\ge 0$ be an infinite sequence of divisors on $X$ and $Z$ be an effective divisor on $X$. If the sequence $h^1(\mathcal O_X(D_n))$ is bounded from above and if for each prime exceptional component $E$ of $Z$ $(D_n\cdot E)$ is bounded from below then $h^1(\mathcal O_X(D_n+Z))$ is bounded from above.
\end{Lemma}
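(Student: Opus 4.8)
The plan is to build $D_n + Z$ up from $D_n$ by adjoining the prime components of $Z$ one at a time, controlling $h^1$ at each stage via the long exact cohomology sequence of a divisor inclusion, and to treat exceptional and non-exceptional prime components by different arguments — the intersection-number hypothesis being needed only for the former.

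Concretely, I would write $Z = F_1 + \cdots + F_m$ as a sum of prime divisors (with repetition allowed), set $Z_k = F_1 + \cdots + F_k$, and prove by induction on $k$ that $n \mapsto h^1(\mathcal O_X(D_n + Z_k))$ is bounded above; the case $k = 0$ is the hypothesis. For the step from $k$ to $k+1$, put $D_n' = D_n + Z_k$ and $F = F_{k+1}$, so $h^1(\mathcal O_X(D_n'))$ is bounded by the inductive hypothesis, and the short exact sequence
$$
0 \to \mathcal O_X(D_n') \to \mathcal O_X(D_n' + F) \to \mathcal O_X(D_n' + F)\otimes\mathcal O_F \to 0
$$
yields $h^1(\mathcal O_X(D_n' + F)) \le h^1(\mathcal O_X(D_n')) + h^1(F, \mathcal O_X(D_n'+F)\otimes\mathcal O_F)$, so it remains to bound the last term uniformly in $n$. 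If $F$ is exceptional, it is a nonsingular integral projective curve over $k$ by the standing hypothesis on $\pi$, and $\mathcal O_X(D_n'+F)\otimes\mathcal O_F$ is an invertible sheaf on $F$ of degree $(D_n\cdot F) + (Z_k\cdot F) + (F^2)$; since $F$ is an exceptional prime component of $Z$, $(D_n\cdot F)$ is bounded below by hypothesis and the other two terms are fixed, so this degree is bounded below, and Lemma \ref{Lemma25} (with the fixed divisor there taken to be $0$) gives the required uniform bound. If $F$ is not exceptional, I claim $H^1(F, \mathcal G) = 0$ for every coherent $\mathcal G$ on $F$: with $q = \pi(F)$ a height-one prime, $\pi$ restricts to a proper morphism $F \to V(q) = \mbox{Spec}(R/q)$ whose fiber over the generic point of $V(q)$ is finite (since $\pi$ is an isomorphism over $\mbox{Spec}(R)\setminus m_R$) and whose fiber over the closed point is a closed subset of the irreducible curve $F$, necessarily of dimension $0$ (a $1$-dimensional such subset would be all of $F$, forcing $F$ to be exceptional); hence $F \to V(q)$ is proper and quasi-finite, so finite, so $F$ is affine and all higher cohomology on it vanishes. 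In that case the sequence directly gives $h^1(\mathcal O_X(D_n'+F)) \le h^1(\mathcal O_X(D_n'))$.

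The routine parts are checking that the relevant hypothesis propagates along the induction — only the lower bound on $(D_n\cdot E)$ for exceptional components $E$ of $Z$ is used, and it persists because passing from $D_n$ to $D_n' = D_n + Z_k$ only shifts intersection numbers by the fixed constants $(Z_k\cdot E)$ — together with collecting the finitely many bounds produced over the $m$ steps. The one genuinely substantive point is the non-exceptional case: recognizing that a non-exceptional prime divisor $F$ is affine, being proper and quasi-finite (hence finite) over the affine scheme $V(\pi(F))$, which is exactly what makes its contribution to $h^1$ vanish with no intersection-theoretic input, in parallel with the fact that the lemma imposes such a hypothesis only on exceptional components. I expect the dimension count showing $F \to V(q)$ is quasi-finite (i.e. that the special fiber is $0$-dimensional) to be the most delicate step.
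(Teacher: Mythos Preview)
Your proof is correct and follows essentially the same approach as the paper: induction on the prime components of $Z$, using the cohomology long exact sequence together with Lemma~\ref{Lemma25} for exceptional components and affineness for non-exceptional ones. Your justification that a non-exceptional prime divisor $F$ is affine (proper quasi-finite over the affine $V(\pi(F))$, hence finite) is more detailed than the paper's, which simply asserts ``$F$ is affine'' without further argument.
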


\begin{proof} By induction on the number of components of $Z$, we may assume that $h^1(\mathcal O_X(D_n+Z-F))$ is bounded where $F$ is a prime component of $Z$. We have a short exact sequence 
$$
0\rightarrow \mathcal O_X(-F)\rightarrow \mathcal O_X\rightarrow \mathcal O_{F}\rightarrow 0,
$$
giving  exact sequences
$$
H^1(X,\mathcal O_X(D_n+Z-F))\rightarrow H^1(X,\mathcal O_X(D_n+Z))\rightarrow H^1(F,\mathcal O_{X}(D_n+Z)\otimes\mathcal O_{F}).
$$
If $F$ is exceptional, there exists $s\in \ZZ_{>0}$ such that $h^1(F,\mathcal O_X(D_n+Z)\otimes\mathcal O_{F})\le s$ for all $n\ge 0$ by Lemma \ref{Lemma25}, so $h^1(\mathcal O_X(D_n+Z))$ is bounded from above. If $F$ is not exceptional, then $F$ is affine and so $H^1(F,\mathcal O_X(D_m+Z))\otimes\mathcal O_{F})=0$ for all $m$, so again  $h^1(\mathcal O_X(D_n+Z))$ is bounded from above.
\end{proof}

\begin{Proposition}\label{Cor22} Let $\Delta$ be an effective anti-nef divisor on $X$. Then $h^1(\mathcal O_X(-n\Delta))$ is bounded for $n\in \NN$.
\end{Proposition}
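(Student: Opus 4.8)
The plan is to combine Proposition \ref{Prop21}, Proposition \ref{Prop50}, and Lemma \ref{Prop22}. The case $\Delta=0$ is trivial, so assume $\Delta\neq 0$. First I would reduce to proving the bound for $d\Delta$ in place of $\Delta$ for a suitable $d\in\ZZ_{>0}$: if $\{h^1(\mathcal O_X(-md\Delta))\}_{m\in\NN}$ is bounded, then for arbitrary $n$ write $n=md+s$ with $0\le s<d$ and apply Lemma \ref{Prop22} with $D_m=-md\Delta$ and $Z=s\Delta$; the hypothesis on $Z$ holds because $(-md\Delta\cdot E)=-md(\Delta\cdot E)\ge 0$ for every prime exceptional divisor $E$, since $\Delta$ is anti-nef. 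Hence $\{h^1(\mathcal O_X(-n\Delta))\}_n$ is bounded on each residue class mod $d$, so bounded.

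After this reduction I may assume, as in the proof of Proposition \ref{Prop21}, that the fixed component $B_1$ of $-\Delta$ has support equal to the set of prime divisors occurring in the fixed component of $-n\Delta$ for infinitely many $n$, so that $\Gamma:=\Delta+B_1$ satisfies that $-\Gamma$ has no fixed component. By Proposition \ref{Prop50} there is a constant $s_0$ with $h^1(\mathcal O_X(-n\Gamma))\le s_0$ for all $n$, and by Proposition \ref{Prop20} every prime component $E$ of $B_1$ is exceptional with $(\Delta\cdot E)=0$. The key input is then the construction inside the proof of Proposition \ref{Prop21}: for each $n$ it produces a divisor $C_n$ with $n\Delta\le C_n\le n\Gamma$, with $h^1(\mathcal O_X(-C_n))\le s_0$, and with $W_n:=C_n-n\Delta$ effective and exceptional and bounded above by a fixed effective exceptional divisor $G$ every prime component of which meets $\Delta$ in $0$. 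So $C_n$ stays a bounded distance from $n\Delta$ while keeping $h^1$ bounded.

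Then I would run two short exact sequences. From $0\to\mathcal O_X(-C_n)\to\mathcal O_X(-n\Delta)\to\mathcal O_X(-n\Delta)\otimes\mathcal O_{W_n}\to 0$ and $H^2(X,-)=0$ (Remark \ref{RemarkR3}) we get $h^1(\mathcal O_X(-n\Delta))\le s_0+h^1(\mathcal O_X(-n\Delta)\otimes\mathcal O_{W_n})$. Since $W_n\le G$ there is a surjection $\mathcal O_X(-n\Delta)\otimes\mathcal O_G\to\mathcal O_X(-n\Delta)\otimes\mathcal O_{W_n}$, so again by $H^2(X,-)=0$ we get $h^1(\mathcal O_X(-n\Delta)\otimes\mathcal O_{W_n})\le h^1(\mathcal O_X(-n\Delta)\otimes\mathcal O_G)$. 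Finally, filtering the fixed sheaf $\mathcal O_G$ by its prime components exhibits $\mathcal O_X(-n\Delta)\otimes\mathcal O_G$ as an iterated extension of finitely many sheaves $\mathcal O_X(-n\Delta-G')\otimes\mathcal O_E$ with $0\le G'\le G$ and $E$ a prime component of $G$; since $(\Delta\cdot E)=0$ the restriction $\mathcal O_X(-n\Delta)\otimes\mathcal O_E$ has degree $0$ on $E$ for all $n$, so Lemma \ref{Lemma25} bounds the $h^1$ of each graded piece uniformly in $n$. Hence $h^1(\mathcal O_X(-n\Delta)\otimes\mathcal O_G)$, and therefore $h^1(\mathcal O_X(-n\Delta))$, is bounded for $d\Delta$, and by the first step for $\Delta$.

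The hard part is not any single step but locating the right intermediate divisors: the fixed/moving splitting $-n\Delta=(-n\Delta-B_n)+B_n$ does not vary arithmetically in $n$, so Proposition \ref{Prop50} does not apply to the moving parts, and a direct comparison of $-n\Delta$ with $-n\Gamma$ fails because the gap $nB_1-B_n$ is unbounded. What rescues the argument is the simultaneous control — both on $C_n-n\Delta$ and on $h^1(\mathcal O_X(-C_n))$ — already obtained in the proof of Proposition \ref{Prop21}; once that is granted, the exact-sequence bookkeeping and the Riemann--Roch estimates on the finitely many exceptional curves are routine.
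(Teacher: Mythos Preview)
Your proposal is correct and follows the same line as the paper: both extract the divisors $C_n$ from the proof of Proposition~\ref{Prop21} (which come with $h^1(\mathcal O_X(-C_n))\le s_0$ and $0\le C_n-n\Delta\le G$) and then bridge the bounded gap $C_n-n\Delta$ by exact-sequence estimates supported on the fixed exceptional scheme $G$; the paper packages this last step as a single appeal to Lemma~\ref{Prop22} for each of the finitely many $Z$ with $0\le Z\le G$ and takes the maximum, whereas you unwind it by hand via the surjection $\mathcal O_G\twoheadrightarrow\mathcal O_{W_n}$ and a filtration of $\mathcal O_G$, using the extra observation that $(\Delta\cdot E)=0$ for the prime components $E$ of $G$. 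One small slip in your opening reduction: with $D_m=-md\Delta$ and $Z=s\Delta$, Lemma~\ref{Prop22} bounds $h^1(\mathcal O_X(-(md-s)\Delta))$, not $h^1(\mathcal O_X(-(md+s)\Delta))$ --- but this is harmless, since letting $s$ run over $0,\dots,d-1$ still covers every residue class mod $d$.
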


\begin{proof} Let $C_n$ be the effective divisors of the proof of Proposition \ref{Prop21}, so that $B_n'=C_n-n\Delta$ are effective divisors and there exists an effective divisor $G$ with exceptional support such that $B_n'\le G$ for all $n\in \NN$. Since $-\Delta$ is nef, we have that $(-C_n\cdot E)$ is bounded from below for each prime exceptional component $E$ of $G$. Further, we have (by the proof of Proposition \ref{Prop21}) that $h^1(\mathcal O_X(-C_n))\le s_0$ for all $n\in \NN$. For each effective divisor $Z\le G$, Proposition \ref{Prop22} gives us an upper bound for $h^1(\mathcal O_X(-C_n+Z))$ over $n\in \NN$. The maximum of these bounds is an upper bound for $h^1(\mathcal O_X(-n\Delta))$ over $n\in \NN$.
\end{proof}

\begin{Corollary}\label{Cor23} Let $\Delta$ be an effective anti-nef divisor on $X$ and $\mathcal F$ be a coherent sheaf on $X$. Then $h^1(\mathcal O_X(-n\Delta)\otimes \mathcal F)$ is bounded for $n\in \NN$.
\end{Corollary}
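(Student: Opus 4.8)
The plan is to reduce the statement to the case where $\mathcal{F}$ is an invertible sheaf, and then deduce that case from Proposition \ref{Cor22} together with Lemma \ref{Prop22}.

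First I would fix, as in the proof of Lemma \ref{Lemma200}, an effective divisor $H$ on $X$ with exceptional support such that $-H$ is very ample for $\pi$; its existence follows from the negative definiteness of the intersection matrix together with Proposition \ref{PropAmp}. Then $\mathcal{O}_X(-H)$ is a $\pi$-very ample invertible sheaf, so by Serre's theorem (\cite{H}) there are $m, N \in \ZZ_{>0}$ and a surjection $\mathcal{O}_X(mH)^{\oplus N} \to \mathcal{F}$ of $\mathcal{O}_X$-modules. Writing $\mathcal{G}$ for its kernel, I would tensor the short exact sequence $0 \to \mathcal{G} \to \mathcal{O}_X(mH)^{\oplus N} \to \mathcal{F} \to 0$ with the invertible sheaf $\mathcal{O}_X(-n\Delta)$, pass to the long exact cohomology sequence, and use that $H^2(X, \mathcal{G}\otimes\mathcal{O}_X(-n\Delta)) = 0$ by Remark \ref{RemarkR3}; this gives
$$ h^1(\mathcal{O}_X(-n\Delta)\otimes\mathcal{F}) \le N\, h^1(\mathcal{O}_X(mH - n\Delta)) $$
for every $n \in \NN$. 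So it suffices to bound $h^1(\mathcal{O}_X(mH - n\Delta))$ as $n$ varies.

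For the latter I would apply Lemma \ref{Prop22} with $D_n = -n\Delta$ and $Z = mH$. The sequence $h^1(\mathcal{O}_X(-n\Delta))$ is bounded from above by Proposition \ref{Cor22}, and since $\Delta$ is anti-nef we have $(-n\Delta\cdot E) = -n(\Delta\cdot E) \ge 0$ for every prime exceptional divisor $E$ of $\pi$; in particular $(D_n\cdot E)$ is bounded from below for each prime exceptional component of $Z = mH$. Hence Lemma \ref{Prop22} yields that $h^1(\mathcal{O}_X(mH - n\Delta)) = h^1(\mathcal{O}_X(D_n + Z))$ is bounded for $n \in \NN$, and combining this with the displayed inequality completes the proof.

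I do not expect a serious obstacle here; the one point that must be set up correctly is the reduction step, namely that $\mathcal{F}$ should be resolved by a direct sum of copies of $\mathcal{O}_X(mH)$ with $mH$ \emph{effective}, so that $mH - n\Delta$ is $-n\Delta$ plus an effective divisor and Lemma \ref{Prop22} applies directly. (A more naive route, comparing $h^1(\mathcal{O}_X(mH - n\Delta))$ with $h^1(\mathcal{O}_X(-n\Delta))$ by stripping off the components of $mH$ one at a time via sequences of the form $0 \to \mathcal{O}_X(-\Gamma) \to \mathcal{O}_X(-\Gamma + E_i) \to \mathcal{O}_X(-\Gamma + E_i)\otimes\mathcal{O}_{E_i} \to 0$, would instead introduce $h^0$-terms on the exceptional curves that grow with $n$ precisely when $(\Delta\cdot E_i) < 0$, so that approach does not work.)
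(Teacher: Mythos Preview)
Your argument is correct and is essentially the same as the paper's: both resolve $\mathcal F$ by a direct sum of copies of $\mathcal O_X(n_0A)$ (your $\mathcal O_X(mH)$) with $A$ effective anti-ample and exceptional, use $H^2=0$ to get the surjection on $H^1$, and then bound $h^1(\mathcal O_X(-n\Delta+n_0A))$ via Proposition~\ref{Cor22} and Lemma~\ref{Prop22}. Your explicit invocation of Remark~\ref{RemarkR3} for the vanishing of $H^2$ and your care in ensuring $mH$ is effective so that Lemma~\ref{Prop22} applies are exactly the points the paper relies on.
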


\begin{proof} There exists an effective anti-ample  divisor $A$ on $X$ with exceptional support by Proposition \ref{PropAmp}. There exists $n_0\in \ZZ_{>0}$ such that $\mathcal F\otimes \mathcal O(-n_0A)$ is generated by global sections, so there is a surjection $\mathcal O_X^s\rightarrow \mathcal F\otimes \mathcal O_X(-n_0A)$ for some $s$, giving a short exact sequence of coherent sheaves
$$
0\rightarrow \mathcal K\rightarrow \mathcal O_X(n_0A)^s\rightarrow \mathcal F\rightarrow 0
$$
and surjections 
$$
H^1(X,\mathcal O_X(-n\Delta+n_0A))^s\rightarrow H^1(X,\mathcal O_X(-n\Delta)\otimes\mathcal F).
$$
Thus $h^1(\mathcal O_X(-n\Delta)\otimes\mathcal F)$ is bounded above for $n\in \NN$ since $-\Delta$ is nef, and by Lemma \ref{Prop22} and Proposition \ref{Cor22}.
\end{proof}

\section{Asymptotic properties of divisors on a resolution of singularities }\label{SecAS}
We continue with the notation introduced in the introduction and in Section \ref{SecRes}. 
We assume  that 
$R$ is a two dimensional excellent normal local ring with quotient field $K$, maximal ideal $m_R$ and residue field $k$, and that $\pi:X\rightarrow \mbox{Spec}(R)$ is a resolution of singularities  such that  
the exceptional prime divisors $E_1,\ldots,E_r$ of $\pi$ are all nonsingular.  

As explained in the introduction,
If $F$ is prime divisor on $X$ and $\alpha\in \RR_{\ge 0}$, then there is a  valuation ideal 
$I(\nu_F)_{\alpha}=\{f\in R\mod \nu_F(f)\ge\alpha\}$ of $R$, where $\nu_F$ is the valuation of the discrete (rank 1) valuation ring $\mathcal O_{X,F}$.

\begin{Proposition}\label{Prop1} Suppose that $\Delta_1\subset\Delta_2$ are effective anti-nef $\QQ$-divisors on $X$
 such that $\Delta_1\ne \Delta_2$. Then there exists $n_0\in \ZZ_{>0}$ such that $\Gamma(X,\mathcal O_X(-\lceil n\Delta_2\rceil))\ne \Gamma(X,\mathcal O_X(-\lceil n\Delta_1\rceil))$ for all $n\ge n_0$.
\end{Proposition}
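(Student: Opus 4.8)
The plan is to exploit the uniform bound on fixed components from Corollary \ref{Lemma1} together with the trivial inclusion $\Gamma(X,\mathcal O_X(-\lceil n\Delta_2\rceil))\subseteq\Gamma(X,\mathcal O_X(-\lceil n\Delta_1\rceil))$, which holds for every $n$ because $\Delta_2\ge\Delta_1$. Thus the whole content of the statement is to exhibit, for each sufficiently large $n$, an element of the larger module that is not in the smaller one.

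First I would fix a prime divisor $F_0$ lying in the support of the nonzero effective $\QQ$-divisor $\Delta_2-\Delta_1$, and write $a_i\in\QQ_{\ge 0}$ for the coefficient of $F_0$ in $\Delta_i$, so that $0\le a_1<a_2$. Applying Corollary \ref{Lemma1} to the effective anti-nef $\QQ$-divisor $\Delta_1$ produces an effective integral divisor $G$ with exceptional support, independent of $n$, such that the fixed component $B_n$ of $-\lceil n\Delta_1\rceil$ satisfies $B_n\le G$ for all $n\in\ZZ_{>0}$. Let $g_0\in\NN$ be the coefficient of $F_0$ in $G$ (so $g_0=0$ when $F_0$ is not exceptional).

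Next, for each $n$ the maximality property defining the fixed component forces $-\lceil n\Delta_1\rceil-B_n$ to have no fixed component; applying this with the effective divisor $F_0$ gives an $f_n\in R$ with $(f_n)\ge\lceil n\Delta_1\rceil+B_n$ but $(f_n)\not\ge\lceil n\Delta_1\rceil+B_n+F_0$. Comparing the two conditions coefficient-by-coefficient, they can disagree only at $F_0$, so $\nu_{F_0}(f_n)$ equals the $F_0$-coefficient of $\lceil n\Delta_1\rceil+B_n$, whence $\nu_{F_0}(f_n)\le\lceil na_1\rceil+g_0$; moreover $(f_n)\ge\lceil n\Delta_1\rceil$ shows $f_n\in\Gamma(X,\mathcal O_X(-\lceil n\Delta_1\rceil))$. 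On the other hand, if $f_n$ belonged to $\Gamma(X,\mathcal O_X(-\lceil n\Delta_2\rceil))$ we would have $\nu_{F_0}(f_n)\ge\lceil na_2\rceil\ge na_2$. Since $\lceil na_1\rceil+g_0<na_1+1+g_0$, the inequality $n(a_2-a_1)\ge g_0+2$ — valid for all $n\ge n_0$, where $n_0$ depends only on $g_0$ and the positive rational $a_2-a_1$ — makes this impossible. Hence for all $n\ge n_0$ we get $f_n\in\Gamma(X,\mathcal O_X(-\lceil n\Delta_1\rceil))\setminus\Gamma(X,\mathcal O_X(-\lceil n\Delta_2\rceil))$, which is the claim.

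The one real ingredient is Corollary \ref{Lemma1} — and behind it the relative Zariski decomposition of Section \ref{SecZD} and the nef-divisor estimates of Section \ref{SecNef}; granting that uniform bound $G$, everything else is bookkeeping with ceilings. The point to watch is precisely that $g_0$ must be bounded independently of $n$, which is exactly what Corollary \ref{Lemma1} guarantees; without that uniformity one could not separate $\lceil na_1\rceil+g_0$ from $\lceil na_2\rceil$ for all large $n$ at once.
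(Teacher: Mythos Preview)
Your argument is correct and is essentially the same as the paper's: both pick a prime divisor $F_0$ in the support of $\Delta_2-\Delta_1$, invoke Corollary~\ref{Lemma1} to bound the fixed component of $-\lceil n\Delta_1\rceil$ uniformly in $n$, extract an $f_n$ whose $F_0$-coefficient is at most $\lceil na_1\rceil+g_0$, and then use the linear gap $n(a_2-a_1)$ to separate this from $\lceil na_2\rceil$. The only cosmetic difference is that the paper treats the non-exceptional case separately via Lemma~\ref{Lemma19}, whereas you absorb it into the same computation by observing $g_0=0$ when $F_0$ is not exceptional.
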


\begin{proof} Write $\Delta_1=\sum a_iF_i$ and $\Delta_2=\sum b_iF_i$ where the $F_i$ are distinct prime divisors on $X$. We have $b_i\ge a_i$ for all $i$ and $b_j>a_j$ for some $j$. If $F_j$ is not exceptional then certainly $\Gamma(X,\mathcal O_X(-\lceil n\Delta_2\rceil ))\ne \Gamma(X,\mathcal O_X(-\lceil n\Delta_1\rceil ))$ for $n$ sufficiently large by  Lemma \ref{Lemma19}.

Now suppose that $F_j$ is exceptional. By 2) of Lemma \ref{Lemma1}, there exists an effective exceptional divisor $H=\sum c_iF_i$ such that the fixed component $B_n$ of $\Gamma(X,\mathcal O_X(-\lceil n\Delta_1\rceil))$ satisfies $B_n\le H$ for all $n\in \NN$. Observe that $g\in \Gamma(X,\mathcal O_X(-\lceil n\Delta_2\rceil))$ implies $\nu_j(g)\ge \lceil n b_j\rceil$. By definition of $B_n$,  for $n\in \ZZ_{>0}$, there exists $f_n\in \Gamma(X,\mathcal O_X(-\lceil n\Delta_1\rceil))$ such that $(f_n)-\lceil n\Delta_1\rceil=A_n+B_n$ where $F_j$ is not a component of the effective divisor $A_n$. Thus $\nu_j(f_n)=\lceil na_j\rceil +\delta$ with $\delta\le c_j$. We have that $n>\frac{c_j+1}{b_j-a_j}$ implies $\lceil na_j\rceil+\delta<\lceil nb_j\rceil$. Thus $\nu_j(f_n)<\lceil nb_j\rceil$ so that $f_n\not\in \Gamma(X,\mathcal O_X(-\lceil n\Delta_2\rceil))$.
\end{proof}

\begin{Corollary}\label{Prop0} Suppose that $\Delta_1\subset\Delta_2$ are effective anti-nef $\QQ$-divisors on $X$. Then the following are equivalent.
\begin{enumerate}
\item[1)] $\Gamma(X,\mathcal O_X(-\lceil n\Delta_1\rceil))=\Gamma(X,\mathcal O_X(-\lceil n\Delta_2\rceil))$ for infinitely many $n\in \ZZ_{>0}$.
\item[2)] $\Gamma(X,\mathcal O_X(-\lceil n\Delta_1\rceil))=\Gamma(X,\mathcal O_X(-\lceil n\Delta_2\rceil))$ for all $n\gg 0$
\item[3)] $\Delta_1=\Delta_2$.
\end{enumerate}
\end{Corollary}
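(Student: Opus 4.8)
The plan is to derive this as an immediate consequence of Proposition \ref{Prop1}, by establishing the cycle of implications $3)\Rightarrow 2)\Rightarrow 1)\Rightarrow 3)$.

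First I would dispose of the two trivial implications. For $3)\Rightarrow 2)$: if $\Delta_1=\Delta_2$ then $\lceil n\Delta_1\rceil=\lceil n\Delta_2\rceil$ for every $n\in\NN$, so the sheaves $\mathcal O_X(-\lceil n\Delta_1\rceil)$ and $\mathcal O_X(-\lceil n\Delta_2\rceil)$ coincide, and hence so do their spaces of global sections, for every $n$ (in particular for all $n\gg 0$). The implication $2)\Rightarrow 1)$ is immediate, since a condition holding for all $n\gg 0$ certainly holds for infinitely many $n$.

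The content is in $1)\Rightarrow 3)$, which I would prove by contraposition. Assume $\Delta_1\ne\Delta_2$; then $\Delta_1\subset\Delta_2$ are effective anti-nef $\QQ$-divisors on $X$ with $\Delta_1\ne\Delta_2$, so Proposition \ref{Prop1} applies and produces $n_0\in\ZZ_{>0}$ such that $\Gamma(X,\mathcal O_X(-\lceil n\Delta_2\rceil))\ne\Gamma(X,\mathcal O_X(-\lceil n\Delta_1\rceil))$ for all $n\ge n_0$. Hence the equality asserted in 1) can hold only for $n<n_0$, i.e. for at most finitely many $n$, so 1) fails. This completes the cycle and proves the corollary.

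There is no real obstacle here: all the genuine work has already been done in Proposition \ref{Prop1} (which itself rests on the uniform boundedness of the fixed components $B_n$ from Corollary \ref{Lemma1}). The only point requiring a little care is matching the hypotheses exactly — the containment $\Delta_1\subset\Delta_2$ with $\Delta_1\ne\Delta_2$ is precisely the hypothesis of Proposition \ref{Prop1}, and one should note that under this containment $\Gamma(X,\mathcal O_X(-\lceil n\Delta_1\rceil))\supseteq\Gamma(X,\mathcal O_X(-\lceil n\Delta_2\rceil))$ always holds, so "$\ne$" there means strict inclusion.
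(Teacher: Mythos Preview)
Your proof is correct and follows exactly the approach of the paper: the implications $3)\Rightarrow 2)\Rightarrow 1)$ are immediate, and the substantive implication $1)\Rightarrow 3)$ is the contrapositive of Proposition \ref{Prop1}.
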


\begin{proof} Proposition \ref{Prop1} proves the essential implication 1) implies 3). The directions   3) implies 2) and 2) implies 1) are immediate.
\end{proof}

\begin{Proposition}\label{Prop2} Let $\Delta=\sum_{i=1}^s a_iF_i$ be an effective anti-nef $\QQ$-divisor on $X$ and 
$E$ be a prime exceptional divisor on $X$. Then $E=F_j$ for some $j$ with $a_j>0$. 
The following are equivalent
\begin{enumerate}
\item[1)] There exists $n\in \ZZ_{>0}$ such that 
$$
I(n\Delta)=\cap_{i=1}^s I(\nu_{F_i})_{na_i}\ne \cap_{i\ne j}I(\nu_{F_i})_{na_i}.
$$
\item[2)] There exists $n_0\in \ZZ_{>0}$ such that 
$$
I(n\Delta)=\cap_{i=1}^s I(\nu_{F_i})_{na_i}\ne \cap_{i\ne j}I(\nu_{F_i})_{na_i}.
$$
for all $n\ge n_0$.
\item[3)] $(\Delta\cdot F_j)<0$.
\end{enumerate}
\end{Proposition}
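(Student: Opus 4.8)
The plan is to recognize the ``smaller'' ideal $\cap_{i\neq j}I(\nu_{F_i})_{na_i}$ as $I(n\tilde\Delta')$ for an appropriate effective anti-nef $\QQ$-divisor $\tilde\Delta'\le\Delta$, and then read off the equivalences from Corollary \ref{Prop0} (equivalently Proposition \ref{Prop1}) together with the uniqueness of the Zariski decomposition. First I would note that since $\Delta$ is a nonzero effective anti-nef $\QQ$-divisor and $E$ is exceptional, Lemma \ref{RemarkAN1} forces $E$ into the support of $\Delta$, so $E=F_j$ with $a_j>0$. Then I set $\Delta'=\Delta-a_jF_j=\sum_{i\neq j}a_iF_i$, an effective $\QQ$-divisor, observe that $\lceil n\Delta'\rceil=\sum_{i\neq j}\lceil na_i\rceil F_i$, and use the description of $I(D)$ from Section \ref{SecRes} to get
$$
\cap_{i\neq j}I(\nu_{F_i})_{na_i}=\Gamma(X,\mathcal O_X(-\lceil n\Delta'\rceil))=I(n\Delta')
$$
for all $n$. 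Now $\Delta'$ need not be anti-nef, so I pass to its Zariski decomposition $\tilde\Delta'=\Delta'+B'$ (Lemma \ref{Lemma10}, which keeps us among $\QQ$-divisors by part 4)); Lemma \ref{LemmaV2} then gives $I(n\Delta')=I(n\tilde\Delta')$ for every $n$, and since $\Delta'\le\Delta$ with $\Delta-\Delta'=a_jF_j$ of exceptional support and $\Delta$ already anti-nef (hence equal to the anti-nef part of its own Zariski decomposition), Remark \ref{Remarkv1} yields $\tilde\Delta'\le\Delta$.

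With this in hand, $\tilde\Delta'\le\Delta$ are effective anti-nef $\QQ$-divisors and statements 1) and 2) are exactly the assertions that $I(n\Delta)\neq I(n\tilde\Delta')$ for some, respectively all large, $n$. Corollary \ref{Prop0} then does the work: if $\tilde\Delta'=\Delta$ the two ideals coincide for every $n$, so 1) and 2) both fail; if $\tilde\Delta'\neq\Delta$ then $I(n\Delta)\neq I(n\tilde\Delta')$ for all $n\gg0$ by Proposition \ref{Prop1}, so 2) (hence 1)) holds. Consequently 1) $\Leftrightarrow$ 2) $\Leftrightarrow$ ($\tilde\Delta'\neq\Delta$).

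It remains to identify $\tilde\Delta'\neq\Delta$ with $(\Delta\cdot F_j)<0$. Since $\Delta$ is anti-nef, $(\Delta\cdot F_j)\le0$. If $(\Delta\cdot F_j)=0$, then $\Delta=\Delta'+a_jF_j$ satisfies conditions 1) and 2) of Lemma \ref{Lemma10} for $\Delta'$ ($\Delta$ anti-nef; $a_jF_j$ effective with exceptional support; $(\Delta\cdot F_j)=0$, and $F_j$ is the only component of $a_jF_j$), so by uniqueness $\tilde\Delta'=\Delta$. Conversely, if $\tilde\Delta'=\Delta$ then $B'=\Delta-\Delta'=a_jF_j$, and condition 2) of Lemma \ref{Lemma10}, applied to the component $F_j$ of $B'$, gives $(\Delta\cdot F_j)=(\tilde\Delta'\cdot F_j)=0$. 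Hence $\tilde\Delta'\neq\Delta\iff(\Delta\cdot F_j)<0$, and the proposition follows. I expect the one genuinely non-formal step to be the reduction in the first paragraph — realizing that the intersection over $i\neq j$ is again an ideal of the form $I(n\Delta'')$ for an anti-nef $\Delta''$, which is precisely what makes Corollary \ref{Prop0} applicable; once that is set up, the rest is intersection-number bookkeeping and the uniqueness of the Zariski decomposition.
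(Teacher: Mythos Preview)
Your proof is correct and follows essentially the same route as the paper: set $D_1=\Delta-a_jF_j$, take its Zariski decomposition $\Delta_1=D_1+B_1$, use Remark~\ref{Remarkv1} to get $\Delta_1\le\Delta$, invoke Lemma~\ref{LemmaV2} to identify $\cap_{i\ne j}I(\nu_{F_i})_{na_i}$ with $I(n\Delta_1)$, and then apply Corollary~\ref{Prop0}. The only cosmetic difference is that for the implication $\Delta_1\ne\Delta\Rightarrow(\Delta\cdot F_j)<0$ the paper writes $\Delta_1=\Delta-\lambda F_j$ with $\lambda>0$ and computes $(F_j\cdot\Delta)=(F_j\cdot\Delta_1)+\lambda(F_j^2)<0$ directly, whereas you argue the contrapositive via the uniqueness clause of Lemma~\ref{Lemma10}; both are equivalent.
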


\begin{proof}  It follows from Lemma \ref{RemarkAN1} that $E=F_j$ for some $j$ with $a_j>0$.

Let $D_1=\sum_{i\ne j}a_iF_i$, so that $D_1\le \Delta$.
Let $\Delta_1=D_1+B_1$ be the Zariski decomposition of $D_1$. We have that $\Delta_1\le \Delta$ by Remark \ref{Remarkv1}, and so
$0\le \Delta-\Delta_1=a_jF_j-B_1$ so that $0\le B_1\le a_jF_j$. Thus $\Delta_1=\Delta-\lambda F_j$ with $0\le \lambda\le a_j$.

If $\Delta_1\ne \Delta$ then $\lambda >0$, and so
\begin{equation}\label{eq2}
(F_j\cdot\Delta)=(F_j\cdot\Delta_1)+\lambda(F_j^2)<0.
\end{equation}

If $\Delta_1=\Delta$ then $B_1=a_jF_j$. Since  $a_j>0$, we have that  
\begin{equation}\label{eq3}
0=(\Delta_1\cdot F_j)=(\Delta\cdot F_j).
\end{equation}
by 2) of Lemma \ref{Lemma10}.

Suppose that 1) holds. Then $\Delta_1\ne \Delta$ so that $(F_j\cdot\Delta)<0$ by (\ref{eq2}), so that 1) implies 3) holds. Certainly 2) implies 1) is true, so we are reduced to proving 3) implies 2). Now 3) implies $\Delta_1\ne\Delta$ by (\ref{eq2}) and (\ref{eq3}).
If 2) doesn't hold then there exist infinitely many $n\in \ZZ_{>0}$ such that $\Gamma(X,\mathcal O_X(-\lceil n\Delta\rceil))=\Gamma(X,\mathcal O_X(-\lceil n\Delta_1\rceil))$ so that $\Delta_1=\Delta_2$ by Corollary \ref{Prop0}, giving a contradiction.
\end{proof}

\begin{Corollary}\label{Cor1} 
Let $\Delta=\sum_{i=1}^s a_iF_i$ be an effective anti-nef $\QQ$-divisor on $X$ and $E$ be a prime exceptional divisor on $X$ so that $E=F_j$ for some $j$ with $a_j>0$. 
The following are equivalent
\begin{enumerate}
\item[1)] 
$I(n\Delta)=\cap_{i=1}^r I(\nu_{F_i})_{na_i}= \cap_{i\ne j}I(\nu_{F_i})_{na_i}$
for all $n\in \ZZ_{>0}$.
\item[2)] 
 $(\Delta\cdot F_j)=0$.
\end{enumerate}
\end{Corollary}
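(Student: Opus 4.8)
The plan is to obtain this corollary as an immediate logical consequence of Proposition \ref{Prop2}. First I would record the elementary observation that, since $\Delta$ is anti-nef and $F_j$ is a prime exceptional divisor on $X$, one always has $(\Delta\cdot F_j)\le 0$ — this is just the definition of anti-nef. Consequently the condition $(\Delta\cdot F_j)=0$ in statement 2) is exactly the negation of the condition $(\Delta\cdot F_j)<0$ appearing as statement 3) of Proposition \ref{Prop2}.

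Next I would note that, by the definition of $I(n\Delta)$ given in Section \ref{SecRes}, we have $I(n\Delta)=\cap_{i} I(\nu_{F_i})_{na_i}$ (the intersection taken over the support of $\Delta$) for every $n\in\ZZ_{>0}$, so statement 1) of the present Corollary asserts precisely that $I(n\Delta)=\cap_{i\ne j}I(\nu_{F_i})_{na_i}$ for all $n\in\ZZ_{>0}$; that is, it is the negation of statement 1) of Proposition \ref{Prop2} (``there exists $n$ with $I(n\Delta)\ne\cap_{i\ne j}I(\nu_{F_i})_{na_i}$''). Proposition \ref{Prop2} says that its statement 1) holds if and only if $(\Delta\cdot F_j)<0$; negating both sides of this equivalence, and using the first paragraph, yields exactly the equivalence of statements 1) and 2) of the Corollary.

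I do not expect any genuine obstacle here: the whole content is already contained in Proposition \ref{Prop2}, and the only things to verify are bookkeeping matters — that the running hypothesis ``$E=F_j$ with $a_j>0$'' matches the setup of Proposition \ref{Prop2} (it does, via Lemma \ref{RemarkAN1}, which also guarantees that the exceptional divisor $E$ indeed appears in the support of $\Delta$), and that the intersection appearing in statement 1) is taken over the support of $\Delta$ so that it genuinely equals $I(n\Delta)$. If one prefers an argument that does not pass through the negation, one can instead invoke statement 2) of Proposition \ref{Prop2} directly: if $(\Delta\cdot F_j)=0$ then no $n$ with strict inclusion exists, so equality holds for all $n$; conversely, equality for all $n$ rules out $(\Delta\cdot F_j)<0$, hence forces $(\Delta\cdot F_j)=0$ by anti-nefness.
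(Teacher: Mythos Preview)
Your proposal is correct and matches the paper's approach: the paper states Corollary~\ref{Cor1} with no proof, so it is intended to follow immediately from Proposition~\ref{Prop2}, exactly as you describe by negating the equivalence $1)\Leftrightarrow 3)$ there and using that anti-nefness forces $(\Delta\cdot F_j)\le 0$.
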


\begin{Corollary}\label{Cor3} Suppose that $\Delta$  is an effective  anti-nef $\QQ$-divisor on $X$. Then the following are equivalent.
\begin{enumerate}
\item[1)] There exists $n$ such that $m_R\in \mbox{Ass}(R/I(n\Delta))$.
\item[2)] There exists $n_0$ such that $m_R\in \mbox{Ass}(R/I(n\Delta))$ for all $n\ge n_0$.
\item[3)] There exists a prime exceptional divisor $E$ for $\pi$ such that  $(\Delta\cdot E)<0$.
\end{enumerate}
\end{Corollary}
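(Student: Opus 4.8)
The plan is to prove $(2)\Rightarrow(1)\Rightarrow(3)\Rightarrow(2)$. The implication $(2)\Rightarrow(1)$ is immediate (take $n=n_0$), and we may assume $\Delta\neq 0$, since if $\Delta=0$ then $I(n\Delta)=R$ for all $n$ and all three statements are false. For $(1)\Rightarrow(3)$ I would argue by contraposition. Suppose $(\Delta\cdot E)=0$ for every prime exceptional divisor $E$ — this is the negation of $(3)$, because $\Delta$ is anti-nef. Write $\Delta=D'+\sum_k a_kE_k$, where $D'$ is the part of $\Delta$ supported on the non-exceptional components. By Lemma \ref{RemarkAN1} every $E_k$ lies in the support of $\Delta$, so $a_k>0$ for all $k$; hence $\Delta=D'+\sum_k a_kE_k$ satisfies conditions 1) and 2) of Lemma \ref{Lemma10}, and by the uniqueness asserted there it is the Zariski decomposition of $D'$. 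Lemma \ref{LemmaV2} then gives $I(n\Delta)=\Gamma(X,\mathcal O_X(-\lceil nD'\rceil))$ for all $n$. If $D'=0$ this ideal is $R$; otherwise it equals $\cap_i I(\nu_{F_i})_{na_i}$ over the non-exceptional components $F_i$ of $\Delta$, and since $\mathcal O_{X,F_i}=R_{q_i}$ for the prime $q_i=\pi(F_i)\subsetneq m_R$, this is $\cap_i q_i^{(\lceil na_i\rceil)}$, a finite intersection of $q_i$-primary ideals. Hence $\mbox{Ass}(R/I(n\Delta))\subseteq\{q_i\}$ for every $n$, so $m_R\notin\mbox{Ass}(R/I(n\Delta))$, that is, $(1)$ fails.

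The real content is $(3)\Rightarrow(2)$. Fix a prime exceptional divisor $E_j=F_j$, necessarily with coefficient $a_j>0$, such that $(\Delta\cdot F_j)<0$. This is hypothesis 3) of Proposition \ref{Prop2}, so conclusion 2) of that proposition provides an integer $n_0$, which we may enlarge so that $\lceil na_j\rceil\ge 1$ for $n\ge n_0$, such that for every $n\ge n_0$ there is $f_0\in\cap_{i\neq j}I(\nu_{F_i})_{na_i}$ with $\nu_{F_j}(f_0)<\lceil na_j\rceil$. The extra ingredient is that, $F_j$ being exceptional, the center of $\nu_{F_j}$ on $R$ is $m_R$; therefore $c_j:=\min\{\nu_{F_j}(g)\mid 0\neq g\in m_R\}$ is a positive integer (attained at a generator of $m_R$) and $\nu_{F_j}(h)\ge c_j$ for all $h\in m_R$. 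Choose $g_0\in m_R$ with $\nu_{F_j}(g_0)=c_j$, let $t\ge 0$ be the least integer with $\nu_{F_j}(f_0)+tc_j\ge\lceil na_j\rceil-c_j$, and set $f:=f_0g_0^t\in R$. Then $\nu_{F_i}(f)\ge\nu_{F_i}(f_0)\ge\lceil na_i\rceil$ for $i\neq j$, while $\lceil na_j\rceil-c_j\le\nu_{F_j}(f)\le\lceil na_j\rceil-1$. Consequently $f\notin I(n\Delta)=\cap_i I(\nu_{F_i})_{na_i}$; on the other hand, for each $h\in m_R$ we have $\nu_{F_j}(fh)\ge(\lceil na_j\rceil-c_j)+c_j=\lceil na_j\rceil$ and $\nu_{F_i}(fh)\ge\lceil na_i\rceil$ for $i\neq j$, so $fh\in I(n\Delta)$. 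Hence $m_R=(I(n\Delta):f)\in\mbox{Ass}(R/I(n\Delta))$ for all $n\ge n_0$, which is $(2)$.

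The one genuinely delicate point I anticipate is this: Proposition \ref{Prop2} only delivers some element of $\cap_{i\neq j}I(\nu_{F_i})_{na_i}$ lying outside $I(n\Delta)$, and a priori its $\nu_{F_j}$-value can be far below $\lceil na_j\rceil$, whereas for $f$ to be nonzero in $R/I(n\Delta)$ yet annihilated by $m_R$ we need that value in the narrow band $[\lceil na_j\rceil-c_j,\lceil na_j\rceil-1]$, a band whose realizable values are not obviously nonempty. Multiplication by $g_0^t$ resolves this: it walks the $\nu_{F_j}$-value up the arithmetic progression $\nu_{F_j}(f_0),\nu_{F_j}(f_0)+c_j,\nu_{F_j}(f_0)+2c_j,\dots$, and since any block of $c_j$ consecutive integers meets this progression, exactly one term lands in the required band, uniformly in $n\ge n_0$. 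The remaining verifications are routine: that symbolic powers of primes are primary and that the associated primes of an intersection lie in the union of those of the factors (used in $(1)\Rightarrow(3)$), that $f\neq 0$ and $I(n\Delta)\subsetneq R$ for $n\ge 1$, and that every $\nu_F$ is nonnegative on $R$ (used to conclude $fh\in I(n\Delta)$).
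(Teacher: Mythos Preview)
Your proof is correct, but it takes a different route from the paper's in both nontrivial implications, and the comparison is instructive.

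The paper's argument is purely ideal-theoretic and very short: for each fixed $n$ it observes that the decomposition $I(n\Delta)=\bigcap_i I(\nu_{F_i})_{na_i}$ is a primary decomposition in which the exceptional components are $m_R$-primary and the non-exceptional ones are primary to height-one primes; hence $m_R\in\mbox{Ass}(R/I(n\Delta))$ if and only if dropping the exceptional components enlarges the intersection, which the paper then ties to the condition in Proposition~\ref{Prop2}. Your argument for $\neg(3)\Rightarrow\neg(1)$ instead goes through the \emph{geometry}: you recognize that when $(\Delta\cdot E_k)=0$ for all exceptional $E_k$, the expression $\Delta=D'+\sum_k a_kE_k$ is precisely the Zariski decomposition of the non-exceptional part $D'$, and then Lemma~\ref{LemmaV2} hands you $I(n\Delta)=I(nD')$ directly. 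This is an elegant alternative that sidesteps any discussion of redundancy among the several $m_R$-primary components.

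For $(3)\Rightarrow(2)$ your explicit construction of an $f$ with $(I(n\Delta):f)=m_R$ is correct, but heavier than necessary. Once Proposition~\ref{Prop2} produces $f_0\in\bigcap_{i\neq j}I(\nu_{F_i})_{na_i}\setminus I(n\Delta)$, note that since $F_j$ is exceptional the index set $\{i\neq j\}$ contains all non-exceptional indices, so $f_0$ already lies in $J':=\bigcap_{F_i\text{ non-exc.}}I(\nu_{F_i})_{na_i}$. Thus $I(n\Delta)\subsetneq J'$, and the very same primary-decomposition fact you used in the other direction yields $m_R\in\mbox{Ass}(R/I(n\Delta))$ immediately, with no need for the $g_0^t$ adjustment. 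Your walk-up-by-$c_j$ trick is a nice explicit witness, but the shortcut is worth knowing.
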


\begin{proof} Write $\Delta=\sum_{i=1}^sa_iF_i$, so that $I(n\Delta)=\cap_{i=1}^sI(\nu_{F_i})_{na_i}$.
For a fixed $n$, we  have that $m_R\in\mbox{Ass}(R/\cap_{i=1}^sI(\nu_{F_i})_{na_i})$ if and only if 
$$
\cap_{i=1}^sI(\nu_{F_i})_{na_i}\ne \cap_{F_i\mbox{ which are not exceptional}}I(\nu_{F_i})_{na_i}
$$
which occurs if and only if there exists $j$ such that $F_j$ is exceptional and 
$$
\cap_{i=1}^sI(\nu_{F_i})_{na_i}\ne \cap_{i\ne j}I(\nu_{F_i})_{na_i}
$$
Thus by Proposition \ref{Prop2}, the three conditions of the corollary are equivalent.
\end{proof}

Let $\Delta=\sum_{i=1}^sa_iF_i$ be an effective and anti-nef $\QQ$-divisor on $X$. By Lemma \ref{RemarkAN1}, all prime exeptional divisors $E_1,\ldots,E_r$ are in the support of $\Delta$. After permuting the $F_i$, we may assume that $F_i=E_i$ and $a_i>0$ for $1\le i\le r$. We have that
$$
R[\Delta]:= \bigoplus_{n\ge 0}\Gamma(X,\mathcal O_X(-\lceil n\Delta \rceil))=\bigoplus_{n\ge 0}\cap_{i=1}^sI(\nu_{F_i})_{na_i}.
$$
Let $P_j=\bigoplus_{n\ge 0}\Gamma(X,\mathcal O_X(-\lceil n\Delta\rceil-E_j))$ for $1\le j\le r$. 
We have that 
\begin{equation}\label{eq81}
\Gamma(X,\mathcal O_X(-E_j))=\{f\in R\mid \nu_{E_j}(f)>0\}=m_R
\end{equation}
for $1\le j\le r$.
for all $j$. Suppose that $f\in \Gamma(X,\mathcal O_X(-\lceil m\Delta\rceil))$ and $g\in \Gamma(X,\mathcal O_X(-\lceil n\Delta\rceil))$ are such that $fg\in \Gamma(X,\mathcal O_X(-\lceil (m+n)\Delta\rceil-E_j)$. Then 
$$
\nu_{E_j}(f)+\nu_{E_j}(g)=\nu_{E_j}(fg)\ge (m+n)a_j+1
$$
 implies $\nu_{E_j}(f)\ge ma_j+1$ or $\nu_{E_j}(g)\ge na_j+1$ so that $f\in \Gamma(X,\mathcal O_X(-\lceil m\Delta\rceil -E_j)$ or $g\in \Gamma(X,\mathcal O_X(-\lceil n\Delta\rceil -E_j)$. Thus $P_j$ is a prime ideal in $R[\Delta]$.

If $f\in m_R$, then $\nu_{E_j}(f)\ge 1$ for $1\le j\le r$ so that 
\begin{equation}\label{eq80}
m_RR[\Delta]\subset P_j.
\end{equation}

We have  exact sequences
$$
0\rightarrow P_j\rightarrow R[\Delta]\rightarrow \bigoplus_{n\ge 0}\Gamma(E_j,\mathcal O_X(-\lceil n\Delta\rceil)\otimes\mathcal O_{E_j}).
$$

\begin{Remark}\label{Remarkdim0}  Suppose that $\Delta$ is an effective anti-nef $\QQ$-divisor on $X$. Then $\dim R[\Delta]/P_j=0$ if and only if $R[\Delta]/P_j=R/m_R$.
\end{Remark}

\begin{proof} Suppose that for some $m>0$ there exists $f\in \Gamma(X,\mathcal O_X(-\lceil m\Delta\rceil))$ such that it's class $\overline f$ in $\Gamma(X,\mathcal O_X(-\lceil m\Delta\rceil))/\Gamma(X,\mathcal O_X(-\lceil m\Delta\rceil-E_j))$ is nonzero. Then 
$$
\overline ft^m\in \sum_{n=0}^{\infty}\Gamma(X,\mathcal O_X(-\lceil n\Delta\rceil))/\Gamma(X,\mathcal O_X(-\lceil n\Delta\rceil-E_j))t^n=R[\Delta]/P_j
$$
 is nonzero. The element $\overline ft^m$ is not a unit since it is homogeneous of positive degree and it is not nilpotent since $R[\Delta]/P_j$ is an integral domain.
 Thus  $\dim R[\Delta]/P_j>0$. Thus by  (\ref{eq81}), $\dim R[\Delta]/P_j=0$ implies $R[\Delta]/P_j=R/m_R$. 
 \end{proof}

\begin{Proposition}\label{Prop80}  Suppose that $\Delta$ is an effective anti-nef $\QQ$-divisor on $X$.  Then  
$$
\sqrt{m_RR[\Delta]}=\cap_{i=1}^rP_i.
$$
\end{Proposition}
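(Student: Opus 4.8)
The plan is to prove both inclusions $\sqrt{m_RR[\Delta]}\subseteq \cap_{i=1}^r P_i$ and $\cap_{i=1}^r P_i \subseteq \sqrt{m_RR[\Delta]}$.

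The first inclusion is essentially immediate: by (\ref{eq80}) we have $m_RR[\Delta]\subseteq P_j$ for each $j$, and since each $P_j$ is prime (hence radical), it follows that $\sqrt{m_RR[\Delta]}\subseteq \cap_{i=1}^r P_i$.

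For the reverse inclusion, let $ft^n\in \cap_{i=1}^r P_i$ be a homogeneous element; by the structure of $R[\Delta]$ it suffices to treat homogeneous elements. Then $f\in \Gamma(X,\mathcal O_X(-\lceil n\Delta\rceil))$ and $\nu_{E_i}(f)\ge na_i+1$ for $1\le i\le r$, i.e.\ $(f)-\lceil n\Delta\rceil - \sum_{i=1}^r E_i\ge 0$ on the exceptional locus — more precisely $f\in \Gamma(X,\mathcal O_X(-\lceil n\Delta\rceil - \sum_{i=1}^r E_i))$ after accounting for the ceiling (note $\lceil n\Delta\rceil + \sum E_i \ge \lceil n\Delta + \sum E_i\rceil$ componentwise since the $E_i$ have integer coefficients). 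I want to show $(ft^n)^m\in m_RR[\Delta]$ for some $m\gg 0$, i.e.\ $f^m\in m_R\cdot \Gamma(X,\mathcal O_X(-\lceil mn\Delta\rceil))$. The key idea: I will compare $\Delta$ with the divisor $\Delta' = \Delta + \epsilon A$ where $A$ is an effective anti-ample $\QQ$-divisor with exceptional support (which exists by Proposition \ref{PropAmp} since the intersection matrix is negative definite). For $\epsilon>0$ small, $\Delta'$ is still anti-nef (in fact anti-ample enough), and I claim that for suitable small rational $\epsilon$, the condition $\nu_{E_i}(f)\ge na_i + 1$ for all $i$ forces $f\in \Gamma(X,\mathcal O_X(-\lceil n\Delta'\rceil))$ when $n$ is divisible by the denominator — more robustly, raising to a power $m$, we get $\nu_{E_i}(f^m) = m\nu_{E_i}(f)\ge m(na_i+1) = mna_i + m \ge \lceil mn a_i' \rceil$ once $m$ is large relative to the $\epsilon A$ correction. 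Thus $f^m\in \Gamma(X,\mathcal O_X(-\lceil mn\Delta'\rceil))$.

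**The main step** is then to exploit that $R[\Delta']$ is Noetherian (more precisely, a finite module over a finitely generated algebra): because $-\Delta'$ is anti-ample, $\oplus_n \Gamma(X,\mathcal O_X(-\lceil dn\Delta'\rceil))$ is a finitely generated $R$-algebra for a suitable $d$ clearing denominators — this uses the same excellence/universal Nagata argument as in the proof of Lemma \ref{Lemma200} together with the fact that an ample divisor gives a finitely generated section ring. For such a Noetherian divisorial filtration the analytic-spread-zero dichotomy applies, but more directly: since $P_j\supseteq m_RR[\Delta]$ and $R[\Delta]\subseteq R[\Delta']$ is (after restriction to degrees divisible by $d$) a subring with the same fraction field, and since $\Delta' > \Delta$ strictly on every exceptional $E_i$, by Proposition \ref{Prop1} the filtrations differ in all large degrees; the point is that $\Gamma(X,\mathcal O_X(-\lceil n\Delta\rceil - E_j))\supseteq \Gamma(X,\mathcal O_X(-\lceil n\Delta'\rceil))$ for $n\gg 0$ divisible by $d$, because $\lceil n\Delta'\rceil \ge \lceil n\Delta\rceil + E_j$ componentwise once $n\epsilon (A\cdot\text{coeff})\ge 1$. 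Hence $f^m t^{mn}\in R[\Delta']_{mn}$ sits, after multiplication by the ample "denominator", inside $m_RR[\Delta]$ — concretely, since $\mathcal O_X(-d\Delta')$ is generated by global sections and $-d\Delta'$ is very ample, $\Gamma(X,\mathcal O_X(-mn\Delta'))\subseteq \Gamma(X,\mathcal O_X(-mn\Delta)\cdot(-E_j))\subseteq \dots$ and iterating over all $j$ together with (\ref{eq81}) shows any such $f^m$ lies in $m_R\cdot\Gamma(X,\mathcal O_X(-\lceil mn\Delta\rceil))$ for $m$ large. Therefore $(ft^n)^m\in m_RR[\Delta]$, giving $ft^n\in\sqrt{m_RR[\Delta]}$.

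**The hard part** will be making the transition from "$f$ vanishes to order $\ge na_i+1$ along each exceptional $E_i$" to "$f^m\in m_R\Gamma(X,\mathcal O_X(-\lceil mn\Delta\rceil))$" rigorous without circular appeal to the theorems this proposition is meant to support; the cleanest route is probably to bound the fixed components using Corollary \ref{Lemma1}, which gives a fixed effective exceptional $G$ with $B_n\le G$ for all $n$, so that $\Gamma(X,\mathcal O_X(-\lceil n\Delta\rceil)) = \Gamma(X,\mathcal O_X(-\lceil n\Delta\rceil - B_n))$ and the section ring of $-\Delta - $(small ample) becomes genuinely globally generated in large degree, at which point Nakayama and the graded structure finish the argument. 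I will carry out the steps in the order: (1) the easy inclusion via primeness of $P_j$ and (\ref{eq80}); (2) reduce the reverse inclusion to homogeneous $ft^n$; (3) introduce the perturbed anti-ample $\Delta'$ and the Noetherian section ring; (4) use Corollary \ref{Lemma1} and global generation to place a high power $f^m$ into $m_R\cdot\Gamma(X,\mathcal O_X(-\lceil mn\Delta\rceil))$; (5) conclude $ft^n\in\sqrt{m_RR[\Delta]}$.
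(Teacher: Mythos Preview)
Your overall architecture matches the paper's: the easy inclusion via (\ref{eq80}) and primeness of the $P_j$, reduction to a homogeneous element, and then a perturbation of $\Delta$ by a small anti-ample exceptional divisor to push the problem into an ample/Noetherian regime. That much is fine.

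The genuine gap is the step you yourself flag as ``the hard part''. You repeatedly show that $f^m$ lies in $\Gamma(X,\mathcal O_X(-\lceil mn\Delta\rceil-\sum E_j))$ or in $\Gamma(X,\mathcal O_X(-\lceil mn\Delta'\rceil))$, but every inclusion you write down from there (e.g.\ $\Gamma(X,\mathcal O_X(-mn\Delta'))\subseteq \Gamma(X,\mathcal O_X(-mn\Delta-E_j))$) only lands you back in $\cap_j P_j$, which is where you started. Noetherianity of $R[\Delta']$, Proposition~\ref{Prop1}, and Corollary~\ref{Lemma1} give you control over \emph{which sections exist}, not a factorization of those sections as $m_R$ times something in $I(mn\Delta)$. ``Nakayama and the graded structure'' does not produce an element of $m_R$ here; you need an actual product decomposition at the level of global sections.

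The paper closes exactly this gap, and it is the substance of the proof. It takes the anti-ample perturbation to be $C$, where $I\mathcal O_X=\mathcal O_X(-C)$ for the specific $m_R$-primary ideal $I$ whose blowup is $X$ (Lemma~\ref{Lemma200}). Writing $\sum E_i\ge \alpha C$ puts $h^{2en}$ into $\Gamma(X,\mathcal O_X(-n\Gamma-jC))$ with $\Gamma=2e(\Delta+\frac{\alpha}{2}C)$ anti-ample and $j$ proportional to $n$. Then the bigraded finite-generation argument from the proof of Proposition~\ref{Prop50} (specifically the identity coming from (\ref{eq62})) yields, for $j\ge N$,
\[
\Gamma(X,\mathcal O_X(-i\Gamma-jC))=\Gamma(X,\mathcal O_X(-C))\cdot\Gamma(X,\mathcal O_X(-i\Gamma-(j-1)C))
=I\cdot\Gamma(X,\mathcal O_X(-i\Gamma-(j-1)C)).
\]
Since $I\subset m_R$, this literally peels off a factor of $m_R$, giving $h^{N'}\in m_R\,\Gamma(X,\mathcal O_X(-N'\Delta))$ for suitable $N'$. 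Your outline is missing this mechanism (or any substitute for it), and without it the reverse inclusion does not go through.
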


\begin{proof} We have that  $\sqrt{m_RR[\Delta]}\subset \cap_{i=1}^rP_i$ by (\ref{eq80}).

Let $h\in \cap_{i=1}^rP_i$. We will show that $h^n\in m_RR[\Delta]$ for some $n\in \ZZ_{>0}$, which will establish the proposition.
We may assume that $h$ is homogeneous, so that 
$$
h\in \cap_{i=1}^r\Gamma(X,\mathcal O_X(-\lceil a\Delta\rceil-E_i))=
\Gamma(X,\mathcal O_X(-\lceil a \Delta\rceil-E_1-\cdots-E_r)
$$
for some $a\in \NN$. We must show that $h^n\in m_R\Gamma(X,\mathcal O_X(-\lceil an\Delta\rceil))$ for some $n\in \ZZ_{>0}$.

First suppose that $a=0$. We have that $\Gamma(X,\mathcal O_X(-E_1-\cdots-E_r)=m_R$ so we already have that 
$h\in m_R\Gamma(X,\mathcal O_X)=m_R$.

Now suppose that $a>0$. After replacing $\Delta$ with a positive multiple of $\Delta$ and $h$ with a power of $h$ we may assume that $\Delta$ is an integral divisor and $h\in \Gamma(X,\mathcal O_X(-\Delta-\sum_{i=1}^rE_i))$.
By Lemma \ref{Lemma200}, there exists an $m_R$-primary ideal $I$ in $R$ such that $X$ is the blowup of $I$, so that
$X=\mbox{Proj}(\bigoplus_{n\ge 0}I^n)$ and $I\mathcal O_X=\mathcal O_X(-C)$ is very ample, where $C$ is an effective divisor whose support is the union of all exceptional prime divisors $E_1,\ldots, E_r$. 
The graded ring $\bigoplus_{n\ge 0}\Gamma(X,I^n\mathcal O_X)$ is a finite $\bigoplus_{n\ge 0}I^n$-module and there exists $n_0\in \ZZ_{>0}$ such that the $R$-ideal $\Gamma(X,I^n\mathcal O_X)=I^n$ for $n\ge n_0$ by Lemma \ref{Lemma00}. Since $R$ and $X$ are normal, $\Gamma(X,I^n\mathcal O_X)=\overline{I^n}$ for all $n\ge 0$.

After possibly replacing $I$ with a positive power of $I$ we may assume that $\Gamma(X,I^n\mathcal O_X)=I^n$ for all $n\in \NN$ and that there exists an effective divisor $H\sim -C$ on $X$ with no exceptional prime divisors in its support. Let $f\in \Gamma(X,\mathcal O_X(-C))=I$ be such that $(f)-C=H$. We have a short exact sequence
$$
0\rightarrow \mathcal O_X(C)\stackrel{f}{\rightarrow}\mathcal O_X\rightarrow \mathcal O_H\rightarrow 0.
$$
There exists $\alpha\in \QQ_{>0}$ such that $F:=\sum_{i=1}^rE_i-\alpha C\ge 0$. There exists $e\in \ZZ_{>0}$ such that $e\alpha C$ is an integral divisor and so $eF$ is an integral divisor.  Thus for $n\in \ZZ_{>0}$, we have that 
$$
\begin{array}{lll}
h^{n2e}&\in& \Gamma(X,\mathcal O_X(-n2e\Delta-n2e(\sum_{i=1}^rE_i))=\Gamma(X,\mathcal O_X(-n2e\Delta-n2e\alpha C-n2eF)\\
&\subset& \Gamma(X,\mathcal O_X(-n2e\Delta-n2e\alpha C))=\Gamma(X,\mathcal O_X(-n2e(\Delta+\frac{\alpha}{2}C)-n2e\frac{\alpha}{2}C)).
\end{array}
$$
Now the effective integral divisor $2e(\Delta+\frac{\alpha}{2}C)$ is anti-ample by Proposition \ref{PropAmp}, since $\Delta$ is anti-nef. Thus there exists $n_0\in \ZZ_{>0}$ such that $\mathcal O_X(-n2e(\Delta+\frac{\alpha}{2}C))$ is generated by global sections for all $n\ge n_0$. Let $\Gamma=n_02e(\Delta+\frac{\alpha}{2}C)$.
By the argument of the proof of Proposition \ref{Prop50}, applying (\ref{eq62}), there exists $N>0$ such that
$$
\Gamma(X,\mathcal O_X(-i\Gamma-jC))=\Gamma(X,\mathcal O_X(-i\Gamma-(j-1)C)\Gamma(X,\mathcal O_X(-C))+f\Gamma(X,\mathcal O_X(-i\Gamma-(j-1)C)
$$
whenever $j\ge N$ and $i\ge 0$. Since $f\in \Gamma(X,\mathcal O_X(-C))$, we have that
$$
\begin{array}{lll}
\Gamma(X,\mathcal O_X(-i\Gamma-jC))&=&\Gamma(X,\mathcal O_X(-i\Gamma-(j-1)C))\Gamma(X,\mathcal O_X(-C))\\ 
&=&I\Gamma(X,\mathcal O_X(-i\Gamma-(j-1)C))\subset I\Gamma(X,\mathcal O_X(-in_02e\Delta)).
\end{array}
$$
Thus 
$$
h^{nn_02e}\in \Gamma(X,\mathcal O_X(-n\Gamma-\frac{nn_02e\alpha}{2}C))\subset I\Gamma(X,\mathcal O_X(-nn_02e\Delta))
\subset m_R\Gamma(X,\mathcal O_X(-nn_02e\Delta))$$
whenever $n$ is so large that $n\ge \frac{N}{n_0e\alpha}$.
\end{proof}

\begin{Corollary}\label{Remark82}  Suppose that $\Delta$ is an effective anti-nef $\QQ$-divisor on $X$. Then 
$$
\dim R[\Delta]/m_RR[\Delta]=0
$$
 if and only if the image of $\Gamma(X,\mathcal O_X(-\lceil n\Delta\rceil))$ in $\Gamma(E_j,\mathcal O_X(-\lceil n\Delta\rceil)\otimes\mathcal O_{E_j})$ is zero for $1\le j\le r$ and for all $n>0$. 
 \end{Corollary}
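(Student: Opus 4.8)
The plan is to deduce this directly from Proposition \ref{Prop80} together with Remark \ref{Remarkdim0}. First I would use Proposition \ref{Prop80}, which gives $\sqrt{m_RR[\Delta]}=\cap_{i=1}^rP_i$, to rewrite
$$
\dim R[\Delta]/m_RR[\Delta]=\dim R[\Delta]/\sqrt{m_RR[\Delta]}=\dim R[\Delta]/\textstyle\bigcap_{i=1}^rP_i.
$$
Since each $P_i$ is prime, any prime of $R[\Delta]$ containing $\cap_{i=1}^rP_i$ must contain some $P_i$, so the minimal primes over $\cap_{i=1}^rP_i$ occur among $P_1,\ldots,P_r$; hence $\dim R[\Delta]/\cap_{i=1}^rP_i=\max_{1\le j\le r}\dim R[\Delta]/P_j$. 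Therefore $\dim R[\Delta]/m_RR[\Delta]=0$ if and only if $\dim R[\Delta]/P_j=0$ for every $j$ with $1\le j\le r$.

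Next I would translate the condition $\dim R[\Delta]/P_j=0$ into the vanishing statement of the corollary. From the exact sequence
$$
0\rightarrow P_j\rightarrow R[\Delta]\rightarrow \bigoplus_{n\ge 0}\Gamma(E_j,\mathcal O_X(-\lceil n\Delta\rceil)\otimes\mathcal O_{E_j})
$$
displayed just before Remark \ref{Remarkdim0}, the degree-$n$ component $(R[\Delta]/P_j)_n$ is exactly the image of $\Gamma(X,\mathcal O_X(-\lceil n\Delta\rceil))$ in $\Gamma(E_j,\mathcal O_X(-\lceil n\Delta\rceil)\otimes\mathcal O_{E_j})$. In degree $0$ this image equals $R/m_R$, because $\Gamma(X,\mathcal O_X(-E_j))=m_R$ by (\ref{eq81}). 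By Remark \ref{Remarkdim0}, $\dim R[\Delta]/P_j=0$ is equivalent to $R[\Delta]/P_j=R/m_R$, i.e.\ to $(R[\Delta]/P_j)_n=0$ for all $n>0$, i.e.\ to the image of $\Gamma(X,\mathcal O_X(-\lceil n\Delta\rceil))$ in $\Gamma(E_j,\mathcal O_X(-\lceil n\Delta\rceil)\otimes\mathcal O_{E_j})$ being zero for all $n>0$.

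Combining the two steps gives the assertion. I do not expect any real obstacle here: the substantive work has already been carried out in Proposition \ref{Prop80}. The only points that need a little care are the standard commutative-algebra fact that the Krull dimension of a quotient by a finite intersection of primes is the maximum of the dimensions of the individual quotients, and keeping track of the grading so that the degree-$0$ part correctly contributes $R/m_R$ rather than being mistakenly required to vanish.
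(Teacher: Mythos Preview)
Your proposal is correct and follows essentially the same approach as the paper: the paper's proof also invokes Proposition \ref{Prop80} to reduce $\dim R[\Delta]/m_RR[\Delta]=0$ to $\dim R[\Delta]/P_j=0$ for all $j$, and then Remark \ref{Remarkdim0} to identify this with $R[\Delta]/P_j=R/m_R$. You have simply spelled out in more detail the commutative-algebra step about the intersection of primes and the identification of the graded pieces of $R[\Delta]/P_j$ with the images in $\Gamma(E_j,\mathcal O_X(-\lceil n\Delta\rceil)\otimes\mathcal O_{E_j})$.
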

 

\begin{proof} By Proposition \ref{Prop80}, we have that $\dim R[\Delta]/m_RR[\Delta]=0$ if and only if $\dim R[\Delta]/P_j=0$ for all $j$, and this second conditions holds if and only if $R[\Delta]/P_j=R/m_R$ for all $j$ by Remark \ref{Remarkdim0}.
\end{proof}

\begin{Proposition}\label{Prop3} Suppose that $\Delta$ is an effective anti-nef $\QQ$-divisor on $X$ and $E_j$ is a prime  exceptional divisor for $\pi:X\rightarrow\mbox{Spec}(R)$. Then 
\begin{enumerate}
\item[1)] $\dim R[\Delta]/P_j=2$ if  $(\Delta\cdot E_j)<0$.
\item[2)] $\dim R[\Delta]/P_j\le 1$ if $(\Delta\cdot E_j)=0$.
\end{enumerate}
\end{Proposition}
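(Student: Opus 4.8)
Set $A:=R[\Delta]/P_j$. By the discussion preceding the statement, $P_j$ is a homogeneous prime of $R[\Delta]$, so $A$ is a graded domain, and $A_0=R/(P_j\cap R)=R/m_R=k$ by (\ref{eq81}). My plan is to compute the Hilbert function $\dim_k A_n$ up to a bounded error and read $\dim A$ off from its growth. Tensoring $0\to\mathcal O_X(-E_j)\to\mathcal O_X\to\mathcal O_{E_j}\to 0$ with $\mathcal O_X(-\lceil n\Delta\rceil)$ and taking cohomology identifies $A_n$ with the image of $H^0(X,\mathcal O_X(-\lceil n\Delta\rceil))$ in $H^0(E_j,\mathcal M_n)$, where $\mathcal M_n:=\mathcal O_X(-\lceil n\Delta\rceil)\otimes\mathcal O_{E_j}$, and the cokernel of this restriction embeds into $H^1(X,\mathcal O_X(-\lceil n\Delta\rceil-E_j))$. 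Writing $\Delta=\tfrac1e\Delta'$ with $\Delta'$ an effective anti-nef integral divisor and, for $n=em-c$ with $0\le c<e$, $\mathcal O_X(-\lceil n\Delta\rceil-E_j)\cong\mathcal O_X(-m\Delta')\otimes\mathcal O_X(\lfloor\tfrac ce\Delta'\rfloor-E_j)$, Corollary \ref{Cor23} applied to the finitely many sheaves $\mathcal O_X(\lfloor\tfrac ce\Delta'\rfloor-E_j)$ shows $h^1(X,\mathcal O_X(-\lceil n\Delta\rceil-E_j))$ is bounded in $n$. On the regular projective curve $E_j$ the invertible sheaf $\mathcal M_n$ has degree $-(\lceil n\Delta\rceil\cdot E_j)=-n(\Delta\cdot E_j)-\bigl((\lceil n\Delta\rceil-n\Delta)\cdot E_j\bigr)$, whose second summand is bounded because $\lceil n\Delta\rceil-n\Delta$ has coefficients in $[0,1)$ on the fixed support of $\Delta$; so $\deg\mathcal M_n$ is bounded below, $h^1(E_j,\mathcal M_n)$ is bounded by Lemma \ref{Lemma25}, and Riemann--Roch (\ref{eq44}) gives
$$
\dim_k A_n=h^0(E_j,\mathcal M_n)+O(1)=\deg\mathcal M_n+1-p_a(E_j)+O(1)=-n(\Delta\cdot E_j)+\sigma(n)
$$
for a bounded function $\sigma$.

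For part 2), $(\Delta\cdot E_j)=0$ makes $\{\dim_k A_n\}$ bounded. Each $A_n$ is finite-dimensional over $k$, so any $A'=k[A_1,\dots,A_N]\subseteq A$ is a finitely generated graded $k$-algebra with $\dim_k A'_n\le\dim_k A_n$ bounded; since the Hilbert function of such an $A'$ is eventually a quasi-polynomial of degree $\dim\mathrm{Proj}(A')=\dim A'-1$, boundedness forces $\dim A'\le 1$. Any chain of primes in $A$ of length $\ell$ restricts, after picking an element of $\mathfrak P_i\setminus\mathfrak P_{i-1}$ for each step and enlarging $N$ so that $A'$ contains the finitely many homogeneous components of these elements, to a chain of length $\ell$ in such an $A'$; hence $\ell\le 1$, i.e. $\dim A\le 1$.

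For part 1), $(\Delta\cdot E_j)<0$ makes $\dim_k A_n=-n(\Delta\cdot E_j)+\sigma(n)$ grow linearly with positive leading coefficient. Since $m_RR[\Delta]\subseteq P_j$ by (\ref{eq80}), $\dim A\le\dim R[\Delta]/m_RR[\Delta]=\ell(\mathcal I(\Delta))\le\dim R=2$ (\cite{CPS}), and $\dim A\ge 1$ by Remark \ref{Remarkdim0} since $A\ne R/m_R$. The step I expect to be the main obstacle is excluding $\dim A=1$: this does not follow from linear growth of the Hilbert function alone, because a graded domain $A$ with $A_0=k$ can have $\dim A=1$ while $\dim_k A_n$ grows linearly, the extra transcendence being carried by residue fields that grow with $n$. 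What rules this out here is the geometry of $E_j$: for $n\gg 0$, $\deg\mathcal M_n$ is large, $\mathcal M_n$ is very ample on $E_j$, and $A_n\subseteq H^0(E_j,\mathcal M_n)$ has uniformly bounded codimension, so after removing its bounded-degree base locus, $A_n$ defines a non-constant morphism from $E_j$ to a projective space; hence some ratio $a/b$ with $a,b\in A_n$ is a non-constant rational function on $E_j$, transcendental over $k$. As $a/b$ has degree $0$ and $A$ also contains a nonzero homogeneous element of positive degree (transcendental because the graded pieces it generates are distinct), $\mathrm{trdeg}_k\mathrm{Frac}(A)=2$. The plan is then to conclude $\dim A=2$ by producing two homogeneous algebraically independent $u,v\in A$: since $\dim_k(A/uA)_n=\dim_k A_n-\dim_k A_{n-\deg u}$ is bounded, a minimal prime $\mathfrak q$ over $uA$ with $\mathfrak q\ne A_+$ has $\dim A/\mathfrak q\le 1$ by the argument of part 2), whence $\dim A\ge\mathrm{ht}(\mathfrak q)+\dim A/\mathfrak q\ge 2$; the delicate point is to exclude $\sqrt{uA}=A_+$, which again uses that $A_n$ is a large linear system on the fixed curve $E_j$ over $k$ (equivalently, one appeals to the fact that $\dim A=\mathrm{trdeg}_k\mathrm{Frac}(A)$ for a graded domain $A$ with $A_0=k$).

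Together with Proposition \ref{Prop80}, which gives $\dim R[\Delta]/m_RR[\Delta]=\max_i\dim R[\Delta]/P_i$, and Corollary \ref{Cor3}, the present statement yields $\ell(\mathcal I(\Delta))=2$ precisely when some exceptional $(\Delta\cdot E_i)<0$, which is the main step toward Theorem \ref{Cor2}.
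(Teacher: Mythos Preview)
Your treatment of part 2) is essentially the paper's: write $A=R[\Delta]/P_j$, observe $\dim_kA_n$ is bounded via Riemann--Roch on $E_j$, exhaust $A$ by finitely generated graded subalgebras $A'=k[A_1,\dots,A_N]$ whose Hilbert functions are then bounded (forcing $\dim A'\le 1$), and descend any prime chain in $A$ to some $A'$. That part is fine.

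For part 1) you correctly set up the cohomological estimate $\dim_kA_n=-n(\Delta\cdot E_j)+O(1)$ and correctly flag that linear growth alone does not force $\dim A=2$ for a non-Noetherian graded domain. But you do not close the gap. Your proposed route---pick homogeneous $u\in A$, note $\dim_k(A/uA)_n$ is bounded, and find a minimal prime $\mathfrak q$ over $uA$ with $\mathfrak q\ne A_+$---hinges precisely on excluding $\sqrt{uA}=A_+$, which you identify as ``the delicate point'' and then leave unresolved. The fallback assertion that $\dim A=\mathrm{trdeg}_k\mathrm{Frac}(A)$ for a graded domain with $A_0=k$ is not a standard fact in the non-Noetherian setting (the familiar equality uses finite generation), so invoking it amounts to restating what must be proved.

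The paper avoids this difficulty by exploiting the ambient Noetherian ring rather than working inside $A$ alone. Since $(\Delta\cdot E_j)<0$, the restriction $\mathcal O_X(-d\Delta)\otimes\mathcal O_{E_j}$ is ample on the curve $E_j$; choosing $e$ with $e\Delta$ integral and $\mathcal O_X(-e\Delta)|_{E_j}$ very ample, the Veronese $\overline B=\bigoplus_m\Gamma(E_j,\mathcal O_X(-me\Delta)\otimes\mathcal O_{E_j})$ is a \emph{finitely generated} $B_0$-algebra of Krull dimension $2$ (the homogeneous coordinate ring of a projective embedding of $E_j$). One then picks $0\ne F\in A_{ne}$ for large $n$, takes a height-one homogeneous prime $Q\subset\overline B$ not containing $F$ (such $Q$ exist because $\mathrm{Proj}(\overline B)\cong E_j$ has points outside $V(F)$), and pulls back $P=Q\cap\overline A$ where $\overline A=\bigoplus_mA_{me}$. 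Since $\dim_k(B_{ne}/Q_{ne})$ is bounded while $\dim_kA_{ne}\ge cn$, one gets $P\ne 0$; and $F\notin P$ gives $P\ne\overline A_+$. The chain $0\subsetneq P\subsetneq\overline A_+$ yields $\dim\overline A\ge 2$, and $\dim A\ge 2$ follows by integrality of $\overline A\hookrightarrow A$. This replaces your unproved step with a concrete pullback from a Noetherian ring where height-one primes visibly exist.
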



\begin{proof} Suppose that $(\Delta\cdot E_j)<0$. We have short exact sequences
$$
0\rightarrow \mathcal O_X(-\lceil n\Delta\rceil-E_j)\rightarrow \mathcal O_X(-\lceil n\Delta\rceil )\rightarrow \mathcal O_X(-\lceil n\Delta\rceil)\otimes\mathcal O_{E_j}\rightarrow 0.
$$
Taking global sections we have short exact sequences
$$
\begin{array}{l}
0\rightarrow \Gamma(X,\mathcal O_X(-\lceil n\Delta\rceil-E_j))
\rightarrow 
\Gamma(X,\mathcal O_X(-\lceil n\Delta\rceil))\\\rightarrow \Gamma(E_j,O_X(-\lceil n\Delta\rceil)\otimes\mathcal O_{E_j})
\rightarrow H^1(X,\mathcal O_X(-\lceil n\Delta\rceil-E_j)).
\end{array}
$$
There exists $d\in \ZZ_{>0}$ such that $d\Delta$ is an integral divisor. By Corollary \ref{Cor23}, applied to $d\Delta$ and the coherent sheaves $\mathcal O_X(-\lceil s\Delta\rceil-E_j)$ for $0\le s<d$, we have that 
$$
h^1(X,\mathcal O_X(-\lceil n\Delta\rceil -E_j))
$$
is bounded for positive $n$. 
Since $(-\Delta\cdot E_j)>0$, we have (by the Riemann Roch theorem (\ref{eq44})) that there exists $c'>0$ such that 
$$
h^0(\mathcal O_X(-\lceil n\Delta\rceil )\otimes\mathcal O_{E_j})>c'n
$$
for $n\gg 0$. Thus there exists $c>0$ such that the image $A_n:=\mbox{Im}(\Gamma(X,\mathcal O_X(-\lceil n\Delta\rceil))$ in $B_n:=\Gamma(E_j,\mathcal O_X(-\lceil n\Delta\rceil )\otimes\mathcal O_{E_j})$ satisfies 
\begin{equation}\label{eqF1}
\ell_R(\Gamma(X,\mathcal O_X(-\lceil n\Delta\rceil))/\Gamma(X,\mathcal O_X(-\lceil n\Delta\rceil -E_j))=  \ell_R(A_n)=\dim_kA_n\ge cn
\end{equation}
for $n\gg 0$.

Let $A=\oplus_{n\ge 0}A_n$. We have that  $B_{0}$ is a finite field extension of $k=R/m_R=A_0$.
Now $\mathcal O_X(-d\Delta)\otimes\mathcal O_{E_j}$ is ample on the projective curve $E_j$, so
there exists $e\in \ZZ_{>0}$ which is divisible by $d$ such that $\mathcal O_X(-e\Delta)\otimes\mathcal O_{E_j}$ is very ample and $\overline B=\oplus_{m\ge 0}B_{me}$ is a finitely generated  $B_{0}$-algebra  which is generated  by its terms of the lowest positive degree $me$ (\cite[Theorem II.5.19 and Exercise II.5.14]{H}). Thus $\overline B$ is the coordinate ring of a projective embedding of the curve $E_j$ in a projective space over $B_0$,  determined by a $B_0$-basis of $\Gamma(E_j,\mathcal O_X(-e\Delta)\otimes\mathcal O_{E_j})$. Thus $\overline B$ has dimension two. Let $\overline A=\bigoplus_{m\ge 0}A_{me}$.



By (\ref{eqF1}), for $n\gg 0$, there exists $F\in A_{ne}$ such that $0\ne F$. The ring $\overline B_{(F)}$ of elements of degree zero in the localization $\overline B_F$ is such that $\mbox{Spec}(\overline B_{(F)})$ is the affine variety
 $E_j\setminus V(F)$, with maximal ideals in $\overline B_{(F)}$ corresponding to height one homogeneous prime ideals in 
 $\mbox{Proj}(\overline B)$ which do not contain $F$ (by \cite[Proposition II.2.5]{H}).
 Thus there exists a homogeneous height one prime ideal $Q=\oplus_{n> 0}Q_{ne}$ in $\overline B$ which does not contain $F$.

Let $P=\overline A\cap Q$, where $P=\oplus_{n> 0}P_{ne}$ with $P_{ne}=Q_{ne}\cap A_{ne}$. $\dim \overline B/Q=1$ implies that there exists $d\in \ZZ_{>0}$ such that $\dim_k(B_{ne}/Q_{ne})<d$ for all $n$  (by \cite[Theorem 4.1.3]{BH}).
Thus by (\ref{eqF1}) we have that $P\ne 0$. $P$ is not the graded maximal ideal $\oplus_{n\ge 0}A_{ne}$ of $\overline A$ since $F\not\in P$.

We have constructed a chain of distinct homogeneous prime ideals
$0\subset P\subset \bigoplus_{n>0}A_{ne}$ in $\overline A$ and thus 
$\overline A$ has dimension $\ge 2$. The extension $\overline A\rightarrow A$ is integral so $\dim A\ge 2$ by the going up theorem (\cite[Theorem 5.11]{AM}).
We have that $m_R\Gamma(X,\mathcal O_X(-\lceil n\Delta\rceil))\subset \Gamma(X,\mathcal O_X(-\lceil n\Delta\rceil-E_j))$ for all $n\ge 0$ by (\ref{eq80}). We thus have a surjection $R[\Delta]/m_RR[\Delta]\rightarrow A$ and so 
$\dim A\le \dim R[\Delta]/m_RR[\Delta]$. But $\dim R[\Delta]/m_RR[\Delta]\le 2$ by  \cite[Lemma 3.6]{CS}, so that  $\dim A=2$.

Now suppose that $(\Delta\cdot E_j)=0$. Let $B_n=\Gamma(E_j,\mathcal O_X(-\lceil n\Delta\rceil)\otimes \mathcal O_{E_j})$ and  $A_n$ be the natural image of $\Gamma(X,\mathcal O_X(-\lceil n\Delta\rceil))$ in $B_n$. We have that $A_0\cong R/m_R=k$ and $B_0$ is a finite field extension of $k$. Let  $A=\sum_{n\ge 0}A_nt^n$  where $t$ is an indeterminate. We have that $A\cong R[\Delta]/P_j$.

By the Riemann-Roch Theorem (\ref{eq44}) and Lemma \ref{Lemma25}, there exists $d>0$ such that $\dim_k(B_n)<d$ for all $n\in \NN$.

For $a\in \ZZ_{>0}$, define ${}_aA=\sum_{n\ge 0}{}_aA_nt^n$ to be the graded subring of $A$ defined by 
${}_aA=k[A_1t,A_2t^2,\ldots,A_at^a]$. The ring ${}_aA$ is a finitely generated graded $k$-algebra. For fixed $a$, there exists $e\in \ZZ_{>0}$ such that ${}_aA^{(e)}=\sum_{n\ge 0}{}_aA_{en}t^{en}$ is generated in degree $e$ 
(as follows from \cite[Proposition III.3.2 and Proposition III.3.3 on pages 158 and 159]{Bou}). Since ${}_aA$ is a finitely generated ${}_aA^{(e)}$-module, we have that $\dim {}_aA=\dim {}_aA^{(e)}$. Since $\dim_k {}_aA^{(e)}_n<d$ for all $n\in \NN$, we have that $\dim {}_aA\le 1$ for all $a\in \ZZ_{>0}$ by \cite[Theorem 4.1.3]{BH}. Suppose that $Q_0\subset Q_1\subset \cdots \subset Q_s$ is a chain of distinct prime ideals in $A$. Since $\cup_{a\ge 0}({}_aA)=A$,
 for all $a\gg0$,
$Q_0\cap {}_aA\subset Q_1\cap {}_aA\subset \cdots \subset Q_s\cap {}_aA$ is a chain of distinct prime ideals in $A$. Thus $\dim A\le 1$.
\end{proof}

\begin{Corollary}\label{Cor4} Suppose that $\Delta$ is an effective anti-nef $\QQ$-divisor on $X$. Then 
$$
\dim R[\Delta]/m_RR[\Delta]=2
$$
 if and only if there exists an exceptional prime divisor $E$ of $\pi$  such that  $(\Delta\cdot E)<0$
\end{Corollary}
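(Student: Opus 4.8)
The plan is to deduce this directly from the two preceding results: Proposition \ref{Prop80}, which identifies $\sqrt{m_RR[\Delta]}$ with $\cap_{i=1}^r P_i$, and Proposition \ref{Prop3}, which computes $\dim R[\Delta]/P_j$ in terms of the sign of $(\Delta\cdot E_j)$. Since $\dim R[\Delta]/m_RR[\Delta]=\dim R[\Delta]/\sqrt{m_RR[\Delta]}$ and $\sqrt{m_RR[\Delta]}=\cap_{i=1}^r P_i$, the minimal primes of $m_RR[\Delta]$ are among the $P_i$, so
$$
\dim R[\Delta]/m_RR[\Delta]=\max_{1\le i\le r}\dim R[\Delta]/P_i.
$$
Thus everything reduces to understanding each $\dim R[\Delta]/P_i$, which is exactly what Proposition \ref{Prop3} provides.

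For the ``if'' direction, suppose there is an exceptional prime divisor $E$ of $\pi$ with $(\Delta\cdot E)<0$; say $E=E_j$. By part 1) of Proposition \ref{Prop3}, $\dim R[\Delta]/P_j=2$, so the displayed maximum is at least $2$. On the other hand $\dim R[\Delta]/m_RR[\Delta]\le \dim R=2$ always holds (by \cite[Lemma 3.6]{CS}, which is already invoked in the proof of Proposition \ref{Prop3}), so equality $\dim R[\Delta]/m_RR[\Delta]=2$ follows.

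For the ``only if'' direction, suppose that no exceptional prime divisor $E$ satisfies $(\Delta\cdot E)<0$. Because $\Delta$ is anti-nef, $(\Delta\cdot E_i)\le 0$ for every $i$, so in fact $(\Delta\cdot E_i)=0$ for all $i$. Then part 2) of Proposition \ref{Prop3} gives $\dim R[\Delta]/P_i\le 1$ for every $i$, whence $\dim R[\Delta]/m_RR[\Delta]=\max_i\dim R[\Delta]/P_i\le 1<2$. This completes both implications. I do not expect any genuine obstacle here: all the real work (the intersection-theoretic computation of $\dim R[\Delta]/P_j$, the radical computation, and the boundedness of $h^1$ feeding into it) has already been carried out in Propositions \ref{Prop80} and \ref{Prop3}; the only point to be careful about is invoking the a priori bound $\dim R[\Delta]/m_RR[\Delta]\le 2$ to upgrade the inequality to an equality in the ``if'' case.
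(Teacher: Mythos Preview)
Your proof is correct and follows exactly the approach the paper intends: the paper's own proof is the single line ``This follows from Propositions \ref{Prop80} and \ref{Prop3},'' and you have simply written out the details of that deduction. One very minor remark: invoking the a priori bound $\dim R[\Delta]/m_RR[\Delta]\le 2$ in the ``if'' direction is not actually needed, since Proposition \ref{Prop3} already tells you every $\dim R[\Delta]/P_i$ is either $2$ or $\le 1$, so the maximum is exactly $2$; but the argument is fine as written.
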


\begin{proof} This follows from Propositions \ref{Prop80} and \ref{Prop3}.
\end{proof}



\section{Analytic spread of divisorial filtrations}\label{SecAS2}
Theorem \ref{Cor2} is a generalization to (not necessarily Noetherian) divisorial filtrations on a two dimensional normal local ring of a theorem of McAdam, for filtrations of powers of ideals,    in \cite{McA}   and  \cite[Theorem 5.4.6]{HS}. We recall the exact statement of McAdam's  theorem in  Theorem \ref{TheoremC4} of the introduction. The concept of a divisiorial filtration $\mathcal I(D)=\{I(nD)\}$ is defined in the introduction. 

\begin{Theorem}\label{Cor2} Let $R$ be a two dimensional normal excellent local ring. The following are equivalent for a $\QQ$-divisorial filtration $\mathcal I(D)$ on $R$.
\begin{enumerate}
\item[1)]
The analytic spread $\ell(\mathcal I(D))=\dim R[D]/m_RR[D]=2$.
\item[2)] $m_R\in \mbox{Ass}(R/I(nD))$ for some $n$.
\item[3)] There exists $n_0\in \ZZ_{>0}$ such that $m_R\in \mbox{Ass}(R/I(nD))$ for all $n\ge n_0$.
\end{enumerate}
\end{Theorem}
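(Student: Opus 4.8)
The plan is to reduce at once to the case of an effective anti-nef $\QQ$-divisor via the Zariski decomposition, and then to read the three equivalences off the results of Section \ref{SecAS}. First I would fix a projective resolution of singularities $\pi:X\rightarrow\mbox{Spec}(R)$ as in Section \ref{SecRes} on which the filtration $\mathcal I(D)$ is represented, so that (abusing notation) $D$ is an effective $\QQ$-divisor on $X$ with $I(nD)=\Gamma(X,\mathcal O_X(-\lceil nD\rceil))$ for all $n\in\NN$; since the representation property is independent of the choice of $X$, we may arrange by first blowing up $m_R$ that $\pi$ has at least one exceptional prime divisor, so that the standing hypotheses of Section \ref{SecRes} apply. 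Let $\Delta=D+B$ be the Zariski decomposition of $D$ furnished by Lemma \ref{Lemma10}: $\Delta$ is an effective anti-nef $\QQ$-divisor and $B$ is effective with exceptional support. By Lemma \ref{LemmaV2} we have $\Gamma(X,\mathcal O_X(-\lceil nD\rceil))=\Gamma(X,\mathcal O_X(-\lceil n\Delta\rceil))$ for every $n$, hence $I(nD)=I(n\Delta)$ for all $n$ and $R[D]=R[\Delta]$ as graded $R$-algebras; in particular $R[D]/m_RR[D]=R[\Delta]/m_RR[\Delta]$. So it suffices to prove the theorem with $\Delta$ in place of $D$.

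Second, Corollary \ref{Cor3} applied to $\Delta$ shows that statements 2) and 3) are each equivalent to the intersection-theoretic condition: there exists a prime exceptional divisor $E$ of $\pi$ with $(\Delta\cdot E)<0$. It thus remains to prove that statement 1), namely $\ell(\mathcal I(D))=\dim R[\Delta]/m_RR[\Delta]=2$, is likewise equivalent to the existence of such an $E$. But this is exactly Corollary \ref{Cor4}: by Proposition \ref{Prop80} one has $\sqrt{m_RR[\Delta]}=\bigcap_{i=1}^rP_i$, so that $\dim R[\Delta]/m_RR[\Delta]=\max_{1\le i\le r}\dim R[\Delta]/P_i$, and by Proposition \ref{Prop3} the right-hand side equals $2$ if some $(\Delta\cdot E_i)<0$ and is $\le 1$ otherwise (using also $\dim R[\Delta]/m_RR[\Delta]\le\dim R=2$ from \cite[Lemma 3.6]{CPS}, which is where the definition of $\ell$ as a dimension is reconciled with the bound). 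Combining, all three of 1), 2), 3) are equivalent to the single condition that $(\Delta\cdot E)<0$ for some exceptional $E$, which finishes the proof.

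The serious content of the argument lies entirely in the inputs: the asymptotic analysis of fixed components and of $h^1$ along exceptional curves (Propositions \ref{Prop20}, \ref{Prop21}, \ref{Cor22}, \ref{Prop22}), which underlies Corollary \ref{Cor3}, together with the radical computation of Proposition \ref{Prop80} and the intersection-theoretic dimension count of Proposition \ref{Prop3}. If one point in the final assembly deserves emphasis it is the passage $\dim R[\Delta]/m_RR[\Delta]=\max_j\dim R[\Delta]/P_j$, valid precisely because $\sqrt{m_RR[\Delta]}$ is a \emph{finite} intersection of the primes $P_j$; this is what lets the ``$m_R\in\mbox{Ass}$'' criterion (visibility of some $P_j$, i.e.\ some $(\Delta\cdot E_j)<0$) be matched with the dimension-two criterion. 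Beyond this, I do not expect a genuine obstacle at this stage — only the bookkeeping of checking that passing from $D$ to its anti-nef part $\Delta$ disturbs neither the Rees algebra nor the ideals $I(nD)$, which is immediate from Lemma \ref{LemmaV2}.
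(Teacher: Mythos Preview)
Your proposal is correct and follows essentially the same route as the paper: pass to a resolution on which $D$ is represented, replace $D$ by the anti-nef part $\Delta$ of its Zariski decomposition via Lemma \ref{LemmaV2}, and then invoke Corollary \ref{Cor3} for the equivalence of 2), 3) with the condition $(\Delta\cdot E)<0$ for some exceptional $E$, and Corollary \ref{Cor4} (i.e.\ Propositions \ref{Prop80} and \ref{Prop3}) for the equivalence of 1) with that same condition. Your added remarks about ensuring $\pi$ is not an isomorphism and about why the radical decomposition yields $\dim R[\Delta]/m_RR[\Delta]=\max_j\dim R[\Delta]/P_j$ are helpful elaborations but do not change the argument.
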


\begin{proof} Let $\pi:X\rightarrow \mbox{Spec}(R)$ be a resolution of singularities such $D=\sum_{i=1}^s a_iF_i$ for some prime divisors $F_i$ on $X$ and the exceptional divisors $E_1,\ldots, E_r$ of $\pi$ are nonsingular. Let
$\Delta=D+B$ be the Zariski decomposition of $D$ on $X$, so that $\mathcal I(D)=\mathcal I(\Delta)$ and $R[D]=R[\Delta]$ (by Lemma \ref{LemmaV2}). Then this theorem follows from Corollary \ref{Cor4} and \ref{Cor3}.
\end{proof}




\begin{Corollary} Let $R$ be a two dimensional normal excellent local ring and  $\mathcal I(D)$ be a $\QQ$-divisorial filtration on $R$.  Then $\dim R[D]/m_RR[D]\le 1$ if and only if there exist  height one prime ideals $Q_1,\ldots, Q_s$ in $R$ and $b_1,\ldots,b_r\in \QQ_{>0}$ such that  $I(nD)= Q_1^{(\lceil nb_1\rceil)}\cap \cdots \cap Q_s^{(\lceil nb_s\rceil)}$ for all $n\in\NN$. 
\end{Corollary}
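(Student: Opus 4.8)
The plan is to get the forward implication for free from Theorem \ref{Cor2}, and to prove the reverse implication by passing to a resolution of singularities, applying Corollary \ref{Cor4} to the anti-nef part $\Delta$ of the Zariski decomposition of $D$, and then using the negative definiteness of the exceptional intersection matrix to throw away the exceptional components of $\Delta$.

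For the ``if'' direction I would argue as follows. Suppose $I(nD)=Q_1^{(\lceil nb_1\rceil)}\cap\cdots\cap Q_s^{(\lceil nb_s\rceil)}$ for all $n$, with the $Q_i$ height one primes and $b_i\in\QQ_{>0}$. For $n\ge 1$ each symbolic power $Q_i^{(\lceil nb_i\rceil)}$ is $Q_i$-primary, so the inclusion $R/I(nD)\hookrightarrow\bigoplus_i R/Q_i^{(\lceil nb_i\rceil)}$ shows $\mbox{Ass}(R/I(nD))\subseteq\{Q_1,\ldots,Q_s\}$, and this set cannot contain $m_R$ because $\mbox{ht}(m_R)=2$. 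Hence $m_R\notin\mbox{Ass}(R/I(nD))$ for every $n$, and Theorem \ref{Cor2} then forces $\dim R[D]/m_RR[D]\le 1$.

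For the ``only if'' direction, assume $\dim R[D]/m_RR[D]\le 1$. I would fix a resolution $\pi:X\to\mbox{Spec}(R)$ with nonsingular exceptional primes $E_1,\dots,E_r$ on which $\mathcal I(D)$ is represented, and take $\Delta=D+B$ the Zariski decomposition of $D$, so that $\Delta$ is an effective anti-nef $\QQ$-divisor (Lemma \ref{Lemma10}) with $R[D]=R[\Delta]$ and $I(nD)=I(n\Delta)=\Gamma(X,\mathcal O_X(-\lceil n\Delta\rceil))$ for all $n$ (Lemma \ref{LemmaV2}). Writing $\Delta=\sum_{i=1}^s a_iF_i$ with the $F_i$ distinct and $a_i\in\QQ_{>0}$, Lemma \ref{RemarkAN1} gives that all $E_j$ occur among the $F_i$, so I may take $F_i=E_i$ for $i\le r$; then $F_{r+1},\dots,F_s$ are non-exceptional, with distinct height one centers $Q_i:=\pi(F_i)$. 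Since $\dim R[\Delta]/m_RR[\Delta]\le 1$, Corollary \ref{Cor4} together with anti-nefness of $\Delta$ forces $(\Delta\cdot E_j)=0$ for all $j\le r$. (If $s=r$ then $\Delta^2=0$, so $\Delta=0$, hence $D=0$ and $I(nD)=R$ for all $n$, and the statement holds with the empty family; so I may assume $s>r$.)

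The core step is then to show that the exceptional components of $\Delta$ are redundant, i.e. that $\Gamma(X,\mathcal O_X(-\lceil n\Delta\rceil))=\Gamma(X,\mathcal O_X(-\lceil n\Delta'\rceil))$ for all $n$, where $\Delta'=\sum_{i=r+1}^s a_iF_i$ is the non-exceptional part of $\Delta$. To see this, let $\Delta''=\Delta'+B'$ be the Zariski decomposition of $\Delta'$. Since $\Delta$ is effective and anti-nef and $\Delta-\Delta'$ is effective with exceptional support, minimality of the Zariski decomposition (Lemma \ref{Lemma10}(3), Remark \ref{Remarkv1}) gives $\Delta''\le\Delta$, so $\Delta-\Delta''$ is effective with exceptional support; and for $j\le r$,
$$
((\Delta-\Delta'')\cdot E_j)=(\Delta\cdot E_j)-(\Delta''\cdot E_j)=-(\Delta''\cdot E_j)\ge 0
$$
because $\Delta''$ is anti-nef. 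Hence $(\Delta-\Delta'')^2\ge 0$, and negative definiteness of the exceptional intersection matrix forces $\Delta''=\Delta$. Lemma \ref{LemmaV2} applied to $\Delta'$ then yields $\Gamma(X,\mathcal O_X(-\lceil n\Delta'\rceil))=\Gamma(X,\mathcal O_X(-\lceil n\Delta''\rceil))=\Gamma(X,\mathcal O_X(-\lceil n\Delta\rceil))=I(nD)$. Finally, for $i>r$ the morphism $\pi$ is an isomorphism near the generic point of $F_i$, so $\mathcal O_{X,F_i}=R_{Q_i}$ and $I(\nu_{F_i})_{\lceil na_i\rceil}=Q_i^{(\lceil na_i\rceil)}$; thus $I(nD)=\bigcap_{i=r+1}^s Q_i^{(\lceil na_i\rceil)}$, which after relabeling and setting $b_i=a_{r+i}\in\QQ_{>0}$ is exactly the asserted formula.

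I expect the main obstacle to be precisely this redundancy of the exceptional part of $\Delta$ in $I(n\Delta)=\bigcap_i I(\nu_{F_i})_{na_i}$: Corollary \ref{Cor1} removes one exceptional divisor from the \emph{full} intersection but does not obviously iterate, so the right move is to compare $\Delta$ with the Zariski decomposition of its non-exceptional part and invoke negative definiteness; the rest is bookkeeping on top of Theorem \ref{Cor2}, Corollary \ref{Cor4}, and the basic properties of the Zariski decomposition.
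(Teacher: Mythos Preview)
Your proposal is correct. For the ``if'' direction you follow the paper exactly: the symbolic-power form forces $m_R\notin\mbox{Ass}(R/I(nD))$ for all $n$, and Theorem~\ref{Cor2} then gives $\dim R[D]/m_RR[D]\le 1$.

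For the ``only if'' direction the paper takes a much shorter path than you do. It simply invokes Theorem~\ref{Cor2} once more to conclude $m_R\notin\mbox{Ass}(R/I(nD))$ for every $n$, and then observes that $I(nD)=\bigcap_i I(\nu_i)_{\lceil na_i\rceil}$ is already a (possibly redundant) primary decomposition: each $I(\nu_i)_m$ is primary for the center of $\nu_i$, which in dimension two is either $m_R$ or a height one prime $Q_i$ (in the latter case $V_{\nu_i}=R_{Q_i}$, so $I(\nu_i)_m=Q_i^{(m)}$). Since $m_R$ is not an associated prime, the $m_R$-primary components drop out by elementary primary decomposition, leaving exactly the asserted intersection of symbolic powers. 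Your route---using Corollary~\ref{Cor4} to obtain $(\Delta\cdot E_j)=0$ for all $j$, and then comparing $\Delta$ with the Zariski decomposition $\Delta''$ of its non-exceptional part $\Delta'$ via minimality and negative definiteness to force $\Delta=\Delta''$---is genuinely different and longer, but it buys you the pleasant geometric fact that $\Delta$ is precisely the Zariski decomposition of $\Delta'$. The paper's approach trades this insight for a one-line proof that stays entirely at the level of primary decomposition and Theorem~\ref{Cor2}.
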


\begin{proof}  We have that $I(nD)= Q_1^{(\lceil nb_1\rceil)}\cap \cdots \cap Q_s^{(\lceil nb_s\rceil)}$ for all $n\in\NN$ if and only if  $m_R\not\in \mbox{Ass}(R/I(nD))$ for all $n$ which holds if and only if $\dim R[D]/m_RR[D]\le 1$ by Corollary \ref{Cor2}.
\end{proof}

\begin{Example}\label{Ex1}  There exists  a $\QQ$-divisorial filtration $\mathcal I(D)$ on a two dimensional normal excellent local ring $R$ such that  the analytic spread $\ell(\mathcal I(D))=0$ and height 
$$
{\rm ht}(\mathcal I(D))={\rm ht}(I(D))=1,
$$
 giving an example where ${\rm ht}(\mathcal I(D))>\ell(\mathcal I(D))$.
The Rees algebra of the example is a Non Noetherian symbolic algebra  $R[D]=\sum_{n\ge0}Q_1^{(n)}\cap Q_2^{(n)}\cap Q_3^{(n)}$ where $Q_1, Q_2, Q_3$ are height one prime ideals in $R$.
 \end{Example}

\begin{proof} Let $k$ be an algebraically closed field and $F$ be an irreducible cubic form in the polynomial ring $k[x,y,z]$ such that $E=\mbox{Proj}(k[x,y,z]/(F))$ is an elliptic curve. Let $R=k[[x,y,z]]/(F)$, a complete, normal excellent local ring of dimension two with maximal ideal $m_R=(x,y,z)$. Let $\pi:X\rightarrow \mbox{Spec}(R)$ be the blow up of the maximal ideal $m_R$ of $R$. $X$ is nonsingular with $\pi^{-1}(m_R)\cong E$,
$m_R\mathcal O_X=\mathcal O_X(-E)$, $\mathcal O_X(-E)\otimes \mathcal O_E\cong \mathcal O_E(1)$ and $(E^2)=-3$. We have that $\mathcal O_X(-E)\otimes\mathcal O_E\cong\mathcal O_E(q_1+q_2+q_3)$ for some closed points $q_1,q_2,q_3\in E$. Let $p_1,p_2,p_3 \in E$ be distinct closed points on $E$ such that the degree 0 invertible sheaf $\mathcal L=\mathcal O_E(q_1+q_2+q_3-p_1-p_2-p_3)$ has infinite order in  $\mbox{Pic}^0(X)$. Then $h^0(\mathcal L^n)=0$ for all $n\in \ZZ$.
In each regular local ring $\mathcal O_{X,p_i}$, let $u_i,v_i$ be a regular system of parameters such that $u_i=0$ is a local equation of $E$ at $p_i$. Let $F_i$ be the Zariski closure of $v_i=0$ in $X$, which is an integral curve. Let $\pi(F_i)=Q_i\in \mbox{Spec}(R)$. $R/Q_i$ is Henselian since it is complete, so by \cite[Theorem 4.2 page 32]{Mil}, we have that $E$ intersects the integral curve $F_i$ only at the point $p_i$. $F_i$ intersects $E$ transversally at $p_i$ so that $(E\cdot F_i)=1$.
Let $D=F_1+F_2+F_3$. The Zariski decomposition of $D$ is $\Delta=D+E$. We have that $\mathcal O_X(-n\Delta)\otimes\mathcal O_E\cong \mathcal L^n$ for all $n$. Thus $\Gamma(X,\mathcal O_X(-n\Delta-E))=\Gamma(X,\mathcal O_X(-n\Delta))$ for all $n\in \ZZ_{>0}$, and so by Proposition \ref{Prop80}, 
$$
R[\Delta]/\sqrt{m_RR[\Delta]}=\bigoplus_{n\ge 0}\Gamma(X,\mathcal O_X(-n\Delta))/\Gamma(X,\mathcal O_X(-n\Delta-E))
=R/m_R=k.
$$
 Thus
$$
\dim R[\Delta]/m_RR[\Delta]=\dim R[\Delta]/\sqrt{m_RR[\Delta]}=0.
$$
Since $0=\ell(\mathcal I(D))<1=\mbox{ht}(\mathcal I(D))$, we have that $R[D]$ is Non Noetherian (by \cite[Proposition 3.7]{CPS}).
\end{proof}

\section{The Hilbert function of $R[D]/m_RR[D]$}\label{SecHilb}

\begin{Theorem}\label{HilbThm} Suppose that $R$ is a two dimensional normal excellent local ring and $\mathcal I(D)$ is a $\QQ$-divisorial filtration on $R$.
Then there exist a nonnegative rational number $\alpha$ and a bounded function $\sigma:\NN\rightarrow\QQ$ such that 
$$
\ell_R(I(nD)/m_RI(nD))=\ell_R((R[D]/m_RR[D])_n)=n\alpha+\sigma(n)
$$
for $n\in \NN$.
The constant $\alpha$ is positive if and only if $\dim(R[D]/m_RR[D])=2$.
\end{Theorem}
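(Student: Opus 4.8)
The plan is to move to a resolution and turn the statement into an intersection-theoretic estimate. First I would choose $\pi:X\to\mbox{Spec}(R)$ as in Section \ref{SecRes}, with the additional property that $m_R\mathcal O_X$ is invertible; write $m_R\mathcal O_X=\mathcal O_X(-Z)$ with $Z=\sum_{j=1}^r z_jE_j$ an effective integral divisor having exceptional support. One may always blow up further to arrange this, and every $z_j>0$ by Lemma \ref{RemarkAN1}, since $-Z$ is globally generated, hence nef, and $Z\neq 0$. Replacing $D$ by the anti-nef part $\Delta$ of its Zariski decomposition (Lemma \ref{Lemma10}) we have $I(nD)=I(n\Delta)$ and $R[D]=R[\Delta]$ (Lemma \ref{LemmaV2}), so it suffices to treat an effective anti-nef $\QQ$-divisor $\Delta$. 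Since $I(n\Delta)/m_RI(n\Delta)$ is a $k$-vector space, $\ell_R$ equals $\dim_k$. I will set
$$
\alpha:=-(\Delta\cdot Z)=\sum_{j=1}^r z_j\bigl(-(\Delta\cdot E_j)\bigr)\in\QQ_{\ge 0},
$$
which vanishes exactly when $(\Delta\cdot E_j)=0$ for all $j$, that is (by Corollary \ref{Cor4}) exactly when $\dim R[\Delta]/m_RR[\Delta]\le 1$; this gives the last assertion once the Hilbert formula is proved.

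I would then insert the intermediate ideal $m_RI(n\Delta)\subseteq J_n:=\Gamma(X,\mathcal O_X(-\lceil n\Delta\rceil-Z))\subseteq I(n\Delta)$ and control the two successive quotients separately. For $I(n\Delta)/J_n$, take global sections in
$$
0\to \mathcal O_X(-\lceil n\Delta\rceil-Z)\to\mathcal O_X(-\lceil n\Delta\rceil)\to\mathcal O_X(-\lceil n\Delta\rceil)\otimes\mathcal O_Z\to 0.
$$
Then $\dim_k I(n\Delta)/J_n$ equals $\dim_kH^0(Z,\mathcal O_X(-\lceil n\Delta\rceil)\otimes\mathcal O_Z)$ minus a term that is at most $h^1(X,\mathcal O_X(-\lceil n\Delta\rceil-Z))$, which is bounded in $n$ by Corollary \ref{Cor23} (fix $d$ with $d\Delta$ integral, write $n=md+s$ with $0\le s<d$, and apply the corollary to the anti-nef integral divisor $d\Delta$ and to each sheaf $\mathcal O_X(-\lceil s\Delta\rceil-Z)$). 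On the projective $k$-scheme $Z$ of dimension one, $\chi(Z,\mathcal O_X(-\lceil n\Delta\rceil)\otimes\mathcal O_Z)=(-\lceil n\Delta\rceil\cdot Z)+\chi(\mathcal O_Z)$ (filter $\mathcal O_Z$ along the $E_j$ and use Riemann--Roch on each $E_j$ as in Section \ref{SecRR}), and since $\lceil n\Delta\rceil-n\Delta$ has bounded coefficients, $(-\lceil n\Delta\rceil\cdot Z)=n\alpha+(\mbox{bounded})$. Moreover $H^1(Z,\mathcal O_X(-\lceil n\Delta\rceil)\otimes\mathcal O_Z)$ is a quotient of $H^1(X,\mathcal O_X(-\lceil n\Delta\rceil))$ because $H^2(X,-)=0$ (Remark \ref{RemarkR3}), and the latter is bounded by Corollary \ref{Cor23} again. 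Hence $\dim_k I(n\Delta)/J_n=n\alpha+(\mbox{bounded})$.

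For $J_n/m_RI(n\Delta)$, fix generators $g_1,\dots,g_t$ of $m_R$, regarded as global sections of $\mathcal O_X(-Z)=m_R\mathcal O_X$; they define a surjection $\mathcal O_X^{\,t}\twoheadrightarrow\mathcal O_X(-Z)$ with coherent kernel $\mathcal G$. Tensoring with the invertible sheaf $\mathcal O_X(-\lceil n\Delta\rceil)$ and taking global sections, the image of $\Gamma(X,\mathcal O_X(-\lceil n\Delta\rceil))^{\,t}$ in $\Gamma(X,\mathcal O_X(-\lceil n\Delta\rceil-Z))=J_n$ is exactly $\sum_i g_iI(n\Delta)=m_RI(n\Delta)$, so
$$
\dim_k J_n/m_RI(n\Delta)\le h^1\bigl(X,\mathcal G\otimes\mathcal O_X(-\lceil n\Delta\rceil)\bigr),
$$
and the right side is bounded in $n$ by Corollary \ref{Cor23} applied to $d\Delta$ and the coherent sheaves $\mathcal G\otimes\mathcal O_X(-\lceil s\Delta\rceil)$, $0\le s<d$. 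Adding the two estimates yields $\ell_R(I(nD)/m_RI(nD))=n\alpha+\sigma(n)$ with $\sigma$ bounded, and $\alpha>0$ iff $\dim R[D]/m_RR[D]=2$ by the remark in the first paragraph. The step I expect to be the main obstacle is this second quotient: one must set up the presentation of $m_R\mathcal O_X$ so that, after twisting by $\mathcal O_X(-\lceil n\Delta\rceil)$ and taking sections, the image is precisely $m_RI(n\Delta)$ (and not something larger) and the cokernel is genuinely measured by $H^1$ of the twisted syzygy sheaf $\mathcal G$; once that bookkeeping is correct, the boundedness inputs are immediate from Corollary \ref{Cor23}.
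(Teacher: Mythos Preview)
Your proposal is correct and follows essentially the same approach as the paper: both insert the intermediate ideal $J_n=\Gamma(X,\mathcal O_X(-\lceil n\Delta\rceil-Z))$, bound $J_n/m_RI(n\Delta)$ via the presentation $0\to\mathcal G\to\mathcal O_X^{\,t}\to m_R\mathcal O_X\to 0$ twisted by $\mathcal O_X(-\lceil n\Delta\rceil)$ together with Corollary \ref{Cor23}, and compute $I(n\Delta)/J_n$ by restricting to the exceptional scheme $Z$ and invoking Riemann--Roch on its components, arriving at $\alpha=-(\Delta\cdot Z)$. The only cosmetic difference is that the paper filters $\mathcal O_Z$ explicitly to estimate $h^0$ term by term, whereas you compute $h^0=\chi+h^1$ and bound $h^1(Z,\cdot)$ as a quotient of $H^1(X,\cdot)$ using $H^2(X,-)=0$; both routes are valid and yield the same conclusion.
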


The function $\sigma$ is bounded from both above and below.  The proof gives an explicit calculation of the constant $\alpha$ in terms of the intersection theory of a suitable resolution of singularities in equation (\ref{Hilb9}).
The constant $\alpha$ is a nonnegative integer if $\Delta$ is an integral divisor in the  Zariski decomposition $D=\Delta+B$.

\begin{proof} There exists a resolution of singularities  $\pi:X\rightarrow \mbox{Spec}(R)$  such that $D$ is an effective $\QQ$-divisor on $X$, $m_R\mathcal O_X$ is invertible and  the  prime exceptional divisors $E_1,\ldots, E_r$ of $X$ are all nonsingular. Let $G$ be the effective exceptional divisor such that $m_R\mathcal O_X=\mathcal O_X(-G)$.
Let $\Delta=D+B$ be the Zariski decomposition of $D$ on $X$. There exists $d\in \ZZ_{>0}$ such that $d\Delta$ is an integral divisor.

Suppose that the ideal $m_R$ is generated by $f_1,\ldots,f_b$. We have an induced short exact sequence of coherent sheaves on $X$
$$
0\rightarrow \mathcal K\rightarrow \mathcal O_X^b\rightarrow m_R\mathcal O_X\rightarrow 0.
$$
Tensoring with $\mathcal O_X(-\lceil n\Delta\rceil)$ and taking global sections, we have short exact sequences
$$
0\rightarrow m_R\Gamma(X,\mathcal O_X(-\lceil  n\Delta\rceil)\rightarrow \Gamma(X,\mathcal O_X(-\lceil n\Delta\rceil-G))\rightarrow H^1(X,\mathcal K\otimes \mathcal O_X(-\lceil n\Delta\rceil)).
$$

Thus there exists $c_1\in \ZZ_{>0}$ such that
\begin{equation}\label{Hilb8}
\ell_R(\Gamma(X,\mathcal O_X(-\lceil n\Delta\rceil-G))/m_R\Gamma(X,\mathcal O_X(-\lceil  n\Delta\rceil))\le c_1
\end{equation}
for all $n\in \NN$ by Corollary \ref{Cor23}, applied to the effective anti-nef divisor $d\Delta$ and the coherent sheaves $\mathcal F=\mathcal K\otimes \mathcal O_X(-\lceil s\Delta\rceil)$ for  $0\le s<d$. From the short exact sequences
$$
0\rightarrow \mathcal O_X(-\lceil n\Delta\rceil-G)\rightarrow \mathcal O_X(-\lceil n\Delta\rceil) \rightarrow \mathcal O_X(-\lceil n\Delta\rceil)\otimes\mathcal O_G\rightarrow 0
$$
we have inclusions for $n\in \NN$
$$
\Gamma(X,\mathcal O_X(-\lceil n\Delta\rceil))/\Gamma(X,\mathcal O_X(-\lceil n\Delta\rceil-G))\rightarrow \Gamma(G,\mathcal O_X(-\lceil n\Delta\rceil)\otimes\mathcal O_G)
$$
and by Corollary \ref{Cor23}, there exists $c_2\in \ZZ_{>0}$ such that
\begin{equation}\label{Hilb7}
|\ell_R(\Gamma(G,\mathcal O_X(-\lceil n\Delta\rceil)\otimes\mathcal O_G))
-\ell_R(\Gamma(X,\mathcal O)X(-\lceil n\Delta\rceil))/\Gamma(X,\mathcal O_X(-\lceil n\Delta\rceil)))|
\le c_2.
\end{equation}
 We are reduced to computing
$h^0(\mathcal O_X(-\lceil n\Delta\rceil)\otimes\mathcal O_G)$ for $n\in \NN$.
Write $G=\sum_{i=1}^r a_iE_i$ with $a_i\in \ZZ_{>0}$.


Let $e=\sum_{i=1}^ra_i$. There exists a function $\tau:\{1,\ldots,e\}\rightarrow \{1,\ldots,r\}$ such that letting $C_1=E_{\tau(1)}$ and $C_{j+1}=C_j+E_{\tau(j+1)}$ for $1\le j<e$, we have that $C_e=G$.
We have short exact sequences
\begin{equation}\label{Hilb5}
0\rightarrow \mathcal O_X(-C_j)\otimes\mathcal O_{E_{\tau(j+1)}}\rightarrow \mathcal O_{C_{j+1}}\rightarrow \mathcal O_{C_j}\rightarrow 0
\end{equation}
for $1\le j<e$.
The cohomology groups $h^1(\mathcal O_X(-\lceil n\Delta\rceil -mE_j)\otimes\mathcal O_{E_{\tau(j+1)}})$
 are bounded for $1\le j<e$ and $n\in \NN$ by Lemma \ref{Lemma25}. Let 
$$
f=\max\{h^1(\mathcal O_X(-\lceil n\Delta\rceil -mE_j)\otimes\mathcal O_{E_{\tau(j+1)}})     \mid 1\le j<e\mbox{ and }n\in \NN\}.
$$
Tensoring the sequences (\ref{Hilb5}) with $\mathcal O_X(-\lceil n\Delta \rceil)$ and taking cohomology, we find that 
\begin{equation}\label{Hilb2}
|h^0(\mathcal O_X(-\lceil n\Delta\rceil)\otimes \mathcal O_{C_{j+1}})
-h^0(\mathcal O_X(-\lceil n\Delta\rceil)\otimes \mathcal O_{C_j})-h^0(\mathcal O_X(-\lceil n\Delta\rceil -C_j)\otimes\mathcal O_{E_{\tau(j+1)}})|\le f
\end{equation}
for $1\le j<e$ and $n\in \NN$.
Setting $C_0=0$, we have that there exists $\lambda\in \ZZ_{>0}$ such that
\begin{equation}\label{Hilb4}
|h^0(X,\mathcal O_X(-\lceil n\Delta\rceil)\otimes\mathcal O_G)-\sum_{i=0}^{e-1}h^0(X,\mathcal O_X(-\lceil n\Delta\rceil-C_i)\otimes\mathcal O_{E_{\tau(i+1)}})|<\lambda
\end{equation}
for all $n\in \NN$.
Writing $n=md+s$ with $0\le s<d$, we have
$$
h^0(\mathcal O_X(-\lceil n\Delta\rceil-C_j)\otimes \mathcal O_{E_{\tau(j+1)}})
=h^0(\mathcal O_X(-md\Delta-\lceil s\Delta\rceil-C_j)\otimes\mathcal O_{E_{\tau(j+1)}}).
$$
By Lemma \ref{Lemma25} and the Riemann-Roch theorem (\ref{eq44}), there exists $g\in \ZZ_{>0}$ such that
\begin{equation}\label{Hilb3}
|h^0(\mathcal O_X(-md\Delta-\lceil s\Delta\rceil -C_j)\otimes\mathcal O_{E_{\tau(j+1)}})-md(-\Delta\cdot E_{\tau(j+1)})| \le g
\end{equation}
for $1\le j <e$ and $m\in \NN$. Thus the theorem holds with 
\begin{equation}\label{Hilb9}
\alpha=(-\Delta\cdot G).
\end{equation}
\end{proof}

\end{document}